\documentclass[11pt]{amsart}
\usepackage{amsmath, amsthm, amscd, amsfonts,}
%\renewcommand{\baselinestretch}{1.25}
%\usepackage[all]{xy}
%\input xy
%\xyoption{all}

 %\def\affilfont{\normalfont\fontsize{10}{12}\selectfont\centering}%\itshape%SHK 24/2
%\def\affil#1{\par\vskip12pt{\affilfont#1\par}\vskip15pt}

 \newtheorem{theorem}{Theorem}[section]
\newtheorem{proposition}[theorem]{Proposition}

\newtheorem{corollary}[theorem]{Corollary}

\newtheorem{lemma}[theorem]{Lemma}
\theoremstyle{definition}
\newtheorem{definition}[theorem]{Definition}
\newtheorem{example}[theorem]{Example}
\newtheorem{counterexample}[theorem]{Counterexample}

\begin{document}

\title[When a quotient of a distributive lattice is a boolean algebra]
{when a quotient of a distributive lattice is a boolean algebra}

 \author[H.Barzegar ]{H.Barzegar }

 \address{Department of Mathematics, Tafresh University, Tafresh 39518-79611, Iran.}

 \email{h56bar@tafreshu.ac.ir}

 %\author[M.M. Ebrahimi]{}

 \subjclass[2010]{03G10, 06Dx, 03G05}

 \keywords{Distributive lattice, Boolean algebra, Congruence, Ideal, Filter }

\begin{abstract}
In this article, we introduce a lattice congruence with respect to a nonempty ideal $I$ of a distributive lattice $L$ and a derivation $d$ on $L$ denoted by $\theta_I^d$.
 We investigate some necessary and sufficient conditions for the
quotient algebra $L/\theta_I^d$ to become a Boolean algebra.
\end{abstract}

\maketitle
\section{Introduction and Preliminaries}
In Calculus a derivation is a linear operation $d$ with an
additional property $d(f.g)=d(f)g+f.d(g)$. Based on this property,
several authors generalize derivation in some other branches of
mathematics. First, the notion of derivation had studied in rings
and near-rings \cite{Bell, Posner}. After that some authors
applied the notion of derivation in other branches of mathematics,
for example,  Jun and Xin \cite{Jun} in  BCI-algebras and
\cite{Ferrari, alshehri, Xin1, Xin2, Szasz} in lattices. In
\cite{Xin2}, Xin et al, for lattices with a greatest element,
modular lattices, and distributive lattices  gave some equivalent
conditions, under which a derivation is isotone. They
characterized modular lattices and distributive lattices by
isotone derivations. Also Xin  answered to some other questions
about the relations among derivations, ideals, and fixed sets  in
\cite{Xin1}.

 Lattices and Boolean algebras play a significant  structural role in computer
science and logic as well.  Recall that a Boolean algebra is a bounded, complemented
distributive lattice. So Boolean algebras have
a very closed relation to lattices.  One of the common subject in all kinds of algebras are congruences.
%The study of congruence relations on lattices had became
%a special interest to many authors. In some papers authors studied an inter-relation between ideals and congruence relations in a lattice.
The study of congruence relations on lattices and an inter-relation between
ideals and congruences in a lattice had became
a special interest to many authors, for example \cite{adam, gratzer1, luo}.

The main result of this manuscript is to obtain a necessary and sufficient condition in which the quotient lattice $L/\theta$ is a Boolean algebra. Let us explain and motivate what we intend to do in this article. In \cite{sambasiva}, two types of congruences are introduced in a distributive
lattice, both are defined in terms of derivations.  We found some mistakes in that paper and  our attempt are led to the paper \cite{Erratum}. After that, we are interested to generalize the work
 on a distributive lattice with a nonempty ideal. To this end, we first recall some ingredients needed in the sequel very briefly in this section. For more information see, for example, \cite{birkhoff, Ferrari, Xin2}.\\
 Throughout the paper $L$ stands for a distributive lattice.
%A partial order on $L$ is defined as follow: $a\leq b$ if and only if $a \wedge b=a($ equivalently $ a\vee b=b)$.
A least element, so called the bottom element, of a distributive
lattice, if exists, is denoted by $\bot_L({\rm or} \bot)$ and a
greatest one, so called the top element, is denoted by $\top_L({\rm
or} \top)$. In which case $L$ is called a bounded lattice.
%The lattice L is said to be bounded if there are elements $\bot$ and $\top$ such that
%$\bot \wedge p = \bot$ and $p\vee \top=\top$ for all $p \in L$.
By a lattice map (or homomorphism), we mean a map $f:A\rightarrow B$ between two lattices which preserves binary operations $\vee$ and $\wedge$.
Recall that a non-empty subset $I$ of $L$ is called an {\it ideal (filter)} of $L$ if $a\vee b \in A\ (a \wedge b \in A)$ and $a \wedge x \in A\ (a\vee x \in A)$ whenever $a, b \in A$ and $x \in L$. An equivalence relation $\theta$ defined on $L$ is said to be a lattice congruence on $L$
if it satisfies the following conditions, $a\theta b$ implies $(a\vee c)\theta (b\vee c)$ and $(a\wedge c)\theta (b\wedge c)$, for all $a,b,c\in L$.
 \begin{definition}\label{def1}\cite{Ferrari}
For a distributive lattice $L$, a function $d : L\rightarrow L$ is called a derivation
on $L$, if for all $x, y \in L$:

 ${\rm (i)}$ $d(x\wedge y) = (d(x)\wedge y) \vee (x\wedge d(y))$.

 ${\rm (ii)}$ $d(x\vee y) = d(x)\vee d(y)$.
\end{definition}

 In \cite[Th. 3.21]{Xin2} was shown that the condition (i) can be simplified in the following way which we use it through the paper from now on.
\begin{lemma}\label{derivation}\cite{Xin2}
If $L$ is a distributive lattice, then $d:L\rightarrow L$ is a derivation if and only if the following conditions hold:

 ${\rm (i)}$ $d(x\wedge y) = d(x)\wedge y = x\wedge d(y)$.

${\rm (ii)}$ $d(x\vee y) = d(x)\vee d(y)$.
\end{lemma}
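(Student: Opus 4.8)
The plan is to handle the two implications separately; the substantive one is showing that a derivation in the sense of Definition \ref{def1} satisfies the sharper identity (i).

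\emph{From Definition \ref{def1} to (i).} I would first extract two structural facts. Putting $y=x$ in Definition \ref{def1}(i) gives $d(x)=d(x\wedge x)=(d(x)\wedge x)\vee(x\wedge d(x))=d(x)\wedge x$, so $d(x)\le x$ for every $x\in L$. Next, if $x\le y$ then $x\vee y=y$, so Definition \ref{def1}(ii) yields $d(y)=d(x\vee y)=d(x)\vee d(y)$, i.e.\ $d(x)\le d(y)$; thus $d$ is isotone. Now fix $x,y\in L$. Since $x\wedge y\le x$, isotonicity gives $d(x\wedge y)\le d(x)$, and since $d(x\wedge y)\le x\wedge y\le y$ we get $d(x\wedge y)\le d(x)\wedge y$. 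Conversely, Definition \ref{def1}(i) gives $d(x\wedge y)=(d(x)\wedge y)\vee(x\wedge d(y))\ge d(x)\wedge y$. Hence $d(x\wedge y)=d(x)\wedge y$, and applying this with the roles of $x$ and $y$ interchanged (using $x\wedge y=y\wedge x$) gives $d(x\wedge y)=x\wedge d(y)$ as well. Condition (ii) of the lemma is literally Definition \ref{def1}(ii), so nothing more is required.

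\emph{From (i) and (ii) back to Definition \ref{def1}.} Here (ii) is immediate. For Definition \ref{def1}(i), observe that (i) of the lemma says $d(x)\wedge y=d(x\wedge y)=x\wedge d(y)$, so $(d(x)\wedge y)\vee(x\wedge d(y))=d(x\wedge y)\vee d(x\wedge y)=d(x\wedge y)$, which is exactly the identity we need.

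The only place that needs a little care is the forward direction: instead of trying to manipulate the join $(d(x)\wedge y)\vee(x\wedge d(y))$ head-on, one should first isolate the two facts $d(x)\le x$ (from the diagonal case $y=x$ of Definition \ref{def1}(i)) and isotonicity of $d$ (from Definition \ref{def1}(ii)), after which comparing $d(x\wedge y)$ with $d(x)\wedge y$ from both sides is a short order computation in the distributive lattice $L$. Everything else is routine.
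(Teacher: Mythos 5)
Your proof is correct and complete. The paper itself gives no argument for this lemma --- it simply cites \cite[Th.\ 3.21]{Xin2} --- so there is no in-text proof to compare against; your write-up supplies a self-contained verification. Both directions check out: the diagonal substitution $y=x$ in Definition \ref{def1}(i) does give $d(x)=d(x)\wedge x$, hence $d(x)\le x$; condition (ii) of the definition does give isotonicity; and the two inequalities $d(x\wedge y)\le d(x)\wedge y$ (from isotonicity plus $d\le\mathrm{id}$) and $d(x\wedge y)=(d(x)\wedge y)\vee(x\wedge d(y))\ge d(x)\wedge y$ pin down the equality, with the symmetric case and the converse being immediate. One observation worth making: your argument never invokes distributivity of $L$. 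That is because you lean on condition (ii) of Definition \ref{def1} (the join rule) to get isotonicity; in the source \cite{Xin2} a ``derivation'' is defined by the meet identity alone, and there the distributivity hypothesis does real work in establishing the simplified form. Since this paper's Definition \ref{def1}, following \cite{Ferrari}, bundles both conditions into the definition, your route is legitimate and in fact proves the equivalence for arbitrary lattices, which is slightly stronger than what is stated.
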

One can find the proof of the following lemma in \cite{Ferrari} and \cite{Xin1} which we also need to proceed.

 \begin{lemma}\label{property-derivation}
Let $d:L\rightarrow L$ be a derivation and $x,y\in L$.

 ${\rm (i)}$ If $L$ has a bottom element $\bot$, then $d(\bot)=\bot$.

 ${\rm (ii)}$ $d(x)\leq x$.

 ${\rm (iii)}$ $d(d(x))=d(x)$.

 ${\rm (iv)}$ If $x\leq y$, then $d(x)\leq d(y)$.

 ${\rm (v)}$ If $I$ is an ideal of $L$, then $d(I)\subseteq I$.

 ${\rm (vi)}$ If $L$ has a top element $\top$, then $d(x)=x\wedge d(\top)$.

 ${\rm (vii)}$ Let $L$ have a top element $\top$. If $x\leq d(\top)$, then $d(x)=x$ and if $x\geq d(\top)$, then $d(x)=d(\top)$.
\end{lemma}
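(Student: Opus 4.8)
The plan is to derive all seven items directly from the simplified characterization in Lemma \ref{derivation}, using essentially nothing beyond the absorption law (that $a\wedge b=a$ holds exactly when $a\le b$) and the fact that an ideal of $L$ is closed under meets with arbitrary elements of $L$. No further appeal to distributivity is needed, since it is already absorbed into Lemma \ref{derivation}.

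First I would establish the order-theoretic backbone. For (ii), write $x=x\wedge x$ and apply Lemma \ref{derivation}(i): $d(x)=d(x\wedge x)=d(x)\wedge x$, whence $d(x)\le x$. For (iv), if $x\le y$ then $x\wedge y=x$, so $d(x)=d(x\wedge y)=x\wedge d(y)\le d(y)$, using the second equality in Lemma \ref{derivation}(i). Item (i) is then the instance $x=\bot$: from (ii) we get $d(\bot)\le\bot$, hence $d(\bot)=\bot$ (or directly, $d(\bot)=d(\bot\wedge\bot)=d(\bot)\wedge\bot=\bot$).

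Next, the idempotency (iii). By (ii) we have $d(x)\le x$, so $d(x)\wedge x=d(x)$. Applying $d$ to this identity and invoking the form $d(a\wedge b)=a\wedge d(b)$ of Lemma \ref{derivation}(i) with $a=d(x)$ and $b=x$ yields $d(d(x))=d(d(x)\wedge x)=d(x)\wedge d(x)=d(x)$. This is the one spot where a little care is required: one must select the right one of the two equal expressions for $d(a\wedge b)$ so that the computation closes instead of looping back to $d(d(x))\wedge x$. Accordingly I expect this step to be the only genuine, and still quite minor, obstacle in the proof.

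Finally the remaining items are bookkeeping. For (v), let $x\in I$; by (ii) $d(x)\le x$, and since $I$ is an ideal it is closed under meets with elements of $L$, so $d(x)=x\wedge d(x)\in I$, giving $d(I)\subseteq I$. For (vi), with $\top$ the top element, write $x=x\wedge\top$ and apply Lemma \ref{derivation}(i) to obtain $d(x)=d(x\wedge\top)=x\wedge d(\top)$. Item (vii) is then immediate from (vi) by absorption: if $x\le d(\top)$ then $x\wedge d(\top)=x$, so $d(x)=x$; and if $x\ge d(\top)$ then $x\wedge d(\top)=d(\top)$, so $d(x)=d(\top)$.
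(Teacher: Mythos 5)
Your proof is correct, and all seven parts check out — including the idempotency step, where $d(d(x))=d(d(x)\wedge x)=d(x)\wedge d(x)=d(x)$ closes cleanly once the form $a\wedge d(b)$ is chosen. The paper itself gives no proof of this lemma (it simply cites Ferrari and Xin), and your self-contained derivation from Lemma \ref{derivation} is the standard argument found in those references.
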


As a consequence of the part (iii) of Lemma \ref{property-derivation}, we will have the following corollary.
\begin{corollary}\label{der-hom}
Every derivation $d:L\rightarrow L$ is a lattice homomorphism.
\end{corollary}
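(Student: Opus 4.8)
The plan is to check directly that $d$ preserves both lattice operations. Preservation of joins is already available: Lemma \ref{derivation}(ii) states $d(x\vee y)=d(x)\vee d(y)$ for all $x,y\in L$, so nothing further is needed on the $\vee$ side. The entire content of the corollary is thus the identity $d(x\wedge y)=d(x)\wedge d(y)$, which does \emph{not} appear verbatim in Lemma \ref{derivation} and must be extracted from it.

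For the meet, I would start from the right-hand side and simplify using the two equalities packaged in Lemma \ref{derivation}(i), namely $d(a\wedge b)=d(a)\wedge b=a\wedge d(b)$. First, the equality $d(a)\wedge b=d(a\wedge b)$ with $a=x$, $b=d(y)$ gives $d(x)\wedge d(y)=d\bigl(x\wedge d(y)\bigr)$. Next, the equality $d(a\wedge b)=a\wedge d(b)$, again with $a=x$, $b=d(y)$, rewrites this as $x\wedge d\bigl(d(y)\bigr)$. Now Lemma \ref{property-derivation}(iii) (idempotence, $d(d(y))=d(y)$) reduces it to $x\wedge d(y)$, and one final application of Lemma \ref{derivation}(i), in the form $x\wedge d(y)=d(x\wedge y)$, identifies the result as $d(x\wedge y)$. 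Chaining these equalities yields $d(x)\wedge d(y)=d(x\wedge y)$.

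There is no genuine obstacle here; the only point requiring care is invoking the correct one of the two equalities in Lemma \ref{derivation}(i) at each step and ensuring idempotence (Lemma \ref{property-derivation}(iii)) is in place, which it is. Since a lattice homomorphism is by definition a map preserving $\vee$ and $\wedge$, combining the join identity with the computation above shows $d$ is a lattice homomorphism, completing the proof.
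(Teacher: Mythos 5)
Your proof is correct, and it follows the same route the paper intends: the paper gives no explicit argument but attributes the corollary to the idempotence $d(d(x))=d(x)$ of Lemma \ref{property-derivation}(iii), which is exactly the key step in your chain $d(x)\wedge d(y)=d(x\wedge d(y))=x\wedge d(d(y))=x\wedge d(y)=d(x\wedge y)$. Nothing is missing.
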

%\begin{proof}
%It is enough to show that for each $x, y\in L$, $d(x\wedge y)=d(x)\wedge d(y)$. By Lemma
%\ref{property-derivation}(iii), $d(x\wedge y)=d(d(x\wedge y))=d(x\wedge d(y))=d(x)\wedge d(y)$.
%\end{proof}

In sequence, in Section 2 we extend the concepts of \cite{sambasiva} of the distributive lattice with zero, $0$, (bottom element) to a distributive lattice with a nonempty ideal $I$ instead of $0$.
 Section 3 is devoted to the case where a distributive lattice $L$ is an atomic or more general is an $I$-atomic lattice. Our main results, the goal of this article, are become in section 4. There, we will show the best derivation on $L$ such that $L/\theta_I^d$ become a Boolean algebra is an identity. Finally we demonstrate some necessary and sufficient conditions under which  $L/\theta_I^d$ to be a Boolean algebra.\\

 \section{Congruences and ideals in a distributive lattice with respect to a derivation}
% We denote the category of all \boo s, with homomorphisms between them, by {\bf Boo-$S$}.
In this section we generalize the article \cite{sambasiva}, from a
distributive lattice with zero, $0$, (bottom element) to distributive
lattice and use a nonempty ideal $I$ instead of $0$. In what
follows we introduce some especial ideals and congruences with
respect to a nonempty ideal and a derivation on distributive
lattices. After that we study some essential properties of this
congruence, purposefully to use in Sections 3 and 4. Note that
most of the definitions of this part have been selected from the  reference
\cite{sambasiva}.

 Suppose $L$  is a distributive lattice, $I$ a nonempty ideal of $L$, $a\in L$ and $d$ a derivation on $L$.  By definition, we consider $ker_{_I}d=d^{-1}(I)=\{x\in L\mid d(x)\in I\}$ and $(a)_I^d=\{x\in L\mid a\wedge x\in ker_{_I}d\}=\{x\in L\mid d(a\wedge x)\in I\}$.

Another view of the subset $(a)_I^d$ of $L$ is of the form $(a)_I^d=(d\circ\lambda_a)^{-1}(I)$, in which $\lambda_a:L\rightarrow L$ is a derivation defined by
$\lambda_a(x)=a\wedge x$.

All parts of the following lemma will be used in the next results of the manuscript.
\begin{lemma}\label{property}
For each $a,b\in L$,

 ${\rm (i)}$~ $ker_{_I}d$ and $(a)_I^d$ are ideals of $L$.

${\rm (ii)}$ if $a\leq b$, then $(b)_I^d\subseteq (a)_I^d$.

${\rm (iii)}$~ $(a\vee b)_I^d=(a)_I^d\cap (b)_I^d$.

${\rm (iv)}$~ $I\subseteq ker_{_I}d\subseteq (a)_I^d$.

${\rm (v)}$ $a\in ker_{_I}d$ {\rm iff} $a\in (a)_I^d$ {\rm iff} $(a)_I^d=L$.

${\rm (vi)}$~ $\bigcap_{_{a\in L}} (a)_I^d=ker_{_I}d$.

 ${\rm (vii)}$~ $a\in (b)_I^d$ if and only if $b\in (a)_I^d$.

${\rm (ix)}$~ let $(a)_I^d\neq L$, then $\bigcap_{_{b\in (a)_I^d}} (b)_I^d\neq ker_{_I}d$.

 ${\rm (x)}$~ if $I$ and $J$ are ideals of $L$ in which $I\subseteq J$, then $ker_Id\subseteq ker_Jd$ and $(a)_I^d\subseteq (a)_J^d$, for each $a\in L$.
\end{lemma}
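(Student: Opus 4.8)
The plan is to extract one small reusable fact and then dispatch the ten items almost mechanically. Recall from the remark just before the statement that $(a)_I^d=(d\circ\lambda_a)^{-1}(I)$, where $\lambda_a(x)=a\wedge x$ is a derivation. The first move I would make is to observe that a composite of derivations is again a derivation: if $f,g$ are derivations then $(f\circ g)(x\vee y)=(f\circ g)(x)\vee(f\circ g)(y)$ is immediate, and using Lemma \ref{derivation}(i) twice, $f\bigl(g(x\wedge y)\bigr)=f\bigl(g(x)\wedge y\bigr)=f(g(x))\wedge y$ and likewise $f\bigl(x\wedge g(y)\bigr)=x\wedge f(g(y))$, so $(f\circ g)(x\wedge y)=(f\circ g)(x)\wedge y=x\wedge(f\circ g)(y)$. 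In particular $\mu_a:=d\circ\lambda_a$ is a derivation with $\mu_a(x)=d(a\wedge x)=a\wedge d(x)=d(a)\wedge x$. Thus both $ker_{_I}d=d^{-1}(I)$ and $(a)_I^d=\mu_a^{-1}(I)$ are preimages of the ideal $I$ under a derivation, and the only fact I really need is: \emph{if $f$ is a derivation on $L$ and $I$ is an ideal, then $f^{-1}(I)$ is an ideal.} Closure under $\vee$ comes from $f(x\vee y)=f(x)\vee f(y)\in I$; absorption comes from $f(x\wedge z)=f(x)\wedge z$, which lies in $I$ since $I$ is downward closed; and $f^{-1}(I)$ is nonempty because for any $j\in I$ we have $f(j)\le j$ (Lemma \ref{property-derivation}(ii)), hence $f(j)\in I$, i.e.\ $j\in f^{-1}(I)$. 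This proves (i) outright, and (x) is then just monotonicity of preimages ($I\subseteq J\Rightarrow f^{-1}(I)\subseteq f^{-1}(J)$) applied to $f=d$ and $f=\mu_a$.

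Next I would handle the "pointwise" items in an order that lets each use the earlier ones. For (iv): $I\subseteq ker_{_I}d$ is the nonemptiness computation above applied to $f=d$; and if $d(x)\in I$ then $d(a\wedge x)=a\wedge d(x)\le d(x)\in I$, so $ker_{_I}d\subseteq(a)_I^d$. For (ii): $a\le b$ gives $a\wedge x\le b\wedge x$, hence $d(a\wedge x)\le d(b\wedge x)$ by isotonicity (Lemma \ref{property-derivation}(iv)), so $x\in(b)_I^d$ forces $x\in(a)_I^d$. For (iii): using distributivity and Lemma \ref{derivation}(ii), $d\bigl((a\vee b)\wedge x\bigr)=d(a\wedge x)\vee d(b\wedge x)$, and for an ideal one has $u\vee v\in I$ iff $u\in I$ and $v\in I$ (forward direction via $u=(u\vee v)\wedge u$), which yields $(a\vee b)_I^d=(a)_I^d\cap(b)_I^d$. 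Part (vii) is simply commutativity of $\wedge$ in the defining condition $d(a\wedge b)\in I$.

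Finally (v), (vi), (ix). For (v): $a\in ker_{_I}d\iff d(a)=d(a\wedge a)\in I\iff a\in(a)_I^d$; and if $a\in(a)_I^d$, i.e.\ $d(a)\in I$, then for every $x$, $d(a\wedge x)=d(a)\wedge x\le d(a)\in I$, so $(a)_I^d=L$, the converse being trivial. For (vi): $ker_{_I}d\subseteq\bigcap_{a\in L}(a)_I^d$ by (iv), and if $x\in(a)_I^d$ for all $a$ then taking $a=x$ gives $x\in(x)_I^d$, hence $x\in ker_{_I}d$ by (v). For (ix): if $(a)_I^d\neq L$ then $a\notin ker_{_I}d$ by (v); yet for every $b\in(a)_I^d$ we get $a\in(b)_I^d$ by (vii), so $a\in\bigcap_{b\in(a)_I^d}(b)_I^d$, while $ker_{_I}d\subseteq\bigcap_{b\in(a)_I^d}(b)_I^d$ by (iv); hence the intersection strictly contains $ker_{_I}d$.

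I do not anticipate a genuine obstacle here; the two points demanding the only care are the verification that $d\circ\lambda_a$ is a derivation (so that the single preimage lemma covers $(a)_I^d$ as well as $ker_{_I}d$), and the fact that $L$ is not assumed bounded, so every nonemptiness assertion must be routed through $d(j)\le j$ for $j\in I$ rather than through a bottom or top element. Carrying the items out in the order (i), (x), (iv), (ii), (iii), (vii), (v), (vi), (ix) makes all dependencies available when needed.
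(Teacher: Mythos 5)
Your proof is correct and follows essentially the same route as the paper: the paper proves only (i), (v) and (ix) (leaving the other parts as clear), your arguments for (v) and (ix) coincide with its, and your packaging of (i) and (x) through the observation that $(a)_I^d=(d\circ\lambda_a)^{-1}(I)$ with $d\circ\lambda_a$ a derivation is exactly the viewpoint the paper records immediately before the lemma (the paper then just verifies (i) by the same direct computation your preimage lemma encapsulates). The remaining parts you write out are the routine verifications the paper omits, and they are all correct, including the care you take to route nonemptiness through $d(j)\leq j$ for $j\in I$ rather than through a bottom element.
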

\begin{proof}
We shall prove only unclear statements.

${\rm (i)}$~ We show only $(a)_I^d$ is an ideal of $L$. Let $x,y\in (a)_I^d$. Then $a\wedge d(x), a\wedge d(y)\in I$. Thus $a\wedge d(x\vee y)=(a\wedge d(x))\vee (a\wedge d(y))\in I$ and hence $x\vee y\in (a)_I^d$. Now let $x\in (a)_I^d$ and $y\in L$. Then $a\wedge d(x)\in I$. Thus $a\wedge d(x\wedge y)=(a\wedge d(x))\wedge d(y)\in I$ which implies $x\wedge y\in (a)_I^d$.

 %${\rm (iii)}$~By (ii), $(a\vee b)_I^d\subseteq (a)_I^d\cap(b)_I^d$. Let $x\in (a)_I^d\cap (b)_I^d$. Then $a\wedge d(x)\in I$ and $b\wedge d(x)\in I$, which implies $(a\vee b)\wedge d(x)\in I$. Thus $x\in (a\vee b)_I^d$.

% ${\rm (iv)}$ $I\subseteq ker_Id$, in the view of Lemma \ref{property-derivation}.(ii). Now let $a\in L$ and $x\in ker_Id$. Then $d(x)\in I$ and hence $a\wedge d(x)\in I$. So $x\in (a)_I^d$.

 ${\rm (v)}$~ If $a\in ker_Id$, by (iv), $a\in (a)_I^d$. Now consider $x\in L$ and $a\in (a)    _I^d$. Then $d(a)=d(a\wedge a)\in I$. So $x\wedge d(a)\in I$, which implies $x\in (a)_I^d$. Thus $(a)_I^d=L$. For the converse, let $(a)_I^d=L$. Then $a\in (a)_I^d$, which implies $d(a)\in I$. Therefore $a\in ker_Id$.

 %${\rm (vi)}$~ It is clear,  using (iv) and (v).

${\rm (ix)}$~ By (vii), $a\in \bigcap_{_{b\in (a)_I^d}} (b)_I^d$,
and applying (v), deduces that $\bigcap_{_{b\in (a)_I^d}}
(b)_I^d\neq ker_{_I}d$.
\end{proof}

 Now we introduce a binary relation on a distributive lattice with respect to an ideal and a derivation. The following proposition, which has an easy proof, shows that this binary relation is a lattice congruence.
\begin{proposition}
For an ideal $I$ of $L$, a binary relation $\theta_I^d$ defined as follow is a lattice congruence.
 $$x\theta_I^d y~~ {\rm iff}~~ (x)_I^d=(y)_I^d$$
\end{proposition}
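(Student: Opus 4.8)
The plan is to check the two defining clauses of a lattice congruence, after first observing that $\theta_I^d$ is automatically an equivalence relation: it is defined by the equality $(x)_I^d=(y)_I^d$ of subsets of $L$, and equality of sets is reflexive, symmetric and transitive. So the real content is compatibility with $\vee$ and with $\wedge$.

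Compatibility with $\vee$ is immediate from Lemma \ref{property}(iii). Assume $x\,\theta_I^d\,y$, i.e. $(x)_I^d=(y)_I^d$. For any $c\in L$ that lemma gives $(x\vee c)_I^d=(x)_I^d\cap (c)_I^d$ and $(y\vee c)_I^d=(y)_I^d\cap (c)_I^d$; intersecting the equal sets $(x)_I^d$ and $(y)_I^d$ with $(c)_I^d$ yields $(x\vee c)_I^d=(y\vee c)_I^d$, that is, $(x\vee c)\,\theta_I^d\,(y\vee c)$.

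For compatibility with $\wedge$ the key is the bookkeeping identity $(a\wedge c)_I^d=\lambda_c^{-1}\big((a)_I^d\big)=\{z\in L\mid c\wedge z\in (a)_I^d\}$, valid for all $a,c\in L$: since $\lambda_{a\wedge c}=\lambda_a\circ\lambda_c$ and $(a)_I^d=(d\circ\lambda_a)^{-1}(I)$ as noted in the text, one has $(a\wedge c)_I^d=(d\circ\lambda_a\circ\lambda_c)^{-1}(I)=\lambda_c^{-1}\big((a)_I^d\big)$; equivalently, $z\in (a\wedge c)_I^d$ says $d\big((a\wedge c)\wedge z\big)=d\big(a\wedge(c\wedge z)\big)\in I$, i.e. $c\wedge z\in (a)_I^d$. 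Granting this, if $(x)_I^d=(y)_I^d$ then for every $c\in L$
$$(x\wedge c)_I^d=\{z\in L\mid c\wedge z\in (x)_I^d\}=\{z\in L\mid c\wedge z\in (y)_I^d\}=(y\wedge c)_I^d,$$
so $(x\wedge c)\,\theta_I^d\,(y\wedge c)$, and the proof is complete.

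The only step with any substance — the "main obstacle", such as it is — is establishing the meet clause, and it reduces entirely to the identity $(a\wedge c)_I^d=\lambda_c^{-1}\big((a)_I^d\big)$; once that is in place, both congruence clauses follow simply by applying the same set-theoretic operation (intersection with $(c)_I^d$, resp. preimage under $\lambda_c$) to the two equal sides of $(x)_I^d=(y)_I^d$. This matches the author's remark that the proposition "has an easy proof."
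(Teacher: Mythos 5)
Your proof is correct: the relation is an equivalence by definition, join-compatibility follows at once from Lemma \ref{property}(iii), and your identity $(a\wedge c)_I^d=\{z\in L\mid c\wedge z\in (a)_I^d\}$ is a valid consequence of associativity of $\wedge$ (equivalently of $\lambda_{a\wedge c}=\lambda_a\circ\lambda_c$), which settles meet-compatibility. The paper omits the proof entirely, calling it easy, and your argument is precisely the routine verification the author had in mind.
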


An element $a\in L$ is called a {\it kernel element} with respect to an ideal $I$, if $(a)_I^d=ker_{_I}d$. Let us denote the set of all kernel elements with respect to the ideal $I$ of $L$ by ${\mathcal K}_I^d$.

 If $I=L$, then $I=ker_Id=(a)_I^d={\mathcal K}_I^d=L$ and hence $\theta_I^d=\nabla=\{(a,b)\mid a,b\in L\}$, which implies that $L/\theta_I^d$ is a singleton set. So, from now on, each ideal considers to be nontrivial $(I\neq L)$.

 \begin{lemma}\label{property filter}
${\rm (i)}$~ The subset ${\mathcal K}_I^d$ of $L$ is a filter, whenever ${\mathcal K}_I^d\neq \emptyset$.

${\rm (ii)}$~ $ker_Id=L$ if and only if ${\mathcal K}_I^d=L$.

 ${\rm (iii)}$~ If $(a)_I^d$ and ${\mathcal K}_I^d$ are nontrivial, then ${\mathcal K}_I^d\cap (a)_I^d=\emptyset$.

 ${\rm (iv)}$~ $(x)_I^d=(d(x))_I^d$ and $x\theta_I^d d(x)$, for all $x\in L$.

 ${\rm (v)}$~ If $x\theta_I^d y$, then $d(x)\theta_I^d d(y)$.

 \end{lemma}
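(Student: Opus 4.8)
The plan is to prove the five parts of Lemma~\ref{property filter} in order, using the basic arithmetic of the ideals $(a)_I^d$ established in Lemma~\ref{property} together with the characterization of derivations from Lemma~\ref{derivation} and the properties collected in Lemma~\ref{property-derivation}.

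For part~(i), suppose $\mathcal{K}_I^d\neq\emptyset$ and take $a,b\in\mathcal{K}_I^d$, so $(a)_I^d=(b)_I^d=ker_{_I}d$. Using Lemma~\ref{property}(iii), $(a\wedge b)_I^d\supseteq$\dots\ is not immediate, so instead I would argue: by Lemma~\ref{property}(iii) $(a\vee b)_I^d=(a)_I^d\cap(b)_I^d=ker_{_I}d$, hence $a\vee b\in\mathcal{K}_I^d$; for closure under meet with arbitrary $x\in L$, one needs $(a\wedge x)_I^d=ker_{_I}d$ whenever $a\wedge x\geq$ something in $\mathcal{K}_I^d$ — more precisely, to show $\mathcal{K}_I^d$ is a filter I must check it is an up-set and closed under $\wedge$. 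Closure under $\wedge$: if $(a)_I^d=(b)_I^d=ker_{_I}d$ then since always $ker_{_I}d\subseteq(a\wedge b)_I^d$ by Lemma~\ref{property}(iv), and for the reverse inclusion take $y\in(a\wedge b)_I^d$, i.e. $d(a\wedge b\wedge y)\in I$; writing $d(a\wedge(b\wedge y))=d(a)\wedge(b\wedge y)$ via Lemma~\ref{derivation}(i) and comparing with the condition defining $(a)_I^d$, one shows $y\in ker_{_I}d$ using that $(b)_I^d=ker_{_I}d$. The up-set property is Lemma~\ref{property}(ii): if $a\leq b$ and $(a)_I^d=ker_{_I}d$, then $(b)_I^d\subseteq(a)_I^d=ker_{_I}d\subseteq(b)_I^d$, so $(b)_I^d=ker_{_I}d$ and $b\in\mathcal{K}_I^d$. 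Part~(ii) is then a short corollary: $ker_{_I}d=L$ forces $(a)_I^d=L$ for all $a$ by Lemma~\ref{property}(iv) sandwiched with $ker_{_I}d$, hence every $a$ is a kernel element; conversely if $\mathcal{K}_I^d=L$ then $\bot$-type reasoning or simply Lemma~\ref{property}(vi), $\bigcap_a(a)_I^d=ker_{_I}d$, combined with each $(a)_I^d=ker_{_I}d$, is consistent, and picking any $a\in\mathcal{K}_I^d$ with $(a)_I^d=ker_{_I}d=L$ gives the claim; in fact $\mathcal{K}_I^d=L$ means $\top$-style element lies in it, but more directly $ker_{_I}d=(a)_I^d$ for all $a$ together with Lemma~\ref{property}(vi) gives $ker_{_I}d=L$ only after noting $a\in(a)_I^d$ iff $a\in ker_{_I}d$ — I would instead observe $\mathcal{K}_I^d=L\ni a$ for every $a$ means $a\in(a)_I^d$ always holds (since it equals $ker_{_I}d$), but $a\in(a)_I^d$ iff $(a)_I^d=L$ by Lemma~\ref{property}(v), so $ker_{_I}d=L$.

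Part~(iii): if $(a)_I^d$ and $\mathcal{K}_I^d$ are both nontrivial (proper), suppose for contradiction $b\in\mathcal{K}_I^d\cap(a)_I^d$. Then $b\in(a)_I^d$ gives, by Lemma~\ref{property}(vii), $a\in(b)_I^d=ker_{_I}d$, so by Lemma~\ref{property}(v) $(a)_I^d=L$, contradicting nontriviality of $(a)_I^d$. Part~(iv): from Lemma~\ref{property-derivation}(iii) we have $d(d(x))=d(x)$, and I claim $(x)_I^d=(d(x))_I^d$. The inclusion $(x)_I^d\subseteq(d(x))_I^d$ follows from Lemma~\ref{property}(ii) since $d(x)\leq x$ (Lemma~\ref{property-derivation}(ii)). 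For the reverse, take $y\in(d(x))_I^d$, so $d(d(x)\wedge y)\in I$; using Lemma~\ref{derivation}(i), $d(d(x)\wedge y)=d(d(x))\wedge y=d(x)\wedge y$, while $d(x\wedge y)=d(x)\wedge y$ likewise, so the two conditions coincide and $y\in(x)_I^d$. Then $x\,\theta_I^d\,d(x)$ is just the definition of the congruence. Part~(v) is immediate from~(iv): if $x\,\theta_I^d\,y$ then $(x)_I^d=(y)_I^d$, hence $(d(x))_I^d=(x)_I^d=(y)_I^d=(d(y))_I^d$ by~(iv), i.e. $d(x)\,\theta_I^d\,d(y)$.

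The main obstacle I anticipate is the meet-closure half of part~(i): showing $\mathcal{K}_I^d$ is closed under $\wedge$, i.e. that $(a\wedge b)_I^d=ker_{_I}d$ when $(a)_I^d=(b)_I^d=ker_{_I}d$. The containment $ker_{_I}d\subseteq(a\wedge b)_I^d$ is free from Lemma~\ref{property}(iv), so the work is the reverse inclusion, and the clean way is to unwind $d(a\wedge b\wedge y)=d(a)\wedge b\wedge y$ (Lemma~\ref{derivation}(i)) and recognize this as the membership condition $a\wedge(b\wedge y)\in ker_{_I}d$-after-derivation, i.e. $b\wedge y\in(a)_I^d=ker_{_I}d$, which then says $d(b\wedge y)\in I$, i.e. $y\in(b)_I^d=ker_{_I}d$; tracking which meet absorbs into the derivation at each step is the only delicate bookkeeping. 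Everything else reduces to the monotonicity and intersection identities of Lemma~\ref{property} plus $d\circ d=d$.
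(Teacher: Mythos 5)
Your parts (i), (iii), (iv) and (v) are correct and follow essentially the same route as the paper: for meet-closure in (i) you absorb the meets into the derivation exactly as the paper does ($a\wedge d(b\wedge y)=d((a\wedge b)\wedge y)\in I$ gives $b\wedge y\in(a)_I^d=ker_Id$, hence $y\in(b)_I^d=ker_Id$), the up-set/join property comes from Lemma \ref{property}(ii)--(iv), part (iii) is Lemma \ref{property}(vii) combined with (v), and part (iv) is monotonicity in one direction and $d\circ d=d$ in the other, with (v) an immediate corollary of (iv).

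The one genuine flaw is the reverse direction of (ii). Your final version of that step reads: $\mathcal{K}_I^d=L$ means $a\in(a)_I^d$ always holds, ``since it equals $ker_Id$.'' That inference is circular: knowing $(a)_I^d=ker_Id$ puts $a$ into $(a)_I^d$ only if you already know $a\in ker_Id$, which is precisely what you are trying to establish (and your earlier attempt, ``picking any $a\in\mathcal{K}_I^d$ with $(a)_I^d=ker_Id=L$,'' likewise presupposes the conclusion $ker_Id=L$). The repair is one line and does go through Lemma \ref{property}(v), but applied to an element already known to lie in $ker_Id$: since $I$ is a nonempty ideal and $I\subseteq ker_Id$ by Lemma \ref{property}(iv), pick $c\in I\subseteq ker_Id$; then $(c)_I^d=L$ by Lemma \ref{property}(v), while $c\in\mathcal{K}_I^d=L$ forces $(c)_I^d=ker_Id$, whence $ker_Id=L$. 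With that substitution the whole proof is sound and coincides with the paper's.
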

\begin{proof}
${\rm (i)}$~ Let $a,b\in {\mathcal K}_I^d$ and $c\in L$. By Lemma \ref{property}(iv), $ker_Id\subseteq (a\wedge b)_I^d$. For the converse, let $x\in (a\wedge b)_I^d$. Then $a\wedge d(b\wedge x)=d((a\wedge b)\wedge x)\in I$ and hence $b\wedge x\in (a)_I^d= ker_Id$. So $b\wedge d(x)=d(b\wedge x)\in I$, which implies $x\in (b)_I^d=ker_Id$. Thus $a\wedge b\in {\mathcal K}_I^d$. Also $a\vee c\in {\mathcal K}_I^d$, by Lemma \ref{property}(iii) and \ref{property}(iv).

 To prove (ii), apply Lemma \ref{property}(v) and for (iii), apply Lemma \ref{property}(vii) and \ref{property}(v).

 ${\rm (iv)}$~ By Lemma \ref{property-derivation}(ii), $d(x)\leq x$ and hence $(x)_I^d\subseteq (d(x))_I^d$. Let $y\in (d(x))_I^d$. Hence $d(y\wedge x)=d(y\wedge d(x))\in I$, which implies $y\in (x)_I^d$. Thus $(x)_I^d= (d(x))_I^d$.
\end{proof}

 The following proposition shows that the quotient lattice $L/\theta_I^d$ is a bounded lattice.
\begin{proposition}\label{bounded}
For a nontrivial ideal $I$ of $L$, the distributive lattice $L/\theta_I^d$ is a bounded lattice with

 ${\rm (i)}$ $\bot_{L/\theta_I^d}=ker_I d$,

 ${\rm (ii)}$ $\top_{L/\theta_I^d}={\mathcal K}_I^d$ whenever ${\mathcal K}_I^d\neq \emptyset$.
\end{proposition}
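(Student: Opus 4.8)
The plan is to verify that the congruence classes $ker_I d$ and $\mathcal{K}_I^d$ serve as the bottom and top of the quotient lattice $L/\theta_I^d$. Recall that the class of an element $a$ under $\theta_I^d$ is $[a] = \{x \in L \mid (x)_I^d = (a)_I^d\}$, and the order on $L/\theta_I^d$ is induced by the lattice operations, so $[a] \leq [b]$ iff $[a \wedge b] = [a]$ iff $(a \wedge b)_I^d = (a)_I^d$. Using Lemma \ref{property}(iii) (or rather the analogous statement for meets is not directly available, so I will instead work with the defining relation), I note that $(a \wedge b)_I^d = (a)_I^d$ holds precisely when $(a)_I^d \subseteq (a \wedge b)_I^d$, since the reverse inclusion follows from Lemma \ref{property}(ii) applied to $a \wedge b \leq a$.

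For part (i), I want to show $[ker_I d] \leq [a]$ for every $a \in L$, where by $[ker_I d]$ I mean the class of any kernel element — but more carefully, since $ker_I d$ is itself an ideal and not an element, the correct reading is that the bottom class is the set $\{x \in L \mid (x)_I^d = ker_I d\}$ wait, that is actually $\mathcal{K}_I^d$. I should be careful: Proposition \ref{bounded}(i) must mean that the bottom element of the quotient is the $\theta_I^d$-class consisting of those $x$ with $(x)_I^d = $ the largest possible value; by Lemma \ref{property}(iv), $(x)_I^d$ is always at least $ker_I d$ but can be as large as $L$. Since smaller elements of $L$ have larger $(\cdot)_I^d$ by Lemma \ref{property}(ii), the class with $(x)_I^d = L$ is the bottom. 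By Lemma \ref{property}(v), $(x)_I^d = L$ iff $x \in ker_I d$, so the bottom class is exactly $ker_I d$ as a subset of $L$; thus the statement $\bot = ker_I d$ is literally the assertion that this $\theta_I^d$-class equals the set $ker_I d$, which follows from Lemma \ref{property}(v). I would then check the ordering directly: for any $a$, $(a \wedge x)_I^d \supseteq (a)_I^d$, and since $x \in ker_I d \subseteq (a)_I^d$... actually I need $(a \wedge x)_I^d = L$, equivalently $a \wedge x \in ker_I d$, which holds because $ker_I d$ is an ideal and $x \in ker_I d$. Hence $[x] \wedge [a] = [a \wedge x] = [x]$, so $[x] \leq [a]$.

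For part (ii), dually, the top class should consist of those $x$ with $(x)_I^d$ as small as possible, i.e.\ $(x)_I^d = ker_I d$, which is precisely the definition of $\mathcal{K}_I^d$; this is nonempty by hypothesis. To see $[a] \leq [\kappa]$ for $\kappa \in \mathcal{K}_I^d$ and arbitrary $a$, I must show $(a \vee \kappa)_I^d = (a)_I^d$; by Lemma \ref{property}(iii), $(a \vee \kappa)_I^d = (a)_I^d \cap (\kappa)_I^d = (a)_I^d \cap ker_I d = ker_I d$ by Lemma \ref{property}(iv), wait that gives $(a\vee\kappa)_I^d = ker_I d$, not $(a)_I^d$; so in fact $[a \vee \kappa]$ has value $ker_I d$, meaning $a \vee \kappa \in \mathcal{K}_I^d$, so $[a] \vee [\kappa] = [\kappa]$, giving $[a] \leq [\kappa]$. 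Finally I should confirm well-definedness: $\mathcal{K}_I^d$ really is a single $\theta_I^d$-class because all its elements share the common value $(x)_I^d = ker_I d$; likewise $ker_I d$ is a single class because all its elements have $(x)_I^d = L$. I do not anticipate a serious obstacle here — the main subtlety is purely notational, namely reconciling the statement "$\bot = ker_I d$" (an ideal) with the fact that elements of the quotient are congruence classes, which is resolved by observing these two sets coincide with the relevant classes via Lemma \ref{property}(v) and the definition of $\mathcal{K}_I^d$.
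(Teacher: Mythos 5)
Your argument is correct and follows essentially the same route as the paper: identify the bottom class as $\{x \mid (x)_I^d = L\} = ker_I d$ via Lemma \ref{property}(v), use the ideal property of $ker_I d$ to get $[x\wedge a]=[x]$, and dually use Lemma \ref{property}(iii)--(iv) to show $a\vee\kappa\in\mathcal{K}_I^d$ for the top. The only difference is cosmetic: the paper proves (i) in this way and dismisses (ii) as ``similar,'' whereas you write out the dual computation explicitly.
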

\begin{proof}
${\rm (i)}$ Let $a\in ker_Id$. By Lemma \ref{property}, for each $b\in ker_Id$, $(a)_I^d=L=(b)_I^d$ and hence $a\theta_I^d b$. Thus $ker_Id\subseteq [a]_{\theta_I^d}$. For the converse, let $c\in [a]_{\theta_I^d}$. Again, by Lemma \ref{property}, $(c)_I^d=(a)_I^d=L$ and hence $c\in (c)_I^d$. So $d(c)=d(c \wedge c)\in I$, which implies $c\in ker_Id$. Thus $ker_Id= [a]_{\theta_I^d}$. Since $ker_Id$ is an ideal of $L$, for each $[y]_{\theta_I^d}\in L/{\theta_I^d}$, we get that $a\wedge y\in ker_Id$ and hence $ker_Id=[a]_{\theta_I^d}=[a\wedge y]_{\theta_I^d}\leq [y]_{\theta_I^d}$. Therefore $\bot_{L/\theta_I^d}=ker_I d$.

 ${\rm (ii)}$ The proof is similar to  ${\rm (i)}$.
\end{proof}

As seen in Lemma \ref{property filter}(i), ${\mathcal K}_I^d$ is a filter, whenever ${\mathcal K}_I^d\neq \emptyset$. So in the following lemma we investigate some conditions over which ${\mathcal K}_I^d\neq \emptyset$.
\begin{lemma}\label{nonempty}
${\rm (i)}$ If $\top \in L$, then $\top, d(\top)\in {\mathcal K}_I^d$.

 ${\rm (ii)}$ If $I$ or $ker_Id$ is a prime nontrivial ideal of $L$, then ${\mathcal K}_I^d\neq \emptyset$ and if $ker_Id\neq L$, then $L$ is a disjoint union of $ker_Id$ and ${\mathcal K}_I^d$. Also $\theta_I^d=\{(a,b)\mid \{a,b\}\subseteq ker_Id ~or~ \{a,b\}\subseteq {\mathcal K}_I^d\}$.

 ${\rm (iii)}$ If $L$ is a chain and $I$ a nontrivial ideal of $L$, then ${\mathcal K}_I^d\neq \emptyset$.
\end{lemma}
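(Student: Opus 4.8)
The plan is to prove each of the three parts directly from the definition $a\in\mathcal K_I^d\iff (a)_I^d=ker_Id$ and the basic machinery of Lemmas \ref{property}--\ref{property-derivation}.

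For part (i), the idea is simply that $\top$ absorbs everything: for any $x\in L$ we have $\top\wedge x=x$, so $d(\top\wedge x)=d(x)$, and the analysis reduces to asking when $d(x)\in I$, i.e. $(\top)_I^d=d^{-1}(I)=ker_Id$; thus $\top\in\mathcal K_I^d$. For $d(\top)$, I would invoke Lemma \ref{property filter}(iv), which gives $(d(\top))_I^d=(\top)_I^d=ker_Id$, so $d(\top)\in\mathcal K_I^d$ as well. (Alternatively one uses Lemma \ref{property-derivation}(vi), $d(x)=x\wedge d(\top)$, to compute $(d(\top))_I^d$ directly.) This part is essentially immediate.

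For part (ii), suppose first $I$ is a prime nontrivial ideal. I claim $L\setminus I\subseteq \mathcal K_I^d$; since $I$ is nontrivial this set is nonempty. Take $a\notin I$; I must show $(a)_I^d=ker_Id$. The inclusion $ker_Id\subseteq (a)_I^d$ is Lemma \ref{property}(iv). Conversely, if $x\in (a)_I^d$ then $d(a\wedge x)\in I$; by Lemma \ref{derivation}(i), $d(a\wedge x)=a\wedge d(x)$, so $a\wedge d(x)\in I$, and primeness of $I$ together with $a\notin I$ forces $d(x)\in I$, i.e. $x\in ker_Id$. So $(a)_I^d=ker_Id$ and $a\in\mathcal K_I^d$. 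If instead $ker_Id$ is prime nontrivial, the same argument works replacing $I$ by $ker_Id$ in the role of the prime ideal (using $a\wedge d(x)\in I\subseteq ker_Id$ and primeness of $ker_Id$). Now assume $ker_Id\neq L$. By the above, every element of $L$ lies in $ker_Id$ or in $\mathcal K_I^d$; by Lemma \ref{property filter}(iii) these two sets are disjoint (here one checks $(a)_I^d\neq L$ for $a\notin ker_Id$ via Lemma \ref{property}(v), so $(a)_I^d$ is nontrivial and $\mathcal K_I^d$ is nontrivial since $ker_Id\neq L$). Hence $L=ker_Id\,\dot\cup\,\mathcal K_I^d$. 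The description of $\theta_I^d$ then follows: within $ker_Id$ all elements have $(x)_I^d=L$ (Lemma \ref{property}(v)), so they are all $\theta_I^d$-equivalent; within $\mathcal K_I^d$ all elements have $(x)_I^d=ker_Id$ by definition, so they are all equivalent; and an element of $ker_Id$ cannot be equivalent to one of $\mathcal K_I^d$ since $L\neq ker_Id$. This gives exactly two classes and the stated formula.

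For part (iii), $L$ is a chain. I would reduce to part (ii) by showing every nontrivial ideal $I$ of a chain is prime — indeed in a chain $a\wedge b=\min(a,b)$, so $a\wedge b\in I$ forces $a\in I$ or $b\in I$ automatically, and a nontrivial ideal is proper, hence prime. Then part (ii) yields $\mathcal K_I^d\neq\emptyset$. The main thing to be careful about is the degenerate possibility $ker_Id=L$; but in that case $\mathcal K_I^d=L\neq\emptyset$ by Lemma \ref{property filter}(ii), so the conclusion holds anyway. I do not anticipate a serious obstacle here; the only delicate point across all three parts is keeping straight the two possible sources of primeness ($I$ versus $ker_Id$) in part (ii) and verifying the nontriviality hypotheses needed to apply Lemma \ref{property filter}(iii) for the disjointness claim.
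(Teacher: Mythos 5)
Your proposal is correct and follows essentially the same route as the paper: part (i) is the direct computation $(\top)_I^d=ker_Id$ together with $(d(\top))_I^d=(\top)_I^d$; part (ii) uses primeness to force $(a)_I^d\subseteq ker_Id$ for suitable $a$, then Lemma \ref{property filter}(iii) for disjointness and the two-class structure for the description of $\theta_I^d$; part (iii) reduces to (ii) by noting that nontrivial ideals of a chain are prime. The only cosmetic difference is that in the $I$-prime case you take $a\notin I$ where the paper takes $a\notin ker_Id$, which is equally valid.
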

\begin{proof}
${\rm (i)}$ Is obvious.

 ${\rm (ii)}$ If $ker_Id=L$, then $ker_Id={\mathcal K}_I^d=L$. Let $ker_Id\neq L$ and $b\notin ker_Id$ and $x\in (b)_I^d$. Then $x\wedge d(b)\in I$ and $d(b)\notin I$. If $I$ is prime, $x\in I\subseteq ker_Id$ and if $ker_Id$ is prime, then $x\in ker_Id$ . Thus $b\in {\mathcal K}_I^d$. So $L=ker_Id \cup {\mathcal K}_I^d$ and the first part of the proof will be complete by using Lemma \ref{property filter}.(iii). Now by Proposition \ref{bounded},
$\theta_I^d=\{(a,b)\mid \{a,b\}\subseteq ker_Id ~or~ \{a,b\}\subseteq {\mathcal K}_I^d\}$.

${\rm (iii)}$ It is easy to check that every nontrivial ideal in
a chain is prime. So (ii), completes the proof.
\end{proof}

As a consequence of Lemma \ref{nonempty}(ii), we  conclude that, if $I\subseteq J$, there is no relation between ${\mathcal K}_I^d$ and ${\mathcal K}_J^d$ at all. For example, let $I\subseteq J$ be two prime ideals of $L$ and $d$ be an identity derivation. By Lemmas \ref{nonempty}(ii) and \ref{property}(x), ${\mathcal K}_J^d\subseteq {\mathcal K}_I^d$.

 For  another example, let $L$ have a bottom element $\bot$. Consider $\bot\neq a\in L$, $I=\{\bot\}$, $J=\downarrow a$ and a derivation $d$ defined by $d(x)=a\wedge x$. Clearly $ker_Jd=L$ and, since $d(a)=a\wedge a=a\neq \bot$, $a\notin ker_Id$. So, by Lemma \ref{property filter}(ii), ${\mathcal K}_I^d\subseteq {\mathcal K}_J^d$.

\begin{proposition}\label{greatest}
For a nontrivial ideal $I$ of $L$, the congruence $\theta_I^d$ is the greatest congruence relation having $ker_I d$ as a whole class.
\end{proposition}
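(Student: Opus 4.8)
The plan is to show two things: first that $\theta_I^d$ has $ker_I d$ as one of its equivalence classes (which is already essentially Proposition~\ref{bounded}(i)), and second that any lattice congruence $\psi$ on $L$ for which $ker_I d$ is a single $\psi$-class must be contained in $\theta_I^d$, i.e. $a\psi b$ implies $(a)_I^d=(b)_I^d$. The first point I would simply quote from Proposition~\ref{bounded}(i), where it was shown $ker_I d=[a]_{\theta_I^d}$ for any $a\in ker_I d$.

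For the second point, suppose $\psi$ is a lattice congruence with $ker_I d$ as a full class, and assume $a\psi b$. I want to prove $(a)_I^d\subseteq (b)_I^d$ (the reverse inclusion is symmetric). Take $x\in (a)_I^d$, so $d(a\wedge x)=a\wedge d(x)\in I\subseteq ker_I d$. Since $\psi$ is a congruence, from $a\psi b$ we get $(a\wedge d(x))\,\psi\,(b\wedge d(x))$. Now $a\wedge d(x)\in ker_I d$; because $ker_I d$ is a whole $\psi$-class and $a\wedge d(x)$ lies in it, the element $b\wedge d(x)$ — being $\psi$-related to it — must also lie in $ker_I d$. Hence $d(b\wedge d(x))\in I$. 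Using Lemma~\ref{property-derivation}(iii) (or Lemma~\ref{derivation}(i)) one has $d(b\wedge d(x))=b\wedge d(d(x))=b\wedge d(x)=d(b\wedge x)$, so $d(b\wedge x)\in I$, which says exactly $x\in (b)_I^d$. This gives $(a)_I^d\subseteq (b)_I^d$, and by symmetry equality, so $a\,\theta_I^d\,b$; thus $\psi\subseteq\theta_I^d$.

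The one delicate step — the main obstacle — is the passage \emph{from} ``$a\wedge d(x)\in ker_I d$ and $(a\wedge d(x))\,\psi\,(b\wedge d(x))$'' \emph{to} ``$b\wedge d(x)\in ker_I d$.'' This is precisely where the hypothesis that $ker_I d$ is a \emph{whole} $\psi$-class (not merely a union of classes, nor merely $\psi$-closed on one side) is used: an element $\psi$-equivalent to a member of a full class is again in that class. I would make sure the statement and its use are phrased so this is transparent, since a weaker hypothesis would break the argument. Everything else — that $ker_I d$ is an ideal, the simplification of $d$ on meets, and the symmetry of the relation $x\in(b)_I^d\iff b\in(x)_I^d$ from Lemma~\ref{property}(vii) if one prefers that route — is routine and can be cited directly from the results already established above.
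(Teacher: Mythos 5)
Your proof is correct and uses essentially the same mechanism as the paper: apply the congruence to the meet of $a\psi b$ with an element of $(a)_I^d$, observe the result lands in $\ker_I d$, and invoke the whole-class hypothesis to transfer membership to $b$. The only cosmetic differences are that you meet with $d(x)$ instead of $x$ (forcing a small extra computation via $d(d(x))=d(x)$, where meeting with $x$ directly gives $a\wedge x\in ker_I d$ at once) and that you skip the paper's three-way case split on membership in ${\mathcal K}_I^d$, which is in fact unnecessary.
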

\begin{proof}
By Proposition \ref{bounded}, ${\mathcal K}_I^d$ and $ker_I d$ are
whole classes. Let $\theta$ be a lattice congruence on $L$ such
that $ker_I d$ is a whole class and $x\theta y$. The following
cases may  occur:

 $Case \ 1.$ $x,y\in {\mathcal K}_I^d$. Hence $(x)_I^d=ker_Id=(y)_I^d$ and $x\theta_I^d y$.

 $Case\ 2.$ $x,y \notin {\mathcal K}_I^d$. For each $a\in (x)_I^d$, $(x\wedge a)\theta (y\wedge a)$ and $x\wedge a\in ker_Id$. Then $[y\wedge a]_{\theta}=[x\wedge a]_{\theta}=ker_Id$. So $y\wedge a\in ker_Id$ and $a\in (y)_I^d$. Thus $(x)_I^d\subseteq (y)_I^d$ and, by a similar way, $(y)_I^d\subseteq (x)_I^d$, which implies that $x\theta_I^d y$.

 $Case \ 3.$ $x\in {\mathcal K}_I^d$ and $y \notin {\mathcal K}_I^d$ (or similarly $x\in {\mathcal K}_I^d$ and $y \notin {\mathcal K}_I^d$). This case may not  occur. For, consider $b\in (y)_I^d\setminus (x)_I^d$. Then $b\wedge y\in ker_Id$ and $b\wedge x\notin ker_Id$. Also $(b\wedge x)\theta (b\wedge y)$. So $b\wedge x\in ker_Id$, which is impossible. Therefore $\theta\subseteq \theta_I^d$.
\end{proof}

From now on, up to the Lemma \ref{lem 3}, we investigate some conditions over ideals and derivations to get a smallest congruence $\theta_I^d$. The smallest one infer that the quotient lattice $L/\theta_I^d$ has the maximal cardinality.
\begin{proposition}\label{prop1}
For an ideal $I$ and a derivation $d$ on $L$, $\theta^{id}_I\subseteq \theta^d_I$.
\end{proposition}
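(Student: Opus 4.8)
The plan is to reduce both congruences to membership statements about $I$ and then to observe that the condition defining $\theta_I^{id}$ is just the condition defining $\theta_I^d$ tested against a larger set of ``probe'' elements.

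First I would reformulate the sets $(a)_I^d$. For any $a,z\in L$, Lemma~\ref{derivation}(i) gives $d(a\wedge z)=a\wedge d(z)$, so
$$(a)_I^d=\{\,z\in L\mid a\wedge d(z)\in I\,\}.$$
In particular, taking $d$ to be the identity, $(a)_I^{id}=\{\,z\in L\mid a\wedge z\in I\,\}$; hence $x\,\theta_I^{id}\,y$ is equivalent to the assertion that $x\wedge w\in I\iff y\wedge w\in I$ for every $w\in L$.

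Now suppose $x\,\theta_I^{id}\,y$ and let $z\in(x)_I^d$, i.e.\ $x\wedge d(z)\in I$. Applying the above equivalence with $w=d(z)$ yields $y\wedge d(z)\in I$, that is $z\in(y)_I^d$. Hence $(x)_I^d\subseteq(y)_I^d$, and by the symmetric argument $(y)_I^d\subseteq(x)_I^d$, so $(x)_I^d=(y)_I^d$, i.e.\ $x\,\theta_I^d\,y$. This proves $\theta_I^{id}\subseteq\theta_I^d$.

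There is essentially no obstacle here: the only step that takes a moment's thought is the rewriting $(a)_I^d=\{\,z\mid a\wedge d(z)\in I\,\}$ via Lemma~\ref{derivation}(i), after which the inclusion is immediate, since quantifying over \emph{all} $w\in L$ is a fortiori enough to control the elements of the special form $w=d(z)$. (Equivalently one could note $(a)_I^d=(d(a))_I^{id}$ and invoke Corollary~\ref{der-hom} together with the compatibility of $\theta_I^{id}$ with the lattice operations, but the direct argument above is shorter and avoids having to check that the congruence $\theta_I^{id}$ is preserved by the endomorphism $d$.)
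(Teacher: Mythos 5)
Your proof is correct and is essentially the paper's own argument: the paper likewise observes that $z\in(x)_I^d$ means $d(z)\in(x)_I^{id}$, uses $(x)_I^{id}=(y)_I^{id}$ to transfer this to $y$, and concludes $(x)_I^d=(y)_I^d$ by symmetry. Your "probe element $w=d(z)$" phrasing is just a restatement of that same step.
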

\begin{proof}
Let $a\theta^{id}_I b$ and $x\in (a)_I^d$. Then $d(x)\in (a)_I^{id}=(b)_I^{id}$. So $d(b\wedge x)=b\wedge id(d(x))\in I$. Thus $x\in (b)_I^d$ which implies $(a)_I^d\subseteq (b)_I^d$ and, by similar way, $(b)_I^d\subseteq (a)_I^d$. So $a\theta_I^d b$.
\end{proof}

The following example shows that $\theta_J^d$ and ${\mathcal K}_J^d$ need not  be larger or smaller with ideal enlargement.

 \begin{example}
${\rm (i)}$ Let $L=\{a,b,c,d\}$ in which $a\prec b\prec c\prec d$, $I=\{a\}$, $J=\{a,b,c\}$ and $f$  an identity derivation on $L$. So $I\subset J$. It is not difficult to check that $(a,b)\in \theta_J^f\setminus \theta_I^f$ and $(b,c)\in \theta_I^f\setminus \theta_J^f$.
Thus $\theta_I^f \subseteq \hspace*{-0.35 cm }/~ \theta_J^f$ and $\theta_J^f\subseteq \hspace*{-0.35 cm }/~\theta_I^f$. Also, by Lemmas \ref{property filter}(ii) and \ref{property}(x), ${\mathcal K}_J^f\subseteq {\mathcal K}_I^f$.

 ${\rm (ii)}$ Let $L=\{a,b,c,d\}$ in which $a$ and $d$ are bottoms
and top element, respectively and $c$ and $d$  have
no relation. Consider $I=\{a\}$ and $J=\{a,b\}$ and $id$ the
identity map. So $I\subset J$. It is not difficult to check that
${\mathcal K}^{id}_I=\{d\}\subset \{c,d\}={\mathcal K}_J^{id}$ and $\theta^{id}_I\subset \theta^{id}_J$.
\end{example}

\begin{lemma}\label{I subset J}
For  ideals $I\subseteq J$ and a derivation $d$ on $L$, if there exists a derivation $d_1$ on $L$ such that $ker_Id_1=J$, then $\theta_I^d\subseteq \theta^d_J$ and the equality  holds if $d_1=d$.
\end{lemma}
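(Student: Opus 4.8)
The plan is to reduce everything to one observation: every element of the form $t=d_1(d(x))$ is fixed by $d$, i.e. $d(t)=t$. This follows from Lemma~\ref{derivation} together with Lemma~\ref{property-derivation}(ii),(iii): since $d(x)\wedge x=d(x)$ we have $d_1(d(x))=d_1\bigl(d(x)\wedge x\bigr)=d(x)\wedge d_1(x)$, and then $d\bigl(d(x)\wedge d_1(x)\bigr)=d(d(x))\wedge d_1(x)=d(x)\wedge d_1(x)$. (Equivalently, $D:=d_1\circ d$ is again a derivation, $D(x)=d(x)\wedge d_1(x)$, and $d\circ D=D$.) One should also keep in mind that $J=ker_Id_1=d_1^{-1}(I)$, so ``$y\in J$'' means precisely ``$d_1(y)\in I$''.

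First I would establish $\theta_I^d\subseteq\theta_J^d$. Suppose $a\,\theta_I^d\,b$, i.e. $(a)_I^d=(b)_I^d$; since this hypothesis is symmetric in $a$ and $b$, it is enough to show $(a)_J^d\subseteq(b)_J^d$. Take $x\in(a)_J^d$. Using the product rule, $a\wedge d(x)=d(a\wedge x)\in J$, hence $a\wedge d_1(d(x))=d_1(a\wedge d(x))\in I$; that is, $a\wedge t\in I$ with $t=d_1(d(x))$. Because $d(t)=t$, this says $d(a\wedge t)=a\wedge d(t)\in I$, so $t\in(a)_I^d=(b)_I^d$, and therefore $d(b\wedge t)=b\wedge d(t)=b\wedge t\in I$. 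Now run the computation backwards: from $b\wedge d_1(d(x))\in I$ we get $d_1(b\wedge d(x))\in I$, i.e. $b\wedge d(x)\in d_1^{-1}(I)=J$, i.e. $d(b\wedge x)\in J$, i.e. $x\in(b)_J^d$. Exchanging $a$ and $b$ gives the reverse inclusion, hence $a\,\theta_J^d\,b$.

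For the case $d_1=d$ one has $J=ker_Id$ and the composition disappears — plain idempotency of $d$ suffices. For all $a,x\in L$,
$$x\in(a)_J^d\iff d(a\wedge x)\in ker_Id\iff d\bigl(d(a\wedge x)\bigr)\in I\iff d(a\wedge x)\in I\iff x\in(a)_I^d,$$
the middle equivalence being Lemma~\ref{property-derivation}(iii). Hence $(a)_J^d=(a)_I^d$ for every $a\in L$, so $\theta_J^d=\theta_I^d$.

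The only non-routine point is spotting the auxiliary element $t=d_1(d(x))$ and the fixed-point identity $d(d_1(d(x)))=d_1(d(x))$; once that is available, the rest is a mechanical back-and-forth translation between membership in $I$, membership in $J=d_1^{-1}(I)$, and membership in $(a)_I^d$ and $(a)_J^d$, using only Lemma~\ref{derivation} and the idempotency of derivations. A cosmetic alternative is to introduce $D=d_1\circ d$ (a derivation) from the start and check $(a)_J^d=(a)_I^{D}$ for all $a$, so that $\theta_J^d=\theta_I^{D}$; then the middle paragraph is exactly the verification that $\theta_I^d\subseteq\theta_I^{D}$, and the equality case collapses because $D=d\circ d=d$ when $d_1=d$.
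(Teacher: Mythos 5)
Your proof is correct and follows essentially the same route as the paper's: both reduce $x\in(a)_J^d$ to membership of an auxiliary element in $(a)_I^d$ via the translation $J=ker_Id_1=d_1^{-1}(I)$ and the derivation product rule (the paper works with $d_1(x)$ where you work with $t=d_1(d(x))$, at the cost of the extra --- correct --- observation that $d(t)=t$). Your handling of the equality case is in fact cleaner than the paper's, since you show the ideals $(a)^d_{ker_Id}$ and $(a)^d_I$ literally coincide for every $a$, whereas the paper only sketches the single inclusion $\theta^d_{ker_Id}\subseteq\theta^d_I$.
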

\begin{proof}
Let $a \theta_I^d b$ and $x\in (a)^d_J$. Then $d(x\wedge a)\in J=ker_Id_1$, which implies $d_1(x)\wedge d(a)=d_1(d(x\wedge a))\in I$. So $d_1(x)\in (a)_I^d=(b)_I^d$, which implies $d(x\wedge b)\in ker_Id_1=J$. Thus $x\in (b)^d_J$. This gives that $\theta_I^d\subseteq \theta^d_J$.

 Now let $d_1=d$. Consider $a \theta_{ker_Id}^d b$ and $x\in (a)^d_I$. Since $ker_Id$ is an ideal and $d(x)\leq x$, $d(x)\wedge a=d(x\wedge a) \in ker_Id$. So $x\in (a)_{ker_Id}^d=(b)_{ker_Id}^d$ and $d(x\wedge b)\in ker_Id$. Now it is not difficult to show that $x\in (b)_I^d$. Thus $\theta^d_{ker_Id}\subseteq \theta_I^d $.
\end{proof}
Here we have an example in which for ideals $I\subseteq J$ there
is no derivation $d$ on $L$ such that $ker_Id=J$. Suppose that
$L$ is a chain with at least 3 elements and a bottom element
$\bot$. Consider $I=\{\bot\}$ and $J$ a nontrivial ideal of $L$,
which properly contains $I$. Let $d$ be a derivation on $L$ such
that $ker_Id=J$. Consider $\bot\neq x\in J$ and $y\notin J$. So
$x\leq y$ and $x\wedge d(y)=d(x\wedge y)= d(x)=\bot$. Thus
$d(y)=\bot$, because  $L$ is a chain, and hence $y\in J$, which
is impossible.

 \begin{lemma}\label{lem 3}
Let $I$ be an ideal of $L$ and $a\in L$. If $J=(a)_I^d$ and $K$ is an ideal of $L$ such that $I\subseteq K\subseteq (a)_I^d$, then

 ${\rm (i)}$ $(a)_J^d=(a)_I^d=J( a\in {\mathcal K}_J^d)$.

${\rm (ii)}$ $(a)_I^d=(a)_K^d$.

${\rm (iii)}$ $\theta_I^d\subseteq \theta^d_K\subseteq \theta^d_J$.

 ${\rm (iv)}$ $\theta_I^d= \theta^d_J$ whenever $a\in {\mathcal K}_I^d$.
\end{lemma}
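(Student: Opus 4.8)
The plan is to derive everything from two elementary properties of the derivation — idempotence $d\circ d=d$ (Lemma \ref{property-derivation}(iii)) and the identity $d(a\wedge z)=a\wedge d(z)$ (Lemma \ref{derivation}(i)) — together with Lemma \ref{I subset J}. I would establish (ii) first, obtain (i) from it as the case $K=J$ (plus a short extra computation for $a\in{\mathcal K}_J^d$), then get (iv) from the ``equality'' clause of Lemma \ref{I subset J}, and finally handle (iii) by splitting it into its two inclusions.

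For (ii): the inclusion $(a)_I^d\subseteq(a)_K^d$ is Lemma \ref{property}(x), since $I\subseteq K$. For the reverse I would first isolate an ``absorption'' fact: for every $z\in L$,
\[
a\wedge d(z)\in I\ \Longleftrightarrow\ a\wedge d(z)\in(a)_I^d .
\]
Here $\Rightarrow$ is $I\subseteq(a)_I^d$ (Lemma \ref{property}(iv)), and $\Leftarrow$ holds because $d\bigl(a\wedge(a\wedge d(z))\bigr)=a\wedge d(d(z))=a\wedge d(z)$, so the defining condition for $a\wedge d(z)\in(a)_I^d$ is just $a\wedge d(z)\in I$. Now if $z\in(a)_K^d$ then $a\wedge d(z)=d(a\wedge z)\in K\subseteq(a)_I^d$, hence $a\wedge d(z)\in I$ by absorption, i.e. $z\in(a)_I^d$; so $(a)_K^d=(a)_I^d$. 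Specializing to $K=J=(a)_I^d$ gives $(a)_J^d=(a)_I^d=J$, and for $a\in{\mathcal K}_J^d$ I would verify $ker_J d=J$: $z\in ker_J d$ iff $d(z)\in(a)_I^d$ iff $a\wedge d(d(z))\in I$ iff $a\wedge d(z)\in I$ iff $z\in(a)_I^d=J$, so $ker_J d=J=(a)_J^d$.

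For (iv): $a\in{\mathcal K}_I^d$ says $J=(a)_I^d=ker_I d$, so Lemma \ref{I subset J} with the derivation $d_1=d$ (admissible since then $ker_I d_1=ker_I d=J$) gives $\theta_I^d=\theta_{ker_I d}^d=\theta_J^d$.

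For (iii) I would argue the two inclusions separately. For $\theta_K^d\subseteq\theta_J^d$: the map $\lambda_a\colon z\mapsto a\wedge z$ is a derivation and so is $d\circ\lambda_a$ (one checks $(d\circ\lambda_a)(x\wedge y)=a\wedge d(x)\wedge y=(d\circ\lambda_a)(x)\wedge y$ and $(d\circ\lambda_a)(x\vee y)=(d\circ\lambda_a)(x)\vee(d\circ\lambda_a)(y)$); since $ker_K(d\circ\lambda_a)=(a)_K^d=(a)_I^d=J$ by (ii), Lemma \ref{I subset J} with base ideal $K$ gives $\theta_K^d\subseteq\theta_J^d$. The inclusion $\theta_I^d\subseteq\theta_K^d$ is the step I expect to be the main obstacle: Lemma \ref{I subset J} would deliver it at once if $K$ were the $I$-kernel of some derivation (e.g. if $K=(b)_I^d$, via $d_1=d\circ\lambda_b$), but a general ideal sandwiched between $I$ and $(a)_I^d$ need not be of this form, so a direct argument seems to be required. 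I would attempt: given $(x)_I^d=(y)_I^d$ and $z$ with $x\wedge d(z)\in K$, use that $\theta_I^d$ is a lattice congruence to get $x\wedge d(z)\mathrel{\theta_I^d}y\wedge d(z)$ and then use $x\wedge d(z)\in K\subseteq(a)_I^d$ to place $y\wedge d(z)$; the delicate point — the one I would check hardest — is whether $K\subseteq(a)_I^d$ genuinely forces $y\wedge d(z)\in K$, rather than only $y\wedge d(z)\in(a)_I^d$. If it does not, I would expect the hypothesis on $K$ to require tightening (for instance to $K=(b)_I^d$ for some $b\in L$), which is precisely the case handled by Lemma \ref{I subset J} directly.
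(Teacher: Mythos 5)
Your proofs of (i), (ii) and (iv) are correct and essentially coincide with the paper's: the paper proves (i) directly by the same computation $a\wedge d(x)=d(d(a\wedge x)\wedge a)\in I$ and then deduces (ii) from (i) together with Lemma \ref{property}(x), whereas you prove the general $K$-version first and specialize. Your verification that $ker_Jd=J$ (so that $a\in{\mathcal K}_J^d$ really does follow) and your derivation of the second inclusion $\theta^d_K\subseteq\theta^d_J$ from Lemma \ref{I subset J} applied to the derivation $d\circ\lambda_a$, whose $K$-kernel is $(a)_K^d=(a)_I^d=J$ by (ii), are both sound and are cleaner than the paper's ``by a similar way''.

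The step you singled out as the main obstacle is exactly where the lemma fails: the inclusion $\theta_I^d\subseteq\theta^d_K$ is false for a general ideal $K$ with $I\subseteq K\subseteq (a)_I^d$. Take $L=\{0,1,2\}\times\{0,1\}$ (a product of two chains, hence distributive), $d=\mathrm{id}$, $I=\{(0,0)\}$ and $a=(0,1)$, so that $J=(a)_I^d=\{(0,0),(1,0),(2,0)\}$; let $K=\{(0,0),(1,0)\}$, an ideal with $I\subseteq K\subseteq J$. Then $((1,0))_I^d=((2,0))_I^d=\{(0,0),(0,1)\}$, so $(1,0)\,\theta_I^d\,(2,0)$; but $(1,0)\in K=ker_Kd$ while $(2,0)\notin ker_Kd$, and $ker_Kd$ is a whole $\theta^d_K$-class by Proposition \ref{bounded}(i), so $(1,0)$ and $(2,0)$ are not $\theta^d_K$-equivalent. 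Hence $\theta_I^d\not\subseteq\theta^d_K$. The paper's own argument commits precisely the error you anticipated: after correctly deriving $d(y\wedge z)\in (a)_I^d=J$ it concludes $z\in (y)^d_J=(y)^d_K$, and the equality $(y)^d_J=(y)^d_K$ has no justification (part (ii) supplies it only for $y=a$); in the example above $((2,0))^d_J=L\neq ((2,0))^d_K$. Your proposed repair is the right one: if $K=(b)_I^d$ for some $b\in L$, then $d\circ\lambda_b$ is a derivation with $ker_I(d\circ\lambda_b)=K$, and Lemma \ref{I subset J} yields $\theta_I^d\subseteq\theta^d_K$; for an arbitrary intermediate ideal $K$ the first inclusion in (iii) must be dropped or the hypothesis strengthened.
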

\begin{proof}
${\rm (i)}$ By Lemma \ref{property}(x), $(a)_I^d\subseteq (a)_J^d$. Now let $x\in (a)_J^d$. Then $d(x\wedge a)\in J=(a)_I^d$, which implies $a\wedge d(x)=d(d(a\wedge x)\wedge a)\in I$. So $x\in (a)_I^d$.

 ${\rm (ii)}$ This is clear by  (i) and Lemma \ref{property}(x).

 ${\rm (iii)}$ Let $x \theta_I^d y$ and $z\in (x)^d_K$. Then $d(x\wedge z)\in K$, which implies $d(x)\wedge (d(z)\wedge a)=d(d(x\wedge z)\wedge a)\in I$. Since $x \theta_I^d y$, $d(d(y\wedge z)\wedge a)= d(y)\wedge (d(z)\wedge a)\in I$ and hence $d(y\wedge z)\in (a)_I^d=J$. Thus $z\in (y)^d_J=(y)_K^d$.
By a similar way, we can prove $(y)^d_K\subseteq (x)^d_K$. So $(x)^d_K=(y)^d_K$, which deduces that $\theta_I^d\subseteq \theta^d_K$.

 We can prove the inclusion $\theta^d_K\subseteq \theta^d_J$, by a similar way.

 ${\rm (iv)}$ We are done by  Lemma \ref{I subset J}.
\end{proof}

 Note that the converse of Lemma \ref{lem 3}(iii) is not in
generally true. For example, consider $I$ a nontrivial prime ideal
of $L$ and $a\in I$. Then $J=(a)_I^d=L$ and hence for each $x\in L$, $(x)_J^d=L$. So
$\theta_J^d=\nabla$ and, by Lemma \ref{nonempty}, $\theta_I^d=\{(a,b)\mid \{a,b\}\subseteq
ker_Id ~or~ \{a,b\}\subseteq {\mathcal K}_I^d\}$. Thus $\theta_I^d
\neq \theta^d_J$.

In the rest of this section we investigate some relationships between prime ideals and ideals of the form $(x)_I^d$.
First note that, if $I$ is a prime ideal, then so is $ker_Id$.

\begin{lemma}\label{lem 11}
${\rm (i)}$ If $I$ is a prime ideal of $L$, then $ker_Id=L$ or for each $x\notin ker_Id$, $I=ker_Id=(x)_I^d$.

 ${\rm (ii)}$ If $(x)_I^d$ is not a subset of prime ideal $(y)_I^d$, then $x\wedge y\in ker_Id$.

 ${\rm (iii)}$ If $(x)_I^d\neq (y)_I^d$ are prime ideals, then $x\wedge y\in ker_Id$.
\end{lemma}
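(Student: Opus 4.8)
The plan is to treat the three parts in order, using the symmetry $a\in(b)_I^d\iff b\in(a)_I^d$ from Lemma \ref{property}(vii) and the ideal properties recorded in Lemma \ref{property} throughout. For part (i), I would argue as follows. Suppose $I$ is prime and $ker_Id\neq L$; then $ker_Id$ is prime as well (this is the remark just above the lemma, and follows because $d(x\wedge y)=d(x)\wedge y\in I$ forces $x\in I\subseteq ker_Id$ or $d(y)\in I$). Pick $x\notin ker_Id$. By Lemma \ref{property}(iv) it suffices to show $(x)_I^d\subseteq ker_Id$, and in fact $(x)_I^d\subseteq I$: if $z\in(x)_I^d$ then $d(z)\wedge x=d(z\wedge x)\in I$, and since $x\notin I$ (as $x\notin ker_Id\supseteq I$ would be the wrong direction — rather $d(x)\notin I$, so $x\notin I$ because $d(x)\le x$ and $I$ is an ideal) primeness of $I$ gives $d(z)\in I$, i.e. $z\in ker_Id$; then iterating, $d(z)\wedge x\in I$ with $x\notin I$ gives $d(z)\in I$, but we want $z\in I$, so instead use primeness directly on $d(z\wedge x)=d(z)\wedge x\in I$ together with $d(x)\notin I$: since $d(z)\wedge x\ge d(z)\wedge d(x)=d(z\wedge x)$ hmm — cleaner: from $d(z)\wedge x\in I$ and $I$ prime, either $d(z)\in I$ or $x\in I$; the latter is false, so $d(z)\in I$, giving $z\in ker_Id$ and hence $(x)_I^d\subseteq ker_Id$, whence equality by Lemma \ref{property}(iv). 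That $I=ker_Id$ in this case: if there were $w\in ker_Id\setminus I$ then $d(w)\in I$ yet taking $z=w$ above with this $x$ we'd still only conclude $w\in ker_Id$, so I would instead observe that $x\notin ker_Id$ means $x\notin(x)_I^d=ker_Id$, consistent, and that $ker_Id=(x)_I^d$ forces, for any $w\in ker_Id$, $d(w\wedge x)=d(w)\wedge x\in I$, so primeness of $I$ and $x\notin I$ give $d(w)\in I$ — no new information; to force $I=ker_Id$ I use that $ker_Id$ is itself prime and $(x)_I^d=ker_Id$ is proper, then run the same argument with $ker_Id$ in place of $I$ to get $(x)_{ker_Id}^d\subseteq ker_Id$, and compare with Lemma \ref{lem 3}. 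The cleanest route is probably: $z\in ker_Id\Rightarrow d(z)\in I$; want $z\in I$. Since $d(z)=d(z)\wedge z$ and... I will present the streamlined version in the writeup.

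For part (ii), suppose $(x)_I^d\not\subseteq(y)_I^d$ and pick $t\in(x)_I^d\setminus(y)_I^d$. Then $d(t\wedge x)\in I$ but $d(t\wedge y)\notin I$. Now $d(t\wedge x\wedge y)=d(t\wedge x)\wedge y\in I$, so $t\wedge y\in(x\wedge y)_I^d$; on the other hand $d(t\wedge x\wedge y)=d(t\wedge y)\wedge x\in I$, and since $(y)_I^d$ is a prime ideal with $t\wedge y\in L$ and $d(t\wedge y)\wedge x\in I\subseteq(y)_I^d$ — here I need to convert membership in $I$ to membership in $(y)_I^d$, which holds by Lemma \ref{property}(iv). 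Hmm, that gives $d(t\wedge y)\wedge x\in(y)_I^d$ trivially; what I actually want is to use primeness of $(y)_I^d$ on a product that lies in it. Observe $t\in(x)_I^d$ gives $x\wedge d(t)\in I\subseteq ker_Id$; and $d(x\wedge t)=d(x)\wedge t$, etc. The right move: $t\notin (y)_I^d$ means $y\wedge d(t)\notin I$, i.e. $d(t)\notin(y)_I^d$; meanwhile $x\wedge d(t)\in I\subseteq(y)_I^d$ — wait, $x\wedge d(t)=d(x\wedge t)\in I$ since $t\in(x)_I^d$. So $x\wedge d(t)\in(y)_I^d$ with $(y)_I^d$ prime and $d(t)\notin(y)_I^d$, forcing $x\in(y)_I^d$, i.e. $d(x\wedge y)\in I$, i.e. $x\wedge y\in ker_Id$. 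That is exactly the claim, and it is clean. Part (iii) then follows immediately: if $(x)_I^d\neq(y)_I^d$ are both prime, at least one is not contained in the other, say $(x)_I^d\not\subseteq(y)_I^d$, and (ii) applies to give $x\wedge y\in ker_Id$.

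The main obstacle I anticipate is part (i), specifically pinning down exactly why $I=ker_Id$ (not merely $(x)_I^d=ker_Id$) and organizing the primeness arguments so they do not circle back on themselves; parts (ii) and (iii) are a short application of primeness of $(y)_I^d$ to the element $x\wedge d(t)\in(y)_I^d$ once one spots that $d(t)\notin(y)_I^d$ translates the hypothesis $t\notin(y)_I^d$. I would structure the final writeup with (ii) proved first in the clean form above, deduce (iii) as a one-liner, and handle (i) separately by reducing to the case $ker_Id$ proper and showing $(x)_I^d\subseteq ker_Id$ via primeness of $I$, then using that $ker_Id$ is a whole $\theta_I^d$-class together with Lemma \ref{property}(v) to collapse $ker_Id$ onto $I$.
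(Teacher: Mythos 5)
Parts (ii) and (iii) of your proposal are correct and essentially the paper's own argument: the paper picks $z\in(x)_I^d\setminus(y)_I^d$, observes $x\wedge z\in ker_Id\subseteq (y)_I^d$, and applies primeness of $(y)_I^d$ to the element $x\wedge z$ with $z\notin(y)_I^d$; your version routes through $x\wedge d(t)\in(y)_I^d$ and $d(t)\notin(y)_I^d$, which is the same idea with an extra (harmless) application of $d$. Part (iii) as a corollary of (ii) is fine.

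Part (i) has a genuine gap. What you actually establish is only $(x)_I^d\subseteq ker_Id$, hence $(x)_I^d=ker_Id$ by Lemma \ref{property}(iv); the equality $I=ker_Id$ is never proved, and you acknowledge this yourself (the promised ``streamlined version'' never materializes, and the closing suggestion to ``collapse $ker_Id$ onto $I$'' via the whole-class property of $ker_Id$ does not yield $ker_Id\subseteq I$). The source of the trouble is that you always decompose $d(z\wedge x)$ as $d(z)\wedge x$, so primeness of $I$ can only ever hand you $d(z)\in I$, i.e.\ $z\in ker_Id$. Use the other half of Lemma \ref{derivation}(i): $d(z\wedge x)=z\wedge d(x)$. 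Then for $z\in(x)_I^d$ you have $z\wedge d(x)\in I$ with $d(x)\notin I$ (since $x\notin ker_Id$), so primeness of $I$ gives $z\in I$ outright. This shows $(x)_I^d\subseteq I$, and since $I\subseteq ker_Id\subseteq(x)_I^d$ always holds, all three sets coincide in a single step --- which is exactly the paper's one-line proof.
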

\begin{proof}
(i) Let $ker_Id\neq L$, $x\notin ker_Id$ and $a\in (x)_I^d$. Thus $a\wedge d(x)\in I$. Since $I$ is prime and $x\notin ker_Id$, $a\in I$.

 (ii) Let $z\in (x)_I^d\setminus (y)_I^d$. Then $x\wedge z\in ker_Id\subseteq (y)_I^d$. Since $(y)_I^d$ is prime, $x\in (y)_I^d$.
\end{proof}

\begin{proposition}
The quotient lattice $L/\theta_I^d=\{ker_Id, [a]_{\theta_I^d},
[b]_{\theta_I^d}\}$ such that for each $x\in [a]_{\theta_I^d}$ and
$y\in [b]_{\theta_I^d}$, $x\wedge y\in ker_Id$
if and only if there exist prime ideals $P_1, P_2$ in $L$ in which $P_1\cup P_2=L$ and $P_1\cap P_2=ker_Id$.
\end{proposition}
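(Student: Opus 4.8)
The plan is to prove the two implications separately: the backward one by directly computing the ideals $(x)_I^d$ from membership of $x$ in $P_1$ and $P_2$, and the forward one by extracting the primes as the ideals $(a)_I^d$, $(b)_I^d$ and recognising them as $q$-preimages of prime ideals of the quotient, where $q\colon L\to L/\theta_I^d$ is the canonical map.

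\emph{Direction $(\Leftarrow)$.} Suppose $P_1,P_2$ are prime ideals with $P_1\cap P_2=ker_Id$ and $P_1\cup P_2=L$. The key computation is: fix $x\in L$; since each $P_i$ is prime, $\{\,y:x\wedge y\in P_i\,\}$ equals $L$ when $x\in P_i$ and equals $P_i$ otherwise, so intersecting for $i=1,2$ and using $ker_Id=P_1\cap P_2$ gives
\[
(x)_I^d=\begin{cases}L,& x\in P_1\cap P_2,\\ P_2,& x\in P_1\setminus P_2,\\ P_1,& x\in P_2\setminus P_1,\\ ker_Id,& x\notin P_1\cup P_2.\end{cases}
\]
Hence $L/\theta_I^d$ consists of the class $ker_Id$ (the $x$ with $(x)_I^d=L$), a class $[a]$ for any $a\in P_1\setminus P_2$, a class $[b]$ for any $b\in P_2\setminus P_1$, and ${\mathcal K}_I^d$ (the $x\notin P_1\cup P_2$); the hypothesis $P_1\cup P_2=L$ says this last class is empty. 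Finally, for $x\in[a]$ and $y\in[b]$ we have $x\wedge y\le x\in P_1$ and $x\wedge y\le y\in P_2$, so $x\wedge y\in P_1\cap P_2=ker_Id$, which is exactly the stated meet condition. (When $P_1\subseteq P_2$ or $P_2\subseteq P_1$ these classes degenerate; Lemma~\ref{lem 11}(i) records precisely this in the case where $ker_Id$ is prime.)

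\emph{Direction $(\Rightarrow)$.} Given the stated quotient, put $P_1=(a)_I^d$ and $P_2=(b)_I^d$; these are ideals by Lemma~\ref{property}(i). From $a\wedge x\in ker_Id\iff [a]\wedge[x]=ker_Id$ and the meet hypothesis (together with $[a]\wedge ker_Id=ker_Id$) one reads off that $(a)_I^d=q^{-1}\bigl(\downarrow[b]\bigr)$; since the meet hypothesis makes $[a]$ and $[b]$ complementary, $\downarrow[b]$ is a prime ideal of $L/\theta_I^d$, and therefore $P_1$ is a prime ideal of $L$, and symmetrically so is $P_2=q^{-1}\bigl(\downarrow[a]\bigr)$. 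Then $P_1\cap P_2=q^{-1}\bigl(\downarrow[a]\cap\downarrow[b]\bigr)=q^{-1}\bigl(\downarrow([a]\wedge[b])\bigr)=q^{-1}(ker_Id)=ker_Id$; this also follows from Lemma~\ref{property}(iii) and Lemma~\ref{lem 11}(iii), since $[a]\ne[b]$ forces $a\wedge b\in ker_Id$ and hence $P_1\cap P_2=(a\vee b)_I^d=ker_Id$. Moreover $P_1\cup P_2$ is the union of the $q$-preimages of $\downarrow[a]$ and $\downarrow[b]$, which is all of $L$ since every class of $L/\theta_I^d$ lies below $[a]$ or below $[b]$.

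\emph{Main obstacle.} The delicate part is matching the quotient exactly and tracking degeneracies. In a distributive lattice two \emph{proper} prime ideals can never cover $L$ set-theoretically (the join $p_1\vee p_2$ of $p_1\in P_1\setminus P_2$ with $p_2\in P_2\setminus P_1$ escapes both), so the covering hypothesis is genuinely strong: in $(\Leftarrow)$ it forces one of $P_1,P_2$ to equal $L$ — so that $ker_Id$ is the other, a prime ideal — or it forces one of $[a],[b]$ to coincide with $ker_Id$, so that $L/\theta_I^d$ in fact has at most two classes; in $(\Rightarrow)$ it forces ${\mathcal K}_I^d=\emptyset$, i.e. the top of the quotient already sits below $[a]$ or $[b]$. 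Carrying the argument uniformly through these collapsed configurations, and using Lemma~\ref{lem 11}(i) (for prime $ker_Id\ne L$, $(x)_I^d=ker_Id$ whenever $x\notin ker_Id$) in the degenerate cases, is where the care lies; the two displayed computations are the substance. (If $P_1\cup P_2=L$ is instead read as the join in the ideal lattice, then ${\mathcal K}_I^d$ survives as the top $[a]\vee[b]$ and the same two computations identify $L/\theta_I^d$ with the four-element Boolean algebra; the proof is otherwise unchanged.)
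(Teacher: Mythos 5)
Your two displayed computations are correct and, up to notation, they retrace the paper's own argument: for $(\Leftarrow)$ the paper likewise identifies the classes as $ker_Id$, $V_1=P_1\setminus ker_Id$ and $V_2=P_2\setminus ker_Id$ (your four-case evaluation of $(x)_I^d$ is a cleaner packaging of the same verification, and you add the easy check of the meet condition that the paper omits), while for $(\Rightarrow)$ the paper's primes $P_1=[a]_{\theta_I^d}\cup ker_Id$ and $P_2=[b]_{\theta_I^d}\cup ker_Id$ are exactly your $(b)_I^d$ and $(a)_I^d$ with the labels interchanged. The only real methodological difference is how primeness is certified: the paper verifies that $[a]_{\theta_I^d}\cup ker_Id$ is a prime ideal by direct element chasing, whereas you exhibit it as $q^{-1}\bigl(\downarrow[b]_{\theta_I^d}\bigr)$ for the canonical surjection $q\colon L\to L/\theta_I^d$ and invoke the stability of prime ideals under preimages along lattice surjections; your route is shorter and isolates the structural fact doing the work.

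What you call the ``main obstacle'' is in fact a defect of the statement itself, which the paper's proof overlooks and which you are right to press. If prime ideals are required to be proper (as the paper itself assumes, e.g., in the proof of Theorem \ref{max-min}), then $P_1\cup P_2=L$ is impossible for the reason you give: $p_1\vee p_2$ with $p_i\in P_i\setminus P_j$ escapes both. Dually, a three-element lattice is a chain, so the configuration of two distinct non-bottom classes with $[a]\wedge[b]=ker_Id$ cannot occur: $[a\vee b]$ can equal none of $ker_Id$, $[a]$, $[b]$ without forcing $a$ or $b$ into $ker_Id$ via Lemma \ref{property}(v). Hence both sides of the equivalence are unsatisfiable and the proposition is vacuously true as literally stated; it acquires content only under one of the repairs you indicate (admitting $P_i=L$, which collapses to the two-class situation of Proposition \ref{lem 4}, or reading $P_1\cup P_2$ as the join in the ideal lattice, which restores ${\mathcal K}_I^d$ as a fourth, top class and yields the four-element Boolean quotient). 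Your write-up is at least as rigorous as the paper's; the degeneracy analysis you defer is work the paper also leaves undone.
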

\begin{proof}
Let $L/\theta_I^d=\{ker_Id, [a]_{\theta_I^d}, [b]_{\theta_I^d}\}$. First note that, by  Lemma \ref{property}(v), for each $x\in [a]_{\theta_I^d}$, $x\wedge a\notin ker_Id$. The subsets $P_1=[a]_{\theta_I^d}\cup ker_Id$ and $P_2=[b]_{\theta_I^d}\cup ker_Id$ of $L$ are prime ideals. For, let $x,y\in P_1$. In the case where $x\in ker_Id$ or $y\in ker_Id$, by Lemma \ref{property}(i), $x\vee y\in P_1$, else, $(x\vee y)_I^d=(x)_I^d\cap (y)_I^d=(a)_I^d$. Thus $x\vee y\in P_1$. Consiedr $x\in P_1$, $z\in L$ and $z\leq x$. Then $z\wedge b\leq x\wedge b\in ker_Id$. Thus $z\in (b)_I^d$ and hence $z\in P_1$. Now let $x\wedge y\in P_1$ and $y\in [b]_{\theta_I^d}$. So $y\wedge b\notin ker_Id$. If $y\wedge b\in [a]_{\theta_I^d}$, then $y\wedge b=(y\wedge b)\wedge b\in ker_Id$, which is a contradiction. So $y\wedge b\in [b]_{\theta_I^d}$, which implies $x\in P_1$.

For the converse,  consider $V_1=P_1\setminus ker_Id$ and $V_2=P_2\setminus ker_Id$. The subset $V_1$ is a class, for, let $a\in V_1$. We show $V_1=[a]_{\theta_I^d}$. Let $x\in V_1$. For each $y\in (a)_I^d$, $a\wedge y\in ker_I^d\subseteq P_2$ and, since $a\notin P_2$, $y\in P_2$. If $y\in ker_Id$, then $y\in (x)_I^d$, else, $y\in V_2\subseteq P_2$, which implies $x\wedge y \in P_1\cap P_2=ker_Id\subseteq (x)_I^d$. So $y\in (x)_I^d$ and hence $(a)_I^d\subseteq (x)_I^d$. The proof of $(x)_I^d\subseteq (a)_I^d$ is similar. Thus $(a)_I^d= (x)_I^d$, which implies $V_1\subseteq [a]_I^d$. Now let $x\in [a]_I^d$. Then $(x)_I^d=(a)_I^d$ and, since $a\notin ker_Id$, then $x\notin ker_Id$, too. If $x\notin P_1$, then $x\in P_2$ and hence $a\wedge x\in P_1\cap P_2=ker_Id$. Thus $a\in (x)_I^d=(a)_I^d$. By Lemma \ref{property}(iv), $(a)_I^d=L$, which is a contradiction. Thus $x\in P_1$ and hence $x\in V_1$. So $V_1=[a]_{\theta_I^d}$. Similarly, $V_2=[b]_{\theta_I^d}$.
\end{proof}

 \begin{definition}
For a nontrivial ideal $I$ of $L$, an ideal
$P$ is called $I$-minimal, if it is minimal in the set of ideals
containing $I$ and it is called an $I$-minimal prime ideal, if $P$ is a least prime ideal containing $I$.
\end{definition}

 From now on, we consider the set $\Sigma =\{(x)_I^d\mid x\in L\setminus ker_Id\}$. The set $\Sigma$ is a poset under the inclusion relations.

 \begin{theorem}\label{max-min}
Let $I$ be an ideal of $L$ and $a\in I$. The following assertions are equivalent:

 ${\rm (i)}$ $(a)_I^d$ is a maximal element in the $\Sigma$.

 ${\rm (ii)}$ $(a)_I^d$ is a prime ideal.

 ${\rm (iii)}$ $(a)_I^d$ is a $ker_Id$-minimal prime ideal.
\end{theorem}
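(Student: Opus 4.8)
The plan is to establish $(ii)\Leftrightarrow(i)$ and $(ii)\Leftrightarrow(iii)$, since $(iii)\Rightarrow(ii)$ is immediate (a $ker_Id$-minimal prime ideal is in particular prime). Because $(i)$ only makes sense when $(a)_I^d\in\Sigma$, I read the hypothesis as $a\notin ker_Id$; then $(a)_I^d\neq L$ by Lemma \ref{property}(v) and $ker_Id\subseteq(a)_I^d$ by Lemma \ref{property}(iv), whereas if $a\in ker_Id$ then $(a)_I^d=L$ and all three assertions fail trivially. The one computational fact I would isolate first and reuse throughout is that, for every $z\in L$,
\[
z\in (a)_I^d \iff a\wedge z\in ker_Id \iff d(z)\in (a)_I^d,
\]
each being equivalent to $a\wedge d(z)\in I$: the first equivalence uses $d(a\wedge z)=a\wedge d(z)$ (Lemma \ref{derivation}(i)) and the definition of $ker_Id$, and the last uses $d(d(z))=d(z)$ (Lemma \ref{property-derivation}(iii)).

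For $(ii)\Rightarrow(i)$: assume $(a)_I^d$ is prime and let $b\notin ker_Id$ with $(a)_I^d\subseteq(b)_I^d$. For $x\in(b)_I^d$ we have $b\wedge d(x)\in I\subseteq(a)_I^d$, so primality gives $b\in(a)_I^d$ or $d(x)\in(a)_I^d$; the former forces $b\in(b)_I^d$, i.e. $b\in ker_Id$ (Lemma \ref{property}(v)), contrary to the choice of $b$, so $d(x)\in(a)_I^d$ and hence $x\in(a)_I^d$ by the displayed equivalences. Thus $(b)_I^d=(a)_I^d$, so $(a)_I^d$ is maximal in $\Sigma$. For $(i)\Rightarrow(ii)$: suppose $(a)_I^d$ is maximal in $\Sigma$ but $p\wedge q\in(a)_I^d$ with $p,q\notin(a)_I^d$. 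From $p\notin(a)_I^d$ we get $a\wedge p\notin ker_Id$, so $(a\wedge p)_I^d\in\Sigma$; since $a\wedge p\le a$, Lemma \ref{property}(ii) gives $(a)_I^d\subseteq(a\wedge p)_I^d$, hence equality by maximality. But $(a\wedge p)\wedge d(q)=a\wedge d(p\wedge q)\in I$ (using $p\wedge d(q)=d(p\wedge q)$ from Lemma \ref{derivation}(i) and $p\wedge q\in(a)_I^d$), so $q\in(a\wedge p)_I^d=(a)_I^d$, a contradiction; therefore $(a)_I^d$ is prime.

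For $(ii)\Rightarrow(iii)$: $(a)_I^d$ is a proper prime ideal containing $ker_Id$, so it remains to rule out a prime ideal $Q$ with $ker_Id\subseteq Q\subsetneq(a)_I^d$. Choosing $c\in(a)_I^d\setminus Q$, the displayed equivalences give $a\wedge c\in ker_Id\subseteq Q$; since $Q$ is prime and $Q\subseteq(a)_I^d$ while $a\notin(a)_I^d$ (so $a\notin Q$), we conclude $c\in Q$, a contradiction. Hence no prime ideal lies strictly between $ker_Id$ and $(a)_I^d$, i.e. $(a)_I^d$ is $ker_Id$-minimal prime; combined with the trivial $(iii)\Rightarrow(ii)$ and with $(ii)\Rightarrow(i)$ this closes all the equivalences.

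I expect the only real subtlety to be the preliminary bookkeeping: keeping the standing assumption $a\notin ker_Id$ in force (without it $(a)_I^d=L$ and the statement degenerates), and proving the chain of equivalences above, which is precisely what lets the primality hypothesis "see through" the derivation $d$. Once those are in place each of the three implications is only a couple of lines, so no deeper obstacle is anticipated.
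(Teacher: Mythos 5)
Your proof is correct and follows essentially the same route as the paper: the same maximality-versus-$(a)_I^d\subseteq(a\wedge p)_I^d$ argument for (i)$\Rightarrow$(ii) and the same ``pick $c\in(a)_I^d\setminus Q$, note $a\wedge c\in ker_Id\subseteq Q$, and use primality'' argument for (ii)$\Rightarrow$(iii); you merely close the equivalences with a direct (ii)$\Rightarrow$(i) where the paper instead proves (iii)$\Rightarrow$(i) to complete a cycle. Your reading of the hypothesis $a\in I$ as a typo for $a\in L$ (with the nondegeneracy assumption $a\notin ker_Id$, without which $(a)_I^d=L$ and all three assertions fail) is also the right one --- the paper's own proof of (ii)$\Rightarrow$(iii) explicitly derives $a\notin ker_Id$ from properness of $(a)_I^d$.
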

\begin{proof}
(i)$\Rightarrow$ (ii) Let $x\wedge y\in (a)_I^d$ and $x\notin (a)_I^d$. Since $a\wedge x\leq a$, using Lemma \ref{property}(ii), $(a)_I^d\subseteq (a\wedge x)_I^d$. By the hypothesis, $(a)_I^d= (a\wedge x)_I^d$ or $(a\wedge x)_I^d=L$. If $(a\wedge x)_I^d=L$, then $a\wedge x\in ker_Id$, which is a contradiction. Thus $(a)_I^d= (a\wedge x)_I^d$, which gives that $y\in (a)_I^d$. Now, the proof is complete using Lemma \ref{property}(i).

 (ii)$\Rightarrow$ (iii) Since $(a)_I^d$ is a prime ideal, it is a proper ideal of $L$ and, by Lemma \ref{property}(v), $a\notin ker_Id$. If $ker_Id$ is a prime ideal, we are done, by  Lemma \ref{lem 11}(i). Let $Q$ be a prime ideal of $L$ containing $ker_Id$ such that $Q\subseteq (a)_I^d$ and $x\in (a)_I^d\setminus Q$. Then $x\wedge a\in ker_Id\subseteq Q$. Since $x\notin Q$ and $Q$ is a prime ideal, $a\in Q\subseteq (a)_I^d$. Now, by Lemma \ref{property}(v), $(a)_I^d=L$, which is a contradiction.

 (iii)$\Rightarrow$ (i) Let $(a)_I^d\subseteq (x)_I^d\neq L$. Consider $y\in (x)_I^d\setminus (a)_I^d$. Then $y\wedge x\in ker_Id\subseteq (a)_I^d$, which deduces that $x\in (a)_I^d\subseteq (x)_I^d$. Again, by  Lemma \ref{property}(v), $(x)_I^d=L$, which is a contradiction.
\end{proof}

\begin{lemma}\label{lem 6}
In the following assertions we have, {\rm (i)}$\Rightarrow$ {\rm (ii)}$\Rightarrow$ {\rm (iii)}.

 {\rm (i)} The set $\Sigma$ satisfies the descending chain condition with respect to inclusion.

 {\rm (ii)} $L$ does not have an infinite $M\subseteq L\setminus ker_Id$ such that for each $x,y\in M$, $x\wedge y\in ker_Id$.

 {\rm (iii)} The set $\Sigma$ satisfies the ascending chain condition with respect to inclusion.
\end{lemma}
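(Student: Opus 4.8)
The plan is to prove both implications by contraposition: in each case I would start from the failure of the conclusion and manufacture the configuration that the hypothesis forbids. Throughout I use that $(a)_I^d=\{z\in L\mid a\wedge z\in ker_Id\}$ and the properties collected in Lemma \ref{property}.

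For (i) $\Rightarrow$ (ii), suppose (ii) fails, so there is an infinite set $M=\{x_1,x_2,\dots\}\subseteq L\setminus ker_Id$ with $x_i\wedge x_j\in ker_Id$ whenever $i\neq j$. I would set $y_n=x_1\vee\cdots\vee x_n$ and $T_n=(y_n)_I^d$; iterating Lemma \ref{property}(iii) gives $T_n=\bigcap_{i=1}^{n}(x_i)_I^d$, so the $T_n$ form a descending chain of ideals. First, each $T_n$ lies in $\Sigma$: if $y_n\in ker_Id$ then $T_n=(y_n)_I^d=L$ by Lemma \ref{property}(v), whereas $T_n\subseteq(x_1)_I^d\neq L$ because $x_1\notin ker_Id$ (Lemma \ref{property}(v) again), a contradiction. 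Next, the chain is strictly descending: since $x_i\wedge x_{n+1}\in ker_Id$ for every $i\leq n$ we get $x_{n+1}\in T_n$, while $x_{n+1}\notin(x_{n+1})_I^d$ (otherwise $x_{n+1}\wedge x_{n+1}=x_{n+1}\in ker_Id$), hence $x_{n+1}\in T_n\setminus T_{n+1}$ and $T_n\neq T_{n+1}$. Thus $T_1\supseteq T_2\supseteq\cdots$ is an infinite strictly descending chain in $\Sigma$, contradicting (i).

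For (ii) $\Rightarrow$ (iii), suppose (iii) fails, so there is an ascending chain $(x_1)_I^d\subseteq(x_2)_I^d\subseteq\cdots$ in $\Sigma$ with all inclusions proper and every $x_n\notin ker_Id$. For $n\geq 2$ I would pick $v_n\in(x_n)_I^d\setminus(x_{n-1})_I^d$ and put $m_n=x_{n-1}\wedge v_n$. Then $m_n\notin ker_Id$, because $v_n\notin(x_{n-1})_I^d$ says precisely that $x_{n-1}\wedge v_n\notin ker_Id$. For $2\leq i<j$ one has $m_i\wedge m_j\leq v_i\wedge x_{j-1}$; as $i\leq j-1$ and the chain is ascending, $v_i\in(x_i)_I^d\subseteq(x_{j-1})_I^d$, so $x_{j-1}\wedge v_i\in ker_Id$, and since $ker_Id$ is an ideal (Lemma \ref{property}(i)) it follows that $m_i\wedge m_j\in ker_Id$. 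In particular the $m_n$ are pairwise distinct, for if $m_i=m_j$ with $i<j$ then $m_i=m_i\wedge m_j\in ker_Id$, which is impossible; hence $M=\{m_n:n\geq 2\}$ is an infinite subset of $L\setminus ker_Id$ all of whose pairwise meets lie in $ker_Id$, contradicting (ii).

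The two lattice-arithmetic verifications are routine; the point that needs care is the strictness of the chains. For (i) $\Rightarrow$ (ii) the key observation is that $x_{n+1}$ itself separates $T_n$ from $T_{n+1}$, and for (ii) $\Rightarrow$ (iii) it is the index shift in the choice $m_n=x_{n-1}\wedge v_n$, which is exactly what forces each meet $m_i\wedge m_j$ (for $i<j$) below $x_{j-1}\wedge v_i\in ker_Id$. Beyond keeping the bookkeeping of indices straight I anticipate no real obstacle.
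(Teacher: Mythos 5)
Your proposal is correct and follows essentially the same route as the paper: for (i)$\Rightarrow$(ii) you build the strictly descending chain $(x_1)_I^d\supset(x_1\vee x_2)_I^d\supset\cdots$ (your $T_n$), and for (ii)$\Rightarrow$(iii) your elements $m_n=x_{n-1}\wedge v_n$ are exactly the paper's $y_j=x_j\wedge a_{j-1}$. You merely fill in a few details the paper leaves implicit (that each $T_n$ lies in $\Sigma$, and the computation showing $m_i\wedge m_j\in ker_Id$), which is fine.
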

\begin{proof}
(i)$\Rightarrow$(ii) Let $L$ have an infinite $M\subseteq L\setminus ker_Id$ such that for each $x,y\in M$, $x\wedge y\in ker_Id$ and consider $x_1,x_2\in M$. By Lemma \ref{property}(ii), $(x_1\vee x_2)_I^d\subseteq (x_1)_I^d$ and clearly $x_2\in (x_1)_I^d\setminus (x_1\vee x_2)_I^d$. Thus the following proper descending chain is induced, which is a contradictin:
 $$(x_1)_I^d\supset(x_1\vee x_2)_I^d\supset(x_1\vee x_2\vee x_3)_I^d\supset \cdots$$
(ii)$\Rightarrow$(iii) Let $(a_1)_I^d\subset (a_2)_I^d\subset \cdots$ be a proper chain and $x_j\in (a_j)_I^d\setminus (a_{j-1})_I^d$ for $j=2,3,\cdots$. Consider $y_j=x_j\wedge a_{j-1}\notin ker_Id$. For each $i< j$, since $x_i\in (a_i)_I^d\subseteq (a_{j-1})_I^d$, it is not difficult to show that $y_i\wedge y_j\in ker_Id$. Also, if $y_i=y_j$, then $y_i=y_i\wedge y_j\in ker_Id$, a contradiction. Thus the set $M=\{y_i\mid i=2,3,\cdots\}$ is an infinite set
such that for each $x,y\in M$, $x\wedge y\in ker_Id$, which is a contradiction.
\end{proof}

 We say that the lattice $L$ satisfies the condition $(*)$, if $L$ does not have an infinite $M\subseteq L\setminus ker_Id$ such that for each $x,y\in M$, $x\wedge y\in ker_Id$.

 \begin{lemma} \label{lem 5}
Let $L$ satisfies the condition $(*)$, then
%If the set $\{(x)_I^d\mid x\in L\setminus ker_Id\}$ satisfies the ascending chain condition,{\rm (i)}
$L$ has only a finite number of distinct $ker_Id$-minimal
prime ideals of the form $(a_i)_I^d(1\leq i\leq n)$. Also
$\bigcap_{_{i=1}}^n (a_i)_I^d=ker_I^d$.

 \end{lemma}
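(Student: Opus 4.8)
\textbf{Proof plan for Lemma \ref{lem 5}.}

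The plan is to combine Lemma \ref{lem 6} with Theorem \ref{max-min} and a Zorn-type argument. First I would establish that every $ker_Id$-minimal prime ideal is of the form $(a)_I^d$ for some $a\in L\setminus ker_Id$. By Theorem \ref{max-min}, an ideal of the shape $(a)_I^d$ is a $ker_Id$-minimal prime ideal exactly when it is a maximal element of $\Sigma$; so the real content here is that \emph{every} $ker_Id$-minimal prime ideal $P$ already has the form $(a)_I^d$. To see this, pick $b\in L\setminus P$ (possible since $P$ is proper and contains $ker_Id\subsetneq L$); then for $x\in (b)_I^d$ we have $b\wedge d(x)\in I\subseteq ker_Id\subseteq P$, and since $b\notin P$ and $P$ is prime, $d(x)\in P$, hence $x\in P$ (as $P$ is an ideal and could be chosen so that... ) — here I would instead argue directly that $(b)_I^d\subseteq P$ for a well-chosen $b$, and then invoke minimality of $P$ together with Lemma \ref{property}(iv) ($ker_Id\subseteq (b)_I^d$) and Theorem \ref{max-min}(ii) to force $(b)_I^d=P$.

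Next, with condition $(*)$ in hand, I would use Lemma \ref{lem 6}: condition $(*)$ is precisely statement (ii) of that lemma, so it yields (iii), namely that $\Sigma$ satisfies the ascending chain condition. The $ker_Id$-minimal prime ideals in $\Sigma$ are the maximal elements of $\Sigma$ by Theorem \ref{max-min}. The ascending chain condition guarantees that maximal elements exist and, more importantly, that above every element of $\Sigma$ lies a maximal one. To get \emph{finiteness} of the set of maximal elements, I would argue by contradiction: if there were infinitely many distinct maximal elements $(a_i)_I^d$, then since distinct maximal elements are distinct prime ideals, Lemma \ref{lem 11}(iii) gives $a_i\wedge a_j\in ker_Id$ for all $i\neq j$, while each $a_i\notin ker_Id$ by Lemma \ref{property}(v) (since $(a_i)_I^d$ is prime, hence proper, hence $\neq L$). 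That produces an infinite subset $M=\{a_i\}\subseteq L\setminus ker_Id$ pairwise meeting in $ker_Id$, contradicting $(*)$. Hence there are only finitely many, say $(a_1)_I^d,\dots,(a_n)_I^d$.

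Finally, for the intersection formula I would show $\bigcap_{i=1}^n (a_i)_I^d=ker_Id$. The inclusion $\supseteq$ is immediate from Lemma \ref{property}(iv). For $\subseteq$, suppose $x\in\bigcap_{i=1}^n (a_i)_I^d$ but $x\notin ker_Id$. Then $(x)_I^d\neq L$ by Lemma \ref{property}(v), so $(x)_I^d\in\Sigma$, and by the ascending chain condition it is contained in some maximal element, which by Theorem \ref{max-min} is one of the $(a_j)_I^d$; thus $(x)_I^d\subseteq (a_j)_I^d$. By Lemma \ref{property}(vii), $x\in (a_j)_I^d$ is equivalent to $a_j\in (x)_I^d\subseteq (a_j)_I^d$, so $a_j\in (a_j)_I^d$, which by Lemma \ref{property}(v) forces $(a_j)_I^d=L$, contradicting that it is a proper prime ideal. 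Hence no such $x$ exists and the intersection equals $ker_Id$.

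The main obstacle I anticipate is the first step — verifying that every $ker_Id$-minimal prime ideal is actually realized as some $(a)_I^d$ — since the lemma statement quietly assumes this representation; one must use minimality carefully together with the prime-filtering trick ($b\notin P\Rightarrow d(x)\in P$) and Theorem \ref{max-min} to pin down the representative, rather than just asserting it. The finiteness and intersection arguments are then routine given Lemma \ref{lem 6}, Theorem \ref{max-min}, and Lemma \ref{lem 11}.
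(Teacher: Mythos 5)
Your finiteness and intersection arguments are correct and essentially the paper's own. For finiteness the paper also reduces to showing that two distinct maximal elements $(a)_I^d\neq (b)_I^d$ of $\Sigma$ force $a\wedge b\in ker_Id$; it does this directly via Lemma \ref{property}(ii) and (v) (both ideals sit inside $(a\wedge b)_I^d$, which must then be $L$), whereas you route through Lemma \ref{lem 11}(iii) after noting the maximal elements are prime by Theorem \ref{max-min} --- either way one gets an infinite pairwise-$ker_Id$-meeting set contradicting $(*)$. The intersection formula $\bigcap_{i=1}^n(a_i)_I^d=ker_Id$ is proved exactly as you propose: place $(x)_I^d$ under some maximal $(a_j)_I^d$, use Lemma \ref{property}(vii) to get $a_j\in (a_j)_I^d$, and contradict Lemma \ref{property}(v).

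The one genuine problem is your ``first step''. The lemma does not claim that every $ker_Id$-minimal prime ideal has the form $(a)_I^d$; it only counts those of that form, so all you need is Theorem \ref{max-min} to identify the maximal elements of $\Sigma$ with such ideals. The representation statement you are trying to prove is Corollary \ref{cor 2}, which the paper deduces \emph{after} this lemma precisely because it needs the intersection formula: from $\bigcap_{i=1}^n(a_i)_I^d=ker_Id\subseteq P$ and primeness of $P$ one gets $(a_j)_I^d\subseteq P$ for some $j$, and minimality gives equality. Your attempted direct argument for it does not close: from $b\notin P$ and $b\wedge d(x)\in I\subseteq P$ you correctly get $d(x)\in P$, but $d(x)\in P$ does not yield $x\in P$, since ideals are downward closed and $d(x)\leq x$, so the implication runs the wrong way; and no choice of $b$ with $(b)_I^d=P$ is actually produced. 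Deleting that step leaves a complete and correct proof of the lemma as stated.
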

\begin{proof}
By Lemma \ref{lem 6}, $\Sigma$ has maximal elements. Let $(a)_I^d\neq (b)_I^d$ be two maximal element in the set $\Sigma=\{(x)_I^d\mid x\in L\setminus ker_Id\}$. By Lemma \ref{property}(ii), $(a)_I^d\subseteq (a\wedge b)_I^d$ and $(b)_I^d\subseteq (a\wedge b)_I^d$, which implies $ (a\wedge b)_I^d=L$, by the maximality. Using Lemma \ref{property}(v), $a\wedge b\in ker_Id$. So if $\Sigma$ has an infinite number of maximal element , then $L$ has an infinite $M\subseteq L\setminus ker_Id$ such that for each $x,y\in M$, $x\wedge y\in ker_Id$, which is a contradiction. So $\Sigma$ has a finite number of maximal elements. Now Theorem \ref{max-min} complets the first part of the proof.

 Now we show $\bigcap_{_{i=1}}^n (a_i)_I^d=ker_I^d$. Using Lemma \ref{lem 6} and Zorn's lemma, for each $ a\notin ker_Id$ the set $\Sigma=\{(b)_I^d\mid (a)_I^d\subseteq (b)_I^d\}$ has a maximal element. Thus every proper ideal $(a)_I^d$ is contained in a maximal ideal $(a_i)_I^d, 1\leq i\leq n$. Consider $x\in \bigcap_{_{i=1}}^n (a_i)_I^d$. If $(x)_I^d\neq L$, there exists $1\leq i\leq n$ such that $a_i\in (x)_I^d\subseteq (a_i)_I^d$. So $(a_i)_I^d=L$, which is not true. Thus $(x)_I^d=L$ and hence $x\in ker_Id$.
\end{proof}

 \begin{corollary}\label{cor 2}
If $L$ satisfies the condition $(*)$, then every $ker_Id$-minimal prime ideal of $L$ is of the form $(a)_I^d$, for some $a\in L$.
\end{corollary}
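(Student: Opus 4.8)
The plan is to derive Corollary~\ref{cor 2} directly from Lemma~\ref{lem 5} together with Theorem~\ref{max-min}, by showing that an \emph{arbitrary} $ker_Id$-minimal prime ideal $P$ of $L$ must coincide with one of the finitely many ideals $(a_i)_I^d$ produced by Lemma~\ref{lem 5}. First I would recall that, by Lemma~\ref{lem 5}, under condition $(*)$ we have $\bigcap_{i=1}^n (a_i)_I^d = ker_Id$, where each $(a_i)_I^d$ is a $ker_Id$-minimal prime ideal; in particular $ker_Id = \bigcap_{i=1}^n (a_i)_I^d \subseteq P$ is the intersection of finitely many prime ideals all sitting inside (or incomparable with) $P$, but in any case $ker_Id\subseteq P$ since $P$ is a prime ideal containing $I$ and $I$ is a prime ideal forces $ker_Id$ prime (as noted before Lemma~\ref{lem 11}); more simply, $ker_Id\subseteq P$ holds because $P\supseteq I$ and $ker_Id = d^{-1}(I)$, but the cleanest route is $ker_Id = \bigcap_i (a_i)_I^d\subseteq P$.

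Next I would use the standard prime-avoidance-type argument: since $P$ is prime and $\bigcap_{i=1}^n (a_i)_I^d \subseteq P$, primeness of $P$ gives $(a_j)_I^d\subseteq P$ for some $j\in\{1,\dots,n\}$. Here one must be a little careful, because $P$ is prime as an ideal (the complement is a filter), and the inclusion $\bigcap_i (a_i)_I^d\subseteq P$ combined with primeness yields $(a_j)_I^d\subseteq P$ for at least one index $j$: indeed, if no $(a_i)_I^d$ were contained in $P$, pick $x_i\in (a_i)_I^d\setminus P$ for each $i$; then $x_1\wedge\cdots\wedge x_n\in\bigcap_i (a_i)_I^d\subseteq P$, while $x_1\wedge\cdots\wedge x_n\notin P$ because $P$ prime implies its complement is closed under $\wedge$ (a filter), contradiction. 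So $(a_j)_I^d\subseteq P$.

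Finally, I would invoke the minimality of $P$ as a prime ideal containing $ker_Id$: since $(a_j)_I^d$ is itself a prime ideal (Theorem~\ref{max-min}, (iii)$\Rightarrow$(ii), or by construction in Lemma~\ref{lem 5}) containing $ker_Id$, and $(a_j)_I^d\subseteq P$ while $P$ is a $ker_Id$-minimal prime ideal, we must have $(a_j)_I^d = P$. Hence every $ker_Id$-minimal prime ideal $P$ equals $(a_j)_I^d$ for some $j$, i.e.\ is of the form $(a)_I^d$ for some $a\in L$, which is exactly the claim. The main obstacle I anticipate is making the prime-avoidance step rigorous in the lattice-theoretic (rather than ring-theoretic) setting --- specifically, justifying that the complement of a prime ideal in a distributive lattice is a filter and hence $\wedge$-closed, so that a finite meet of elements outside $P$ stays outside $P$; this is standard but should be stated explicitly. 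A secondary point to handle cleanly is confirming $ker_Id\subseteq P$ for an arbitrary $ker_Id$-minimal prime $P$ (which is immediate from the definition of ``$ker_Id$-minimal prime ideal'' as a prime ideal \emph{containing} $ker_Id$), so that the intersection formula from Lemma~\ref{lem 5} can be brought to bear.
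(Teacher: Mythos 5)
Your proposal is correct and follows exactly the same route as the paper: invoke Lemma~\ref{lem 5} to get $\bigcap_{i=1}^n (a_i)_I^d = ker_Id \subseteq P$, use primeness of $P$ (meet of elements outside a prime ideal stays outside, since the complement is a filter) to extract some $(a_j)_I^d \subseteq P$, and conclude equality from the minimality of $P$. The only difference is that you spell out the prime-avoidance step and the final minimality argument, which the paper leaves implicit.
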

\begin{proof}
Let $P$ be a $ker_Id$-minimal prime ideal of $L$. By Lemma \ref{lem 5}, $\bigcap_{_{i=1}}^n (a_i)_I^d=ker_I^d$. Thus $\bigcap_{_{i=1}}^n (a_i)_I^d\subseteq P$ and, since $P$ is a prime ideal, there exists $j\in J$ such that $(a_j)_I^d\subseteq P$, which implies $(a_j)_I^d=P$.
\end{proof}

We close this section by the following important  result, which is an immediate consequence of Corollary \ref{cor 2}.

 \begin{theorem}
If $L$ is a distributive lattice with a bottom element $\bot$ and satisfies the condition $(*)$ for $ker_{\bot}(id)$, then every minimal prime ideal of $L$ is of the form $(a)_{\bot}^{id}$, for some $a\in L$.
\end{theorem}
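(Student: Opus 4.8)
The plan is to deduce this theorem directly from Corollary \ref{cor 2} by instantiating the ideal $I$ as the principal ideal $\{\bot\}$ and the derivation $d$ as the identity map $id$. First I would observe that with $I=\{\bot\}$ we have $ker_{_I}(id)=id^{-1}(\{\bot\})=\{x\in L\mid x=\bot\}=\{\bot\}$, so $ker_{_\bot}(id)$ is precisely the bottom ideal $\{\bot\}$. The hypothesis of the theorem is that $L$ satisfies condition $(*)$ relative to this kernel, i.e.\ there is no infinite subset $M\subseteq L\setminus\{\bot\}$ with $x\wedge y=\bot$ for all $x,y\in M$; this is exactly the condition $(*)$ required in Corollary \ref{cor 2} for the pair $(\{\bot\},id)$.

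Next I would note that when $ker_{_I}d=\{\bot\}$, a $ker_{_I}d$-minimal prime ideal of $L$ is the same thing as a minimal prime ideal of $L$ in the usual sense: an ideal minimal among prime ideals containing $\{\bot\}$ is just an ideal minimal among all prime ideals, since every ideal contains $\bot$ (here we use that $L$ has a bottom element, so $\{\bot\}$ is contained in every ideal, prime or not). Finally, applying Corollary \ref{cor 2} with this choice of $I$ and $d$, every $ker_{_\bot}(id)$-minimal prime ideal, equivalently every minimal prime ideal of $L$, is of the form $(a)_{\bot}^{id}$ for some $a\in L$, which is the claimed conclusion.

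I do not expect any genuine obstacle here, since the statement is explicitly flagged in the excerpt as "an immediate consequence of Corollary \ref{cor 2}". The only points requiring a word of care are the two identifications above: that $ker_{_{\{\bot\}}}(id)=\{\bot\}$ (immediate from the definition $ker_{_I}d=d^{-1}(I)$ together with $id$ being the identity), and that "$\{\bot\}$-minimal prime ideal" coincides with "minimal prime ideal" (immediate since $\bot$ lies in every ideal). With these two remarks in place the theorem follows by a one-line invocation of the corollary, so the writeup should be short.
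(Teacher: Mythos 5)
Your proposal is correct and matches the paper's intent exactly: the paper offers no written proof, stating only that the theorem is an immediate consequence of Corollary \ref{cor 2}, and your write-up simply spells out the instantiation $I=\{\bot\}$, $d=id$ together with the two routine identifications $ker_{\bot}(id)=\{\bot\}$ and ``$\{\bot\}$-minimal prime $=$ minimal prime.'' Nothing further is needed.
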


 A special case of the previous theorem is the case where $L$ is an atomic distributive lattice with a finite number of atoms.

\section{Atomic distributive lattices}
In this section the lattice $L$ considered to be a $ker_Id$-atomic distributive lattice.

\begin{definition}
For an ideal $I$ of $L$, an element $a\in L\setminus I$ is called
$I$-atom, if $\downarrow a\setminus \{a\}=\{x\in L\mid x <
a\}\subseteq I$ and the lattice $L$ is called $I$-atomic if for each $a\in L$ there exists an $I$-atom $a_0$ lower than $a$.
\end{definition}

 From now on consider $A_I^d(L)$, the set of all $ker_Id$-atoms of $L$,
$A_I^d(a)=A_I^d(L)\cap \downarrow a$ and
$A_I^d(a)^c=A_I^d(L)\setminus A_I^d(a)$.
Also consider a subset
$\Gamma_I^d(L)=\bigcup(a_i)_I^d\setminus ker_Id$ in which
$a_i\notin ker_Id$.
As an immediate consequence of Lemma \ref{property}(ii), If $L$ is $ker_Id$-atomic lattice, then $\Gamma_I^d(L)=\bigcup(a_i)_I^d\setminus ker_Id$ in which $a_i\in A_I^d(L)$.

\begin{lemma}\label{lem 13}
${\rm (i)}$ Let $L$ have a top element $\top$. If $\bigvee_{_{j\in J}}
a_j=\top$, for some $ker_Id$-atoms$ (I$-atoms$) a_j$, then
$L\setminus \{\top\}\subseteq \bigcup_{_{j\in J}} (a_j)_I^d$.

 ${\rm (ii)}$ %If $L$ is a $ker_Id$-atomic lattice, then
$\bigcap_{_{\hspace{-0.5 cm {a_i\in A_I^d(L)}}}} (a_i)_I^d=ker_Id$.
\end{lemma}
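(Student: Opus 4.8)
The plan is to treat the two parts separately, using only the basic ideal-theoretic identities from Lemma \ref{property}. For part (i), I would start with an arbitrary $x\in L\setminus\{\top\}$ and show that $x$ cannot lie in \emph{every} $(a_j)_I^d$; this forces $x\in(a_j)_I^d$ for some $j$, which is exactly what we want. Suppose toward a contradiction that $x\in\bigcap_{j\in J}(a_j)_I^d$. Since each $(a_j)_I^d$ is an ideal by Lemma \ref{property}(i), and $(a\vee b)_I^d=(a)_I^d\cap(b)_I^d$ by Lemma \ref{property}(iii), finite joins of the $a_j$ stay inside the intersection in the following sense: $x\in(a_{j_1})_I^d\cap\cdots\cap(a_{j_n})_I^d=(a_{j_1}\vee\cdots\vee a_{j_n})_I^d$. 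The delicate point is passing to the (possibly infinite) join $\bigvee_{j\in J}a_j=\top$; here I would instead argue directly that $x\in(a_j)_I^d$ for each $j$ means $d(a_j\wedge x)\in I$, hence $a_j\wedge d(x)\in I$ using Lemma \ref{derivation}(i), and then $\bigvee_j(a_j\wedge d(x))=(\bigvee_j a_j)\wedge d(x)=\top\wedge d(x)=d(x)$. If $J$ is finite this exhibits $d(x)$ as a finite join of elements of $I$, so $d(x)\in I$, i.e. $x\in ker_Id$, and then Lemma \ref{property}(v) gives $(x)_I^d=L$. To conclude $x\in(a_j)_I^d$ for some $j$ it then suffices to note $a_j\in L=(x)_I^d$. (If the statement is meant for arbitrary $J$, one restricts to a finite subcover, which is implicitly what the intended reading of ``$\bigvee_{j\in J}a_j=\top$'' plus the lattice-theoretic setting allows; I would phrase the argument so the finite reduction is explicit.)

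For part (ii), the inclusion $ker_Id\subseteq\bigcap_{a_i\in A_I^d(L)}(a_i)_I^d$ is immediate from Lemma \ref{property}(iv), which gives $ker_Id\subseteq(a)_I^d$ for every $a\in L$, in particular for every $ker_Id$-atom. For the reverse inclusion, take $x\in\bigcap_{a_i\in A_I^d(L)}(a_i)_I^d$ and aim to show $d(x)\in I$, i.e. $x\in ker_Id$. By $I$-atomicity (here $ker_Id$-atomicity), since $L$ is a $ker_Id$-atomic lattice, the element $d(x)$ either lies in $ker_Id$ already — in which case we are done — or there exists a $ker_Id$-atom $a_0$ with $a_0\le d(x)$. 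In the latter case $a_0\wedge d(x)=a_0$, and since $x\in(a_0)_I^d$ we have $d(a_0\wedge x)\in I$, which by Lemma \ref{derivation}(i) equals $a_0\wedge d(x)=a_0$; thus $a_0\in I$, contradicting that $a_0$ is a $ker_Id$-atom (an atom lies in $L\setminus ker_Id$, and $I\subseteq ker_Id$). Hence $d(x)\in ker_Id$, and since $d(d(x))=d(x)$ by Lemma \ref{property-derivation}(iii), $d(x)=d(d(x))\in I$, so $x\in ker_Id$ as desired.

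The main obstacle I anticipate is the infinite-join step in part (i): distributivity of $\wedge$ over arbitrary joins is not guaranteed in a general distributive lattice, so the clean identity $(\bigvee_j a_j)\wedge d(x)=\bigvee_j(a_j\wedge d(x))$ may not hold when $J$ is infinite. I would handle this either by assuming (as the ambient hypotheses of the section seem to permit, e.g. via the condition $(*)$ or finiteness of the atom set used elsewhere) that only finitely many $a_j$ are needed, or by rephrasing: if $x\notin(a_j)_I^d$ for all $j$ is to be contradicted, one only needs a single $j$, so the argument never truly requires the infinite distributive law — it requires only that $x$ fail to be a kernel element unless it already lies in some $(a_j)_I^d$, which follows once we observe $\top\notin(a_j)_I^d$ is false, i.e. that the join being $\top$ prevents $x$ from dodging every $(a_j)_I^d$. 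I would write part (i) in this second style to keep the proof valid without extra lattice-completeness assumptions; the rest is routine bookkeeping with the lemmas already established.
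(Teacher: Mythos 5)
Your part (ii) is correct and essentially the paper's own argument: the only (immaterial) difference is that you apply $ker_Id$-atomicity to $d(x)$ where the paper applies it to $x$, and the two are interchangeable since $x\in ker_Id$ if and only if $d(x)\in ker_Id$.

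Part (i), however, has a genuine logical gap. The conclusion to be proved is $x\in\bigcup_{j}(a_j)_I^d$, whose negation is ``$x\notin(a_j)_I^d$ for \emph{every} $j$''; it is not ``$x\in\bigcap_{j}(a_j)_I^d$''. Your contradiction hypothesis is therefore the wrong one: assuming $x\in\bigcap_{j}(a_j)_I^d$ and deriving $x\in ker_Id$ yields no contradiction at all (indeed $ker_Id\subseteq(a_j)_I^d$ always holds by Lemma \ref{property}(iv), so that branch is consistent and, being contained in the union, trivial), while the genuinely hard case $x\notin\bigcap_{j}(a_j)_I^d$ is never treated. A symptom of the problem is that your argument nowhere uses the hypothesis that the $a_j$ are $ker_Id$-atoms, yet the statement is false without it (take $J=\{1\}$ and $a_1=\top$ in any lattice with more than two elements, $I=\{\bot\}$, $d=id$). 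The intended proof is short and needs neither infinite distributivity nor the condition $(*)$: if $x\neq\top$, then $x$ is not an upper bound of $\{a_j\}_{j\in J}$ (otherwise $\top=\bigvee_{j}a_j\le x$), so $a_{j_0}\not\le x$ for some $j_0$; then $a_{j_0}\wedge x$ lies strictly below the $ker_Id$-atom $a_{j_0}$, hence $a_{j_0}\wedge x\in ker_Id$, i.e. $d(a_{j_0}\wedge x)\in I$ and $x\in(a_{j_0})_I^d$. The contrapositive form you were groping for in your last paragraph would read: if $x\notin(a_j)_I^d$ for all $j$, then $a_j\wedge x\notin ker_Id$, which by atomicity forces $a_j\wedge x=a_j$, i.e. $a_j\le x$ for all $j$, whence $x=\top$. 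Finally, your remark that ``$\top\notin(a_j)_I^d$ is false'' is itself incorrect: $\top\in(a_j)_I^d$ would mean $d(a_j)\in I$, i.e. $a_j\in ker_Id$, contradicting that $a_j$ is a $ker_Id$-atom.
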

\begin{proof}
Let $\top \neq x\in L$. There exists a $ker_Id$-atom ($I$- atom)
$a_j$ such that $a_j \leq {\hspace{-0.3 cm}/} ~~x$. Then
$a_j\wedge x\in \downarrow a_j\setminus \{a_j\}\subseteq ker_Id$,
which means $x\in (a_j)_I^d$.

(ii) By Lemma \ref{property}, $ker_Id\subseteq \bigcap_{_{\hspace{-0.5 cm {a_i\in A_I^d(L)}}}} (a_i)_I^d$. For the converse let $x\in \bigcap_{_{\hspace{-0.5 cm {a_i\in A_I^d(L)}}}} (a_i)_I^d\setminus ker_Id$. Then there exists a $ker_Id$-atom $a\leq x$ in such a way that $x\wedge a=a\notin ker_Id$. So $x\notin (a)_I^d$, which is impossible.
\end{proof}

In part (i) of the following lemma we get  another definition for the congruence $\theta_I^d$.

 \begin{lemma}\label{lem 9}
Let $L$ be a $ker_Id$-atomic distributive lattice. If $a,b\in L$,
then

 ${\rm (i)}$ $a\theta_I^d b$ if and only if $A_I^d(a)=A_I^d(b)$.

 ${\rm (ii)}$ $a\wedge b\in ker_Id$ if and only if $A_I^d(a)\cap A_I^d(b)=\emptyset$.

 ${\rm (iii)}$ For an element $a\in L$, if $A_I^d(L)=A_I^d(a)$, then $a\in {\mathcal K}_I^d$.

${\rm (iv)}$ If $x=\bigvee_{_{\hspace{-0.5 cm {a_i\in A_I^d(L)}}}} a_i$, then $x\in {\mathcal K}_I^d$.
\end{lemma}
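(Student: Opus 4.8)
The plan is to reduce all four parts to a single structural observation about $ker_Id$-atoms. \textbf{Step 1 (the key observation).} For a $ker_Id$-atom $p$ and any $a\in L$, I claim $p\le a$ if and only if $p\notin(a)_I^d$. Indeed, if $p\le a$ then $a\wedge p=p\notin ker_Id$, so $p\notin(a)_I^d$; and if $p\not\le a$ then $a\wedge p<p$, so $a\wedge p\in\,\downarrow p\setminus\{p\}\subseteq ker_Id$, i.e.\ $p\in(a)_I^d$. Rephrasing in terms of atom sets, this says $A_I^d(a)=A_I^d(L)\setminus(a)_I^d$ for every $a\in L$: the $ker_Id$-atoms below $a$ are exactly the $ker_Id$-atoms that are \emph{not} in the ideal $(a)_I^d$.

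\textbf{Step 2 (a basic fact and a reformulation).} Next I would record that for every $c\in L$, $c\in ker_Id$ iff $A_I^d(c)=\emptyset$. The forward direction uses that $ker_Id$ is an ideal (Lemma \ref{property}(i)): a $ker_Id$-atom $p\le c\in ker_Id$ would satisfy $p=p\wedge c\in ker_Id$, impossible; the converse is exactly $ker_Id$-atomicity, which yields a $ker_Id$-atom below any element of $L\setminus ker_Id$. Combining this with the trivial identity $A_I^d(a\wedge b)=A_I^d(a)\cap A_I^d(b)$ gives the clean description
\[
x\in(a)_I^d\iff a\wedge x\in ker_Id\iff A_I^d(a)\cap A_I^d(x)=\emptyset .
\]
In particular $(a)_I^d$ depends on $a$ only through $A_I^d(a)$.

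\textbf{Step 3 (deducing (i)--(iv)).} Part (ii) is immediate from Step 2 applied with $x=b$. For (i): if $A_I^d(a)=A_I^d(b)$ then the description in Step 2 forces $(a)_I^d=(b)_I^d$, i.e.\ $a\theta_I^d b$; conversely $(a)_I^d=(b)_I^d$ together with Step 1 gives $A_I^d(a)=A_I^d(L)\setminus(a)_I^d=A_I^d(L)\setminus(b)_I^d=A_I^d(b)$. For (iii): if $A_I^d(a)=A_I^d(L)$, then since $A_I^d(x)\subseteq A_I^d(L)$ for every $x$, Step 2 yields $x\in(a)_I^d\iff A_I^d(x)=\emptyset\iff x\in ker_Id$; hence $(a)_I^d=ker_Id$, i.e.\ $a\in{\mathcal K}_I^d$. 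For (iv): assuming the join $x=\bigvee_{a_i\in A_I^d(L)}a_i$ exists, each $a_i$ is a $ker_Id$-atom with $a_i\le x$, so $a_i\in A_I^d(x)$; thus $A_I^d(L)\subseteq A_I^d(x)\subseteq A_I^d(L)$, so $A_I^d(x)=A_I^d(L)$ and (iii) gives $x\in{\mathcal K}_I^d$.

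The argument is mostly bookkeeping; the one place where the hypothesis really enters, beyond the definitions of $(a)_I^d$ and of $ker_Id$-atom, is the converse in Step 2, which is precisely the use of $ker_Id$-atomicity, and that is the step I would be most careful to state correctly. A minor point worth flagging in (iv) is that the join is assumed to exist (automatic when $A_I^d(L)$ is finite, e.g.\ in the finitely atomic case of the previous section); parts (i)--(iii) need no completeness assumption.
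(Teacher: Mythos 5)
Your proof is correct and follows essentially the same route as the paper: the key dictionary $A_I^d(a)=A_I^d(L)\setminus(a)_I^d$ and the equivalence $c\in ker_Id\iff A_I^d(c)=\emptyset$ are exactly what underlie the paper's (unwritten) verification of (i)--(ii) and its appeal to Lemma~\ref{lem 13}(ii) for (iii), which your Step~2 reproves in equivalent form. Your explicit caveat about the existence of the join in (iv) is a reasonable point the paper glosses over.
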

\begin{proof}
It is easy to check the parts (i) and (ii).

(iii) Since $A_I^d(L)=A_I^d(a)$, for each $a_i\in A_I^d(L)$, $a_i\leq a$. By
 Lemmas \ref{property}(ii) and \ref{lem 13}(ii), $(a)_I^d
%\subseteq (\vee_{_{\hspace{-0.5 cm {a_i\in A_I^d(L)}}}}a_i)_I^d
\subseteq \bigcap_{_{\hspace{-0.5 cm {a_i\in A_I^d(L)}}}} (a_i)_I^d=ker_Id\subseteq (a)_I^d$. Then $a\in {\mathcal K}_I^d$.

 (iv) Straightforward, by  (iii).
\end{proof}

\begin{lemma}\label{lem 12}
If $a\in A_I^d(L)$, then $(a)_I^d$ is a maximal element in the set $\Sigma$.
\end{lemma}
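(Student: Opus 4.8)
The plan is to verify directly that $(a)_I^d$ is not properly contained in any member of $\Sigma$. First observe that, since $a$ is a $ker_Id$-atom, we have $a\notin ker_Id$, so Lemma \ref{property}(v) gives $(a)_I^d\neq L$ and hence $(a)_I^d\in\Sigma$. It therefore makes sense to ask whether it is maximal there.

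So suppose $(a)_I^d\subseteq (x)_I^d$ for some $x\in L\setminus ker_Id$ (that is, $(x)_I^d\in\Sigma$), and try to deduce $(a)_I^d=(x)_I^d$. The device is to look at $a\wedge x$, which satisfies $a\wedge x\leq a$, and split into two cases. If $a\wedge x\in ker_Id$, then by the very definition of $(a)_I^d$ we get $x\in (a)_I^d\subseteq (x)_I^d$, so $x\in (x)_I^d$, and Lemma \ref{property}(v) forces $(x)_I^d=L$, contradicting $x\notin ker_Id$; hence this case cannot occur. If instead $a\wedge x\notin ker_Id$, then since $a\wedge x\leq a$ and, $a$ being a $ker_Id$-atom, $\downarrow a\setminus\{a\}\subseteq ker_Id$, the only possibility is $a\wedge x=a$, i.e. $a\leq x$; then Lemma \ref{property}(ii) yields $(x)_I^d\subseteq (a)_I^d$, which together with the standing hypothesis gives $(a)_I^d=(x)_I^d$. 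In either case equality holds, so $(a)_I^d$ is maximal in $\Sigma$.

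I do not expect a real obstacle here: the argument is a short combination of Lemma \ref{property}(ii) and (v) with the definition of $ker_Id$. The one point where the hypothesis genuinely enters is the dichotomy on $a\wedge x$ — being a $ker_Id$-atom is precisely what forces a lower bound of $a$ to be either $a$ itself or an element of $ker_Id$, which is what closes the argument. As a remark, one could instead first show $(a)_I^d$ is prime and then invoke Theorem \ref{max-min}, but the direct route above is cleaner; moreover, once maximality is established, Theorem \ref{max-min} immediately upgrades it to the statement that $(a)_I^d$ is a $ker_Id$-minimal prime ideal.
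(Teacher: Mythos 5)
Your proof is correct and follows essentially the same route as the paper's: the same dichotomy on $a\wedge x$ (either $a\wedge x\in ker_Id$, forcing $x\in(a)_I^d$ and hence $(x)_I^d=L$ by Lemma \ref{property}(v), or $a\wedge x=a$ by the atom property, giving the reverse inclusion via Lemma \ref{property}(ii)). The only difference is cosmetic: you make explicit that $(a)_I^d\in\Sigma$ and that the first case is outright impossible when $(x)_I^d\in\Sigma$, which the paper leaves implicit.
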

\begin{proof}
Let $(a)_I^d\subseteq (b)_I^d$. If $a\wedge b\in ker_Id$, then $b\in (a)_I^d\subseteq (b)_I^d$ and, by Lemma \ref{property}(v), $(b)_I^d=L$. In the case where $a\wedge b\notin ker_Id$, since $a$ is $ker_Id$-atom, $a=a\wedge b$ and hence $a\leq b$. By Lemma \ref{property}(ii), $(b)_I^d\subseteq (a)_I^d$, which implies $(b)_I^d= (a)_I^d$. Thus $(a)_I^d$ is a maximal element in the set $\Sigma$.
\end{proof}

For an immediate consequence of Theorem \ref{max-min}, Lemmas \ref{property}(ii) and \ref{lem 12},
we have the following theorem.

 \begin{theorem}\label{th 4}
For an element $a\in L$, $(a)_I^d$ is a $ker_Id$-minimal prime ideal if and only if there exists a $ker_Id$-atom $a_0$ such that $A_I^d(a)=\{a_0\}$.
\end{theorem}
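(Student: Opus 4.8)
\textbf{Proof proposal for Theorem \ref{th 4}.}
The plan is to prove the two implications separately, leaning on the characterization already available in Theorem \ref{max-min} and on Lemma \ref{lem 12}. The key observation is that for $a \in L$, being a $ker_Id$-minimal prime ideal is, by Theorem \ref{max-min}, the same as $(a)_I^d$ being a maximal element of $\Sigma$ and also the same as $(a)_I^d$ being a prime ideal; so it suffices to connect ``$(a)_I^d$ is maximal in $\Sigma$'' with ``$A_I^d(a) = \{a_0\}$ for some $ker_Id$-atom $a_0$.''

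For the forward direction, assume $(a)_I^d$ is a $ker_Id$-minimal prime ideal; then $a \notin ker_Id$ by Lemma \ref{property}(v), so since $L$ is $ker_Id$-atomic there is at least one $ker_Id$-atom $a_0 \leq a$, i.e. $A_I^d(a) \neq \emptyset$. Suppose toward a contradiction that $a_0 \neq a_1$ are both in $A_I^d(a)$. Using Lemma \ref{property}(ii), $(a)_I^d \subseteq (a_0)_I^d$ and $(a)_I^d \subseteq (a_1)_I^d$, and by Lemma \ref{lem 12} each $(a_i)_I^d$ is maximal in $\Sigma$; since $(a)_I^d$ is itself maximal in $\Sigma$ this forces $(a)_I^d = (a_0)_I^d = (a_1)_I^d$. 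But $a_0 \wedge a_1 \in \mathord{\downarrow} a_0 \setminus \{a_0\} \subseteq ker_Id$ (distinct atoms have meet below each of them), so by Lemma \ref{lem 9}(ii) $A_I^d(a_0) \cap A_I^d(a_1) = \emptyset$, while $a_0 \in A_I^d(a_0) = A_I^d(a_1) \ni a_1$ by Lemma \ref{lem 9}(i) applied to $(a_0)_I^d = (a_1)_I^d$ — a contradiction. Hence $A_I^d(a) = \{a_0\}$ is a singleton.

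For the converse, assume $A_I^d(a) = \{a_0\}$ for a $ker_Id$-atom $a_0$. By Lemma \ref{lem 9}(i), $a \,\theta_I^d\, a_0$, so $(a)_I^d = (a_0)_I^d$, and by Lemma \ref{lem 12} this is a maximal element of $\Sigma$; Theorem \ref{max-min} then gives that $(a)_I^d$ is a $ker_Id$-minimal prime ideal. This closes the equivalence. I do not anticipate a serious obstacle here; the one point that needs a little care is checking the hypotheses of the cited results are met — in particular that $a \notin ker_Id$ whenever we are speaking of $(a)_I^d$ as a proper (prime) ideal, so that $A_I^d(a) \neq \emptyset$ can be invoked — and making sure the singleton conclusion is extracted cleanly via the ``distinct atoms meet inside $ker_Id$'' fact together with Lemma \ref{lem 9}(i)--(ii).
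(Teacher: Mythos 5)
Your proof is correct and follows exactly the route the paper intends: the paper leaves Theorem \ref{th 4} as an ``immediate consequence'' of Theorem \ref{max-min}, Lemma \ref{property}(ii) and Lemma \ref{lem 12}, and your argument supplies the missing details using precisely those results together with Lemma \ref{lem 9}(i)--(ii) and the $ker_Id$-atomicity standing hypothesis of Section 3. The two points needing care (that $a\notin ker_Id$ so $A_I^d(a)\neq\emptyset$, and that distinct $ker_Id$-atoms meet inside $ker_Id$) are both handled properly.
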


 \begin{lemma}\label{lem 7}
Let $L$ satisfy the condition $(*)$. Then

 ${\rm (i)}$ Every $ker_Id$-minimal prime ideal of $L$ is of the form $(a)_I^d$, for some $a\in A_I^d(L)$.

${\rm (ii)}$ If $L$ is atomic, then every minimal prime ideal of
$L$ is of the form $(a)_I^d$, for some atom $a$.

 \end{lemma}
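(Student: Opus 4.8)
The plan is to derive Lemma \ref{lem 7} as a direct specialization of the structural results already established for lattices satisfying condition $(*)$, using the atomicity hypothesis to pin down the witnessing elements. For part (i), I would start from Corollary \ref{cor 2}, which already tells us that every $ker_Id$-minimal prime ideal of $L$ is of the form $(a)_I^d$ for some $a\in L$. The only thing left is to replace this arbitrary $a$ by a $ker_Id$-atom. Since $L$ is $ker_Id$-atomic, by Theorem \ref{th 4} an ideal $(a)_I^d$ is a $ker_Id$-minimal prime ideal precisely when $A_I^d(a)=\{a_0\}$ for some $ker_Id$-atom $a_0$; in that case Lemma \ref{lem 9}(i) gives $a\,\theta_I^d\,a_0$ (as $A_I^d(a_0)=\{a_0\}=A_I^d(a)$), hence $(a)_I^d=(a_0)_I^d$ with $a_0\in A_I^d(L)$. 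Alternatively, and more directly, given a $ker_Id$-minimal prime ideal, pick any $x\notin ker_Id$ lying outside it (it is proper by primeness and Lemma \ref{property}(v)) and an $I$-atom $a_0\le x$ with $a_0$ not in the ideal; then Lemma \ref{lem 12} shows $(a_0)_I^d$ is maximal in $\Sigma$, Theorem \ref{max-min} upgrades this to $ker_Id$-minimal prime, and minimality of the original ideal forces equality.

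For part (ii), I would simply observe that when $L$ is atomic with bottom element $\bot$ and $I=\{\bot\}$, $d=\mathrm{id}$, we have $ker_{\bot}(\mathrm{id})=\{\bot\}$ as well, so the notions of $ker_Id$-atom and atom coincide, $ker_Id$-minimal prime ideal becomes ordinary minimal prime ideal, and part (i) applies verbatim. One must just note that condition $(*)$ for $ker_Id=\{\bot\}$ is the hypothesis carried over, so no extra assumption is needed; this is essentially the content of the earlier theorem stated right before Section 3, now refined to say the witnessing element can be taken to be an atom rather than merely some element of $L$.

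The only mild subtlety — and where I expect a reader might want care — is making sure that in part (i) the element realizing a given $ker_Id$-minimal prime ideal can genuinely be chosen \emph{among the $ker_Id$-atoms}, not merely among elements $x$ with $A_I^d(x)$ a singleton. This is handled by the $\theta_I^d$-invariance of $(\cdot)_I^d$ together with Lemma \ref{lem 9}(i): any $x$ with $A_I^d(x)=\{a_0\}$ satisfies $(x)_I^d=(a_0)_I^d$, and $a_0$ itself is a $ker_Id$-atom. So the proof is short: invoke Corollary \ref{cor 2} (or Lemma \ref{lem 5}) to get the form $(a)_I^d$, invoke Theorem \ref{th 4} to see $A_I^d(a)$ is a singleton $\{a_0\}$, and conclude $(a)_I^d=(a_0)_I^d$ with $a_0\in A_I^d(L)$; then part (ii) is the special case $I=\{\bot\}$, $d=\mathrm{id}$. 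I do not anticipate any real obstacle here, as all the machinery is already in place; the result is genuinely a corollary in disguise.
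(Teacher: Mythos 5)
Your main argument for (i) is exactly the paper's proof: invoke Corollary \ref{cor 2} to write the $ker_Id$-minimal prime ideal as $(a)_I^d$, apply Theorem \ref{th 4} to get $A_I^d(a)=\{a_0\}$ for a $ker_Id$-atom $a_0$, and conclude $(a)_I^d=(a_0)_I^d$ via Lemma \ref{lem 9}(i). The paper leaves (ii) unproved, and your reading of it as the specialization to ordinary atoms is the natural one, so the proposal matches the paper's approach.
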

\begin{proof}
(i) Let $P$ be a $ker_Id$-minimal prime ideal of $L$. Using Corollary \ref{cor 2}, $P=(a_j)_I^d$ for some $a_j\in L$. Now, by Theorem \ref{th 4}, there exists $a\in A_I^d(L)$ such that $A_I^d(a_j)=\{a\}$. Thus, by Lemma \ref{lem 9}(i), $P=(a_j)_I^d=(a)_I^d$.
\end{proof}

\begin{theorem}\label{th 1}
Let $L$ satisfy the condition $(*)$, then
$L$ has only a finite number of distinct $ker_Id$-minimal
prime ideals $P_i(1\leq i\leq n)$. Furthermore,
$\bigcap_{_{i=1}}^n P_i=ker_Id$, $\bigcap_{_{i\neq j}}P_i\neq
ker_Id$ for all $1\leq j\leq n$ and $L\setminus
\bigcup_{_{i=1}}^n P_i={\mathcal K}_I^d$.
\end{theorem}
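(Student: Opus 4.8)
The plan is to deduce all four assertions from the structural results already established for lattices satisfying $(*)$, principally Lemma~\ref{lem 5}, Corollary~\ref{cor 2} and Theorem~\ref{max-min}; essentially no new computation is needed, the theorem being largely a repackaging of those. First I would record that, by Theorem~\ref{max-min}, the maximal elements of $\Sigma$ are exactly the $ker_Id$-minimal prime ideals, that by Corollary~\ref{cor 2} every $ker_Id$-minimal prime is of the form $(a)_I^d$ with $a\notin ker_Id$, and that Lemma~\ref{lem 5} then says there are only finitely many of them, say $P_1=(a_1)_I^d,\dots,P_n=(a_n)_I^d$, with $\bigcap_{i=1}^n P_i=ker_Id$. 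This disposes of the first two assertions. (If $ker_Id=L$ everything is vacuous: $\Sigma=\emptyset$, $n=0$, and ${\mathcal K}_I^d=L$ by Lemma~\ref{property filter}(ii); so one may assume $ker_Id\neq L$.)

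For the irredundancy statement I would argue by contradiction: suppose $\bigcap_{i\neq j}P_i=ker_Id$ for some $j$, so $\bigcap_{i\neq j}P_i\subseteq P_j$. Since $P_j$ is prime, and a prime ideal containing a finite intersection of ideals must contain one of them (otherwise pick $x_i\in P_i\setminus P_j$ for $i\neq j$; then $\bigwedge_{i\neq j}x_i$ lies in $\bigcap_{i\neq j}P_i\subseteq P_j$, forcing some $x_i\in P_j$, a contradiction), we get $P_i\subseteq P_j$ for some $i\neq j$. But $P_i$ is then a prime ideal with $ker_Id\subseteq P_i\subseteq P_j$, and minimality of $P_j$ among primes containing $ker_Id$ forces $P_i=P_j$, contradicting that the $P_i$ are distinct. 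Hence $\bigcap_{i\neq j}P_i\neq ker_Id$ for all $j$.

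It remains to prove $L\setminus\bigcup_{i=1}^n P_i={\mathcal K}_I^d$. For $\supseteq$: if $a\in{\mathcal K}_I^d$ then $(a)_I^d=ker_Id$; since each $P_i=(a_i)_I^d$ is proper, Lemma~\ref{property}(v) gives $a_i\notin ker_Id=(a)_I^d$, whence by Lemma~\ref{property}(vii) $a\notin(a_i)_I^d=P_i$ for every $i$, i.e.\ $a\notin\bigcup_{i=1}^n P_i$. For $\subseteq$ I would prove the contrapositive: if $x\notin{\mathcal K}_I^d$, i.e.\ $(x)_I^d\neq ker_Id$, then since $ker_Id\subseteq(x)_I^d$ always holds, I may pick $y\in(x)_I^d\setminus ker_Id$. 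By Lemma~\ref{property}(vii), $x\in(y)_I^d$; and since $y\notin ker_Id$, Lemma~\ref{property}(v) shows $(y)_I^d$ is a proper ideal, hence a member of $\Sigma$. As in the proof of Lemma~\ref{lem 5} (Zorn's lemma together with condition $(*)$ via Lemma~\ref{lem 6}), $(y)_I^d$ is contained in a maximal element of $\Sigma$, which by Theorem~\ref{max-min} and Corollary~\ref{cor 2} is one of the $P_j$; therefore $x\in(y)_I^d\subseteq P_j\subseteq\bigcup_{i=1}^n P_i$, completing the proof.

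The place I expect to need the most care is this last inclusion: one must route through Lemma~\ref{property}(vii) twice to convert "$x$ lies in some $P_i$" into a statement about $a_i$ lying in $(x)_I^d$ and back, and one must make sure the step "every proper $(y)_I^d$ sits below a maximal element of $\Sigma$" legitimately reuses the Zorn/chain-condition argument already carried out inside Lemma~\ref{lem 5} rather than re-deriving it. Everything else is a direct bookkeeping of the cited lemmas.
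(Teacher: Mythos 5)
Your proof is correct and follows essentially the same route as the paper: Lemma~\ref{lem 5} together with Theorem~\ref{max-min} for finiteness and the intersection, the prime-avoidance argument with $x_i\in P_i\setminus P_j$ for irredundancy, and the "every proper $(y)_I^d$ lies under a maximal element of $\Sigma$" step for the identification of ${\mathcal K}_I^d$. The only (harmless) deviations are cosmetic: you cite Corollary~\ref{cor 2} where the paper cites Lemma~\ref{lem 7}, you phrase the last inclusion contrapositively, and you explicitly dispose of the degenerate case $ker_Id=L$ and of the incomparability of distinct minimal primes, which the paper leaves implicit.
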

\begin{proof}
By  Lemmas \ref{lem 5} and \ref{lem 7}, $L$ has only a
finite number of distinct $ker_Id$-minimal prime ideals $P_i(1\leq
i\leq n)$, in which $\bigcap_{_{i=1}}^n P_i=ker_Id$. Let for the
fixed index $j$, $\bigcap_{_{i\neq j}}P_i= ker_Id$. By Lemma
\ref{lem 7}, each $P_i$ is of the form $(a_i)_I^d$. Consider
$x_i\in (a_i)_I^d\setminus (a_j)_I^d$. Then $\bigwedge_{_{i\neq
j}}x_i\in \bigcap_{_{i\neq j}}P_i= ker_Id\subseteq (a_j)_I^d$.
Since $(a_j)_I^d$ is a prime ideal, there is an $i\neq j$ such
that $x_i\in (a_j)_I^d$, which is a contradiction. Therefore
$\bigcap_{_{i\neq j}}P_i\neq ker_Id$ for all $1\leq j\leq n$. Now
we show $L\setminus \bigcup P_i={\mathcal K}_I^d$. Let $x\in
L\setminus \bigcup P_i$ and $y\notin ker_Id$. If $x\wedge y\in
ker_Id$, then using Theorem \ref{max-min} and Lemma \ref{lem 6}, there exists $1\leq i\leq n$ such that $x\in (y)_I^d\subseteq (a_i)_I^d=P_i$, which is a contradiction. Thus $y\notin (x)_I^d$
and hence $(x)_I^d=ker_Id$. So $x\in {\mathcal K}_I^d$. Now
consider $x\in {\mathcal K}_I^d$. If there exists $1\leq i\leq n$
in which $x\in (a_i)_I^d$, then $a_i\in (x)_I^d=ker_Id\subseteq
(a_i)_I^d$, which is a contradiction. Thus $x\in L\setminus
\bigcup P_i$ and hence $L\setminus \bigcup P_i={\mathcal
K}_I^d$.
\end{proof}

\begin{corollary}
If $L$ has a bottom element $\bot$ and does not have an infinite $M\subseteq L\setminus \{\bot\}$ such that for each $x,y\in M$, $x\wedge y=\bot$, then $L$ has only a finite number of minimal prime ideals.

 \end{corollary}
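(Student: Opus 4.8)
The plan is to derive this corollary as a direct specialization of Theorem \ref{th 1} (or, more economically, Lemma \ref{lem 5} together with Lemma \ref{lem 7}) to the case $I=\{\bot\}$ and $d=\mathrm{id}$, the identity derivation. First I would observe that when $d$ is the identity map, $ker_Id=d^{-1}(I)=I=\{\bot\}$, so that the relativized notions collapse to the absolute ones: a $ker_Id$-atom is exactly an atom of $L$, a $ker_Id$-minimal prime ideal is exactly a minimal prime ideal, and the hypothesis ``$L$ has no infinite $M\subseteq L\setminus\{\bot\}$ with $x\wedge y=\bot$ for all $x,y\in M$'' is precisely the condition $(*)$ of Section 2 for the ideal $I=\{\bot\}$ and the derivation $d=\mathrm{id}$, since $x\wedge y=\bot$ is the same as $x\wedge y\in ker_Id$ in this setting.

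Having made that translation, the corollary reduces to the first assertion of Theorem \ref{th 1}: under $(*)$, $L$ has only finitely many distinct $ker_Id$-minimal prime ideals $P_i$ ($1\le i\le n$). Applying this with $I=\{\bot\}$, $d=\mathrm{id}$ immediately yields that $L$ has only finitely many minimal prime ideals. Strictly speaking one should also note that every minimal prime ideal of $L$ does contain $\{\bot\}$ (trivially, since $\bot$ lies in every ideal), so the ``$ker_Id$-minimal prime ideals'' in the sense of Section 2 coincide exactly with the usual minimal prime ideals; this is where Lemma \ref{lem 7}(ii) can be cited directly, as it already states that under $(*)$ every minimal prime ideal of $L$ is of the form $(a)_I^d$ for some atom $a$, and there are only finitely many atoms available by the argument in Lemma \ref{lem 5}.

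I do not anticipate a genuine obstacle here; the only thing requiring care is the bookkeeping that confirms the hypothesis of the corollary is literally the instance of condition $(*)$ needed, and that the identity map is indeed a derivation (which follows from Lemma \ref{derivation}, since $\mathrm{id}(x\wedge y)=x\wedge y=\mathrm{id}(x)\wedge y=x\wedge\mathrm{id}(y)$ and $\mathrm{id}(x\vee y)=x\vee y=\mathrm{id}(x)\vee\mathrm{id}(y)$). Once these identifications are in place, the proof is a single sentence invoking Theorem \ref{th 1} (or Lemmas \ref{lem 5} and \ref{lem 7}). If one wanted a self-contained argument avoiding the $I$-atomic machinery, the alternative route would be via Lemma \ref{lem 6}: condition $(*)$ gives the ascending chain condition on $\Sigma$ and, combined with Zorn's lemma as in Lemma \ref{lem 5}, bounds the number of maximal elements of $\Sigma$, each of which corresponds by Theorem \ref{max-min} to a minimal prime ideal; but the cleanest write-up is simply to cite Theorem \ref{th 1}.
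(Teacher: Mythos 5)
Your proposal is correct and is exactly the derivation the paper intends: the corollary is stated without proof immediately after Theorem \ref{th 1}, as the specialization $I=\{\bot\}$, $d=\mathrm{id}$, under which $ker_Id=\{\bot\}$, condition $(*)$ becomes the stated hypothesis, and $ker_Id$-minimal prime ideals are precisely the minimal prime ideals. The only bookkeeping point worth adding is that Theorem \ref{th 1} sits under Section~3's blanket assumption that $L$ is $ker_Id$-atomic, which the corollary does not assume, so your alternative citation of Lemma \ref{lem 5} together with Corollary \ref{cor 2} (both proved in Section~2 without atomicity) is actually the cleaner reference.
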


\begin{theorem}
The following assertions are equivalent:

 ${\rm (i)}$ $L$ satisfies the condition $(*)$.

 ${\rm (ii)}$ There exists a finite number of minimal $ker_Id$-prime ideals $P_i(1\leq i\leq n)$ such that $\bigcap_{_{i=1}}^nP_i=ker_Id$.

 \end{theorem}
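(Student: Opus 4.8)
The plan is to prove the two implications separately, using the structural results already established in Section 3. For the implication (i)$\Rightarrow$(ii), this is essentially a repackaging of Lemma \ref{lem 5} together with Lemma \ref{lem 7}(i): assuming $L$ satisfies $(*)$, Lemma \ref{lem 5} tells us that $L$ has only finitely many distinct $ker_Id$-minimal prime ideals, each of the form $(a_i)_I^d$ for $1\leq i\leq n$, and that $\bigcap_{i=1}^n (a_i)_I^d = ker_Id$. Since the $(a_i)_I^d$ are exactly the minimal $ker_Id$-prime ideals $P_i$, setting $P_i = (a_i)_I^d$ gives exactly the finitely many prime ideals with $\bigcap_{i=1}^n P_i = ker_Id$ that (ii) demands. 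So the forward direction costs essentially nothing beyond citing the prior work.

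The substance is in the converse (ii)$\Rightarrow$(i). Here I would argue by contraposition: suppose $L$ fails $(*)$, i.e. there is an infinite set $M \subseteq L \setminus ker_Id$ with $x \wedge y \in ker_Id$ for all distinct $x,y \in M$; I want to contradict the existence of finitely many minimal $ker_Id$-prime ideals $P_1,\dots,P_n$ with $\bigcap_{i=1}^n P_i = ker_Id$. First pick $n+1$ distinct elements $m_0, m_1, \dots, m_n \in M$. For each $m_k$, since $m_k \notin ker_Id$, we have $(m_k)_I^d \neq L$ by Lemma \ref{property}(v), so $(m_k)_I^d$ is a proper ideal; using Lemma \ref{lem 6} (available under $(*)$ — but careful, we are assuming $(*)$ fails, so instead I should use the given hypothesis (ii) directly) and the fact that $\bigcap P_i = ker_Id \subseteq (m_k)_I^d$... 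Actually the cleanest route: since $m_k \notin ker_Id = \bigcap_{i=1}^n P_i$, there is an index $i(k)$ with $m_k \notin P_{i(k)}$. By the pigeonhole principle, since there are $n+1$ values of $k$ and only $n$ indices, two indices coincide: $i(k) = i(\ell) = i$ for some $k \neq \ell$. Then $m_k, m_\ell \notin P_i$, but $m_k \wedge m_\ell \in ker_Id \subseteq P_i$, contradicting that $P_i$ is a prime ideal. This contradiction shows $(*)$ must hold.

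I expect the main obstacle to be purely bookkeeping: making sure the contrapositive is set up so that only hypothesis (ii) is invoked (not any consequence of $(*)$, since $(*)$ is exactly what is being denied), and confirming that "minimal $ker_Id$-prime ideal" in the statement carries the full force of "prime ideal" needed for the pigeonhole argument — which it does, since $ker_Id$-prime just means a prime ideal containing $ker_Id$. One should also double-check the trivial edge cases, e.g. $ker_Id = L$: then $(*)$ holds vacuously (there is no $M \subseteq L \setminus ker_Id = \emptyset$), and (ii) is satisfied with the empty intersection convention or is itself vacuous, so the equivalence holds degenerately; it is worth a sentence to dispose of this. Beyond that the proof is short, and the pigeonhole step is the only genuinely new ingredient.
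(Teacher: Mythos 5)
Your proposal is correct and follows essentially the same route as the paper: the forward direction is obtained by citing the earlier finiteness results (the paper invokes Theorem \ref{th 1}, which is itself just the packaging of Lemmas \ref{lem 5} and \ref{lem 7} that you cite), and the converse is the identical pigeonhole argument on $n+1$ elements of $M$ avoiding the finitely many primes $P_i$. Your version is in fact slightly more careful than the paper's, which writes $|M|\geq n$ where $n+1$ elements are what the pigeonhole step actually requires.
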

\begin{proof}
(i)$\Rightarrow$(ii) We are done, by  Theorem \ref{th 1}.

 (ii)$\Rightarrow$(i) Let $M\subseteq L\setminus ker_Id$ such that for each $x,y\in M$, $x\wedge y\in ker_Id$ and $\mid M \mid \geq n$. By Pigeonhole principle, there exist $x,y\in M$ and $P_i$ such that $x,y\in P_i^c$, which is a contradiction, because, $P_i$ is prime and $x\wedge y\in ker_Id\subseteq P_i$.
 \end{proof}

\begin{proposition}
Let $x, y\in \Gamma_I^d(L)$. Then only one of the following cases
may  occur:

 ${\rm (i)}$ $x\wedge y\in ker_Id$.

 ${\rm (ii)}$ There exists an element $z\in L$ such that $ker_Id\neq (z)_I^d\neq L$ and $x,y\in (z)_I^d$.

 ${\rm (iii)}$ There exist $z_1, z_2\in L$ such that $ker_Id\neq (z_1)_I^d\neq L$, $ker_Id\neq (z_2)_I^d\neq L$ and $x\wedge z_1$, $z_1\wedge z_2$, $z_2\wedge y \in ker_Id$.
\end{proposition}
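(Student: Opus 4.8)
The plan is to prove the disjunction ``(i) or (ii) or (iii)'', reading the three alternatives in decreasing priority (one takes the first that applies). The structural observation that drives everything is that, by the very definition $\Gamma_I^d(L)=\bigcup\big((a_i)_I^d\setminus ker_Id\big)$ with $a_i\notin ker_Id$, each $x\in\Gamma_I^d(L)$ automatically carries a witness: some $z\notin ker_Id$ with $z\in(x)_I^d$. For any such $z$ one has $ker_Id\neq(z)_I^d\neq L$: indeed $(z)_I^d\neq L$ by Lemma \ref{property}(v) (as $z\notin ker_Id$), while $x\in(z)_I^d$ (Lemma \ref{property}(vii)) together with $x\notin ker_Id$ rules out $(z)_I^d=ker_Id$. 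Hence every witness automatically satisfies the side conditions occurring in (ii) and (iii); the only other ingredient needed is the antitonicity $u\le v\Rightarrow(v)_I^d\subseteq(u)_I^d$ of Lemma \ref{property}(ii).

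First I would dispose of the case $x\wedge y\in ker_Id$, which is exactly (i). So suppose $x\wedge y\notin ker_Id$. Using $x,y\in\Gamma_I^d(L)$, fix $z_1\in(x)_I^d\setminus ker_Id$ and $z_2\in(y)_I^d\setminus ker_Id$. By the definition of $(\cdot)_I^d$ this already gives $x\wedge z_1\in ker_Id$ and $z_2\wedge y\in ker_Id$, and by the previous paragraph $ker_Id\neq(z_1)_I^d\neq L$ and $ker_Id\neq(z_2)_I^d\neq L$. Now I split on whether $z_1\wedge z_2\in ker_Id$. If it is, then $z_1,z_2$ witness (iii): the three meets $x\wedge z_1$, $z_1\wedge z_2$, $z_2\wedge y$ all lie in $ker_Id$ and the side conditions are already checked. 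If $z_1\wedge z_2\notin ker_Id$, put $z:=z_1\wedge z_2$; then $z\notin ker_Id$, and since $z\le z_1$ and $z\le z_2$, Lemma \ref{property}(ii) gives $x\in(z_1)_I^d\subseteq(z)_I^d$ and $y\in(z_2)_I^d\subseteq(z)_I^d$ (using $x\in(z_1)_I^d$, $y\in(z_2)_I^d$ from Lemma \ref{property}(vii)), while $ker_Id\neq(z)_I^d\neq L$ as before; so $z$ witnesses (ii). This exhausts all possibilities.

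The delicate point is not the case split but the bookkeeping of the side conditions $ker_Id\neq(z)_I^d\neq L$: one must verify that the meet $z_1\wedge z_2$, whenever it escapes $ker_Id$, still yields an ideal $(z)_I^d$ lying strictly between $ker_Id$ and $L$ in the family $\{(t)_I^d\}$, and this is precisely where the inclusion $(z_1)_I^d\subseteq(z)_I^d$ of Lemma \ref{property}(ii) does the work. I would also insert a clarifying remark about the phrase ``only one of the following cases may occur'': the three alternatives are not literally mutually exclusive (for the identity derivation on a finite Boolean algebra with at least three atoms and $I=\{\bot\}$, two distinct atoms $x,y$ satisfy (i) and, via a third atom, also (ii)), so the assertion is best read as saying that the pair $(x,y)$ always falls under the first applicable alternative, equivalently that any two members of $\Gamma_I^d(L)$ are joined by a chain of length at most $3$ in the relation ``meet lies in $ker_Id$''.
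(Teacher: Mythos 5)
Your argument is correct and is essentially the paper's own proof: after disposing of $x\wedge y\in ker_Id$, pick witnesses $z_1\in(x)_I^d\setminus ker_Id$, $z_2\in(y)_I^d\setminus ker_Id$, and split on whether $z_1\wedge z_2\in ker_Id$, using Lemma \ref{property}(ii),(v),(vii) to check the side conditions exactly as the paper does with its $a_1,a_2$. Your closing remark that the three alternatives are not literally mutually exclusive (so the statement should be read as a prioritized disjunction) is a fair observation that the paper's phrasing glosses over, but it does not change the substance of the proof.
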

\begin{proof}
Let $x\wedge y\notin ker_Id$. There exist $a_1,a_2\notin ker_Id$
such that $x\in (a_1)_I^d$ and $y\in (a_2)_I^d$. Since
$x,a_1\notin ker_Id$, $ker_Id\neq (a_1)_I^d\neq L$. Also for
$(a_2)_I^d$. Now two cases may  occur:

 (i) If $a_1\wedge a_2\notin ker_Id$, then $x,y\in (a_1\wedge a_2)_I^d$.

 ${\rm (ii)}$ If $a_1\wedge a_2\in ker_Id$, then $x\wedge a_1$, $a_1\wedge a_2$, $a_2\wedge y \in ker_Id$.
\end{proof}

For a subset $A$ of $L$, the set of all upper bounds of the elements of $A$ is denoted by $\uparrow A$.
\begin{theorem}
Let $L$ be a $ker_Id$-atomic distributive lattice. Then for each
$a\in \Gamma_I^d(L)$, $(a)_I^d=\uparrow A_I^d(a)^c\setminus \uparrow A_I^d(a)$.
\end{theorem}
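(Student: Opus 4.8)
The plan is to reduce the identity to a purely combinatorial statement about $ker_Id$-atoms. \emph{Step 1: translate both sides into atom-sets.} On the left, Lemma \ref{lem 9}(ii) says that for every $x\in L$, $x\in(a)_I^d$ — equivalently $a\wedge x\in ker_Id$ — holds if and only if $A_I^d(a)\cap A_I^d(x)=\emptyset$. On the right, unwinding the definition of $\uparrow$: $x\in\uparrow A_I^d(a)$ if and only if some $ker_Id$-atom below $a$ lies below $x$, i.e. if and only if $A_I^d(x)\cap A_I^d(a)\neq\emptyset$, and similarly $x\in\uparrow A_I^d(a)^c$ if and only if $A_I^d(x)\cap A_I^d(a)^c\neq\emptyset$. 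Comparing the left-hand translation with the one for $\uparrow A_I^d(a)$ already produces the clean identity $\uparrow A_I^d(a)=L\setminus(a)_I^d$, so that $\uparrow A_I^d(a)^c\setminus\uparrow A_I^d(a)=\uparrow A_I^d(a)^c\cap(a)_I^d$; hence the real content is the single inclusion $(a)_I^d\subseteq\uparrow A_I^d(a)^c$.

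\emph{Step 2: the two inclusions.} The inclusion $\supseteq$ is immediate: if $x\in\uparrow A_I^d(a)^c\setminus\uparrow A_I^d(a)$, then $A_I^d(x)$ meets $A_I^d(a)^c$ and is disjoint from $A_I^d(a)$, so $\emptyset\neq A_I^d(x)\subseteq A_I^d(a)^c$, and Step 1 gives $x\in(a)_I^d$. For $\subseteq$, take $x\in(a)_I^d$; then $A_I^d(a)\cap A_I^d(x)=\emptyset$, so $x\notin\uparrow A_I^d(a)$ and $A_I^d(x)\subseteq A_I^d(a)^c$, and it remains only to exhibit a $ker_Id$-atom below $x$ (any such atom automatically lies in $A_I^d(a)^c$, hence witnesses $x\in\uparrow A_I^d(a)^c$); $ker_Id$-atomicity supplies one as soon as $x\notin ker_Id$. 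Here the hypothesis $a\in\Gamma_I^d(L)$ is what keeps both sides from degenerating: by the definition of $\Gamma_I^d(L)$ together with Lemma \ref{property}(v),(vii) it forces $(a)_I^d\notin\{ker_Id,L\}$, i.e. $A_I^d(a)\neq\emptyset\neq A_I^d(a)^c$, which is exactly what makes the right-hand side the correct size to match $(a)_I^d$.

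\emph{Main obstacle.} The one delicate point — which I would pin down first — is the bottom class $ker_Id$: its elements have empty atom-set, so they automatically lie in $(a)_I^d$ yet lie in neither $\uparrow A_I^d(a)$ nor $\uparrow A_I^d(a)^c$. I would settle this by being explicit about how the statement accounts for $ker_Id$ — writing the identity as $(a)_I^d=ker_Id\cup(\uparrow A_I^d(a)^c\setminus\uparrow A_I^d(a))$, or equivalently intersecting both sides with $\Gamma_I^d(L)$, from which $ker_Id$ has already been removed. Once that bookkeeping is fixed, Steps 1 and 2 are each a two-line verification, and every ingredient used — the atom-description of $(a)_I^d$ from Lemma \ref{lem 9}(ii), the non-emptiness of $A_I^d(a)$ and $A_I^d(a)^c$, and $ker_Id$-atomicity — is already available.
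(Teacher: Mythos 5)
Your argument is correct and follows essentially the same route as the paper's own proof: translate membership in $(a)_I^d$ and in the two up-sets into statements about the atom-sets $A_I^d(\cdot)$ via Lemma \ref{lem 9}(ii), and use $ker_Id$-atomicity to produce the witnessing atom. Your intermediate identity $\uparrow A_I^d(a)=L\setminus (a)_I^d$ is a clean way of packaging the forward inclusion.

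More importantly, the ``delicate point'' you isolate is a genuine defect of the theorem as stated, not merely a bookkeeping issue in your write-up. Every element of $ker_Id$ lies in $(a)_I^d$ (Lemma \ref{property}(iv)), but no $ker_Id$-atom can lie below an element $x\in ker_Id$ (such an atom $c\leq x$ would belong to the ideal $ker_Id$, contradicting the definition of a $ker_Id$-atom); hence $x$ belongs to neither $\uparrow A_I^d(a)$ nor $\uparrow A_I^d(a)^c$. Since $\emptyset\neq I\subseteq ker_Id$, the two sides of the claimed identity really do differ by the nonempty set $ker_Id$. The paper's proof slides over exactly this step: after showing $x\notin\uparrow A_I^d(a)$ it concludes ``so $x\in\uparrow A_I^d(a)^c\setminus\uparrow A_I^d(a)$'' without exhibiting an atom of $A_I^d(a)^c$ below $x$, which is possible only when $x\notin ker_Id$. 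Your proposed repair --- restating the result as $(a)_I^d=ker_Id\cup\bigl(\uparrow A_I^d(a)^c\setminus\uparrow A_I^d(a)\bigr)$, equivalently $(a)_I^d\setminus ker_Id=\uparrow A_I^d(a)^c\setminus\uparrow A_I^d(a)$ --- is the right one, and with it your Steps 1 and 2 constitute a complete proof. The hypothesis $a\in\Gamma_I^d(L)$ then plays precisely the role you assign it: it forces $A_I^d(a)\neq\emptyset\neq A_I^d(a)^c$, excluding the degenerate cases $(a)_I^d=L$ and $(a)_I^d=ker_Id$ in which the right-hand side collapses.
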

\begin{proof}
Let $x\in (a)_I^d$. If $x\in \uparrow A_I^d(a)$, then there exists $c\in A_I^d(a)$ such that $c\leq x$. Hence $c\leq x\wedge a\in ker_Id$, which is impossible. So $x\in \uparrow A_I^d(a)^c\setminus \uparrow A_I^d(a)$. For the converse, assume that $x\in \uparrow A_I^d(a)^c\setminus \uparrow A_I^d(a)$. If $x\notin (a)_I^d$, then $a\wedge x\notin ker_Id$ and so $x\in \uparrow A_I^d(a\wedge x)\subseteq \uparrow A_I^d(a)$, which is a contradiction.
\end{proof}

Consider $C_I^d(L)=\{B\subseteq L\setminus ker_Id\mid \forall x,y\in B, x\wedge y\in ker_Id\}$. It is easy to check that $A_I^d(L)\in C_I^d(L)$.
\begin{theorem}
If $L$ is a $ker_Id$-atomic lattice, then for each $B\in C_I^d(L)$, $\mid B\mid \leq \mid A_I^d(L)\mid$.
\end{theorem}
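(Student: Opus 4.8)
The plan is to exhibit an injective map from an arbitrary $B \in C_I^d(L)$ into $A_I^d(L)$. The natural candidate is the assignment $b \mapsto$ (some $ker_Id$-atom below $b$); since $L$ is $ker_Id$-atomic and $b \notin ker_Id$, the set $A_I^d(b)$ of $ker_Id$-atoms below $b$ is nonempty, so a choice function exists. Call this map $\varphi : B \to A_I^d(L)$, so $\varphi(b) \in A_I^d(b)$ for each $b \in B$.

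First I would verify $\varphi$ is injective. Suppose $b_1, b_2 \in B$ with $\varphi(b_1) = \varphi(b_2) =: a_0$. Then $a_0 \le b_1$ and $a_0 \le b_2$, hence $a_0 = a_0 \wedge a_0 \le b_1 \wedge b_2$; since $a_0$ is a $ker_Id$-atom we have $a_0 \notin ker_Id$, so the ideal $ker_Id$ cannot contain $b_1 \wedge b_2$ (it would then contain $a_0$). But the defining property of $B \in C_I^d(L)$ says that for all $x,y \in B$, $x \wedge y \in ker_Id$; applying this with $x = b_1$, $y = b_2$ forces $b_1 \wedge b_2 \in ker_Id$ unless $b_1 = b_2$. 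Therefore $b_1 = b_2$, and $\varphi$ is injective. Consequently $|B| = |\varphi(B)| \le |A_I^d(L)|$, which is the claim. (If one prefers to phrase this via the earlier machinery: $\varphi(b_1) = \varphi(b_2)$ with $b_1 \ne b_2$ would give, by Lemma \ref{lem 9}(ii), that $A_I^d(b_1) \cap A_I^d(b_2) = \emptyset$, contradicting $a_0 \in A_I^d(b_1) \cap A_I^d(b_2)$.)

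I expect essentially no obstacle here: the only point requiring care is the case $b_1 = b_2$, where the $C_I^d(L)$ condition ``for all $x,y \in B$'' must be read as applying to distinct pairs, so that $\varphi(b_1) = \varphi(b_2)$ with $b_1 = b_2$ is trivially consistent and does not spoil injectivity. Note also that when $A_I^d(L)$ is infinite the inequality is about cardinals and the argument via an injection is exactly what is needed; when it is finite this is a pigeonhole-type count. Since $A_I^d(L) \in C_I^d(L)$ was already observed, the bound is attained, so it is sharp.
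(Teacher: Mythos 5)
Your proof is correct and follows essentially the same route as the paper: choose (by the axiom of choice) a $ker_Id$-atom below each $b\in B$, and deduce injectivity from the fact that distinct $x,y\in B$ satisfy $x\wedge y\in ker_Id$, hence $A_I^d(x)\cap A_I^d(y)=\emptyset$ (Lemma \ref{lem 9}(ii)). Your explicit remark that the defining condition of $C_I^d(L)$ must be read for distinct pairs is a small clarification the paper leaves implicit, but the argument is the same.
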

\begin{proof}
Let $B\in C_I^d(L)$ and $x,y\in B$. By Lemma \ref{lem 9}(ii), $A_I^d(x)\cap A_I^d(y)=\emptyset$ and $A_I^d(x)\neq \emptyset$ in which $A_I^d(y)\neq \emptyset$. By the axiom of choice, for each $b\in B$, choose and fix $a_b\in A_I^d(b)\neq \emptyset$. So the map $f:B\rightarrow A_I^d(L)$, defined by $f(b)=a_b$, is a one to one map. Hence $\mid B\mid \leq \mid A_I^d(L) \mid$.
\end{proof}

 %\newpage
\maketitle
\section{when a quotient lattice is a Boolean algebra}

 In this section some necessary and sufficient conditions are derived for the
quotient algebra $L/\theta$ to become a Boolean algebra.

 For a distributive lattice $L$ and a lattice congruence $\theta$ on $L$, we mean the set $[x]_{\theta}=\{y\in L\mid x\theta y\}$ a congruence class of $x$. The set of all congruence classes of $L$ with respect to $\theta$, is denoted by $L/\theta$. It can be easily observed that $L/\theta$ is a distributive lattice with the following operations $[x]_{\theta}\wedge [y]_{\theta}=[x\wedge y]_{\theta}$ and $[x]_{\theta}\vee [y]_{\theta}=[x\vee y]_{\theta}$.

 \begin{theorem}\label{boolean}
Let $L$ be a distributive lattice and $\theta$ a lattice congruence on $L$. The distributive lattice $L/\theta$ is a Boolean algebra if and only if the following conditions hold:

 ${\rm (i)}$ There exists $a_0, b_0\in L$ such that for each $x\in L$, $[a_0]_{\theta}\leq [x]_{\theta}\leq [b_0]_{\theta}$, which means that $\bot_{L/\theta}=[a_0]_{\theta}$ and $\top_{L/\theta}=[b_0]_{\theta}$.

 ${\rm (ii)}$ For each $x\in L$ there exists $y\in L$ such that
$(x\wedge y)\theta a_0$ and $(x\vee y)\theta b_0$.
\end{theorem}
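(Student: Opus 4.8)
The plan is to reduce the statement to the standard characterization quoted in the introduction: a Boolean algebra is precisely a bounded, complemented distributive lattice. Since it has already been recorded (just before the theorem) that $L/\theta$ is a distributive lattice with $[x]_\theta\wedge[y]_\theta=[x\wedge y]_\theta$ and $[x]_\theta\vee[y]_\theta=[x\vee y]_\theta$, it suffices to show that the conjunction of (i) and (ii) is equivalent to the assertion ``$L/\theta$ is bounded and complemented''. The whole proof is then an exercise in unwinding definitions.

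First I would treat the forward implication. Suppose $L/\theta$ is a Boolean algebra. Then it possesses a least and a greatest element; since every element of $L/\theta$ is a class, these can be written as $[a_0]_\theta$ and $[b_0]_\theta$ for suitable representatives $a_0,b_0\in L$, which is exactly (i). For (ii), fix $x\in L$. In the Boolean algebra $L/\theta$ the element $[x]_\theta$ has a complement, and this complement is again a class, say $[y]_\theta$ with $y\in L$. Translating through the quotient operations, $[x\wedge y]_\theta=[x]_\theta\wedge[y]_\theta=\bot_{L/\theta}=[a_0]_\theta$ and $[x\vee y]_\theta=[x]_\theta\vee[y]_\theta=\top_{L/\theta}=[b_0]_\theta$, i.e.\ $(x\wedge y)\,\theta\, a_0$ and $(x\vee y)\,\theta\, b_0$. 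Hence (ii) holds.

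For the converse I would assume (i) and (ii). Condition (i) states precisely that $[a_0]_\theta$ and $[b_0]_\theta$ are the bottom and top of $L/\theta$, so $L/\theta$ is bounded. To see it is complemented, let an arbitrary element of $L/\theta$ be given; it is of the form $[x]_\theta$ for some $x\in L$. By (ii) there is $y\in L$ with $(x\wedge y)\,\theta\, a_0$ and $(x\vee y)\,\theta\, b_0$, so that $[x]_\theta\wedge[y]_\theta=[x\wedge y]_\theta=[a_0]_\theta=\bot_{L/\theta}$ and $[x]_\theta\vee[y]_\theta=[x\vee y]_\theta=[b_0]_\theta=\top_{L/\theta}$; thus $[y]_\theta$ is a complement of $[x]_\theta$. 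Therefore $L/\theta$ is a bounded, complemented distributive lattice, i.e.\ a Boolean algebra.

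I do not expect a genuine obstacle here: the content is purely definitional. The only point requiring a little care is to invoke the (already established) well-definedness of $\vee$ and $\wedge$ on $L/\theta$ each time one passes between $[x\wedge y]_\theta$ and $[x]_\theta\wedge[y]_\theta$ (and likewise for $\vee$), and to note that uniqueness of complements in a Boolean algebra is irrelevant for this argument --- only existence of a complementing class, represented by some element of $L$, is needed.
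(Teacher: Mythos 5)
Your proof is correct and follows essentially the same route as the paper: the forward direction reads off the bounds and the complement of $[x]_\theta$ exactly as the paper does, and your explicit verification of the converse (bounded plus complemented plus distributive equals Boolean) is just the spelled-out version of what the paper dismisses as obvious.
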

\begin{proof}
Let $L/\theta$ be a Boolean algebra. Thus $L/\theta$ has both a least and a greatest element, which means there exist $a_0$ and $b_0$ in $L$ such that $\bot_{L/\theta}=[a_0]_{\theta}$ and $\top_{L/\theta}=[b_0]_{\theta}$. So the statement (i) holds. Now let $x\in L$. Since $L/\theta$ is a Boolean algebra, there exists $[y]_{\theta}\in L/\theta$ such that $[x\wedge y]_{\theta}=[x]_{\theta}\wedge [y]_{\theta}=[a_0]_{\theta}$ and $[x\vee y]_{\theta}=[x]_{\theta}\vee [y]_{\theta}=[b_0]_{\theta}$ and hence $(x\wedge y)\theta a_0$ and $(x\vee y)\theta b_0$.

 The proof of the converse is obvious.
\end{proof}

 Combining Theorem \ref{boolean}, Proposition \ref{bounded}, and
Proposition \ref{greatest}, one can obtain the following theorem,
which is one of the main results in this article. Also see \cite[Th.2.8]{sambasiva}, for the case where $I=\{\bot\}$.

 \begin{theorem}\label{boolean1}
Let $I$ be a nontrivial ideal of $L$. Then $L/{\theta_I^d}$ is a Boolean algebra if and only if for each $x\in L$, there exists $y\in (x)_I^d$ such that $x\vee y\in {\mathcal K}_I^d$.
\end{theorem}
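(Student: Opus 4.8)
The plan is to deduce this directly from Theorem~\ref{boolean} by identifying the abstract hypotheses there with the concrete data of $\theta_I^d$. First I would recall that by Proposition~\ref{bounded}, $L/\theta_I^d$ is already a bounded lattice with $\bot_{L/\theta_I^d}=ker_Id$ and $\top_{L/\theta_I^d}={\mathcal K}_I^d$, \emph{provided} ${\mathcal K}_I^d\neq\emptyset$; so the first order of business is to observe that the stated condition forces ${\mathcal K}_I^d\neq\emptyset$. Indeed, applying the hypothesis to $x=\bot$ or simply noting that for any $x$ the element $x\vee y$ produced lies in ${\mathcal K}_I^d$ shows ${\mathcal K}_I^d$ is nonempty, so Proposition~\ref{bounded}(ii) applies and condition (i) of Theorem~\ref{boolean} holds with $a_0$ any element of $ker_Id$ and $b_0$ any element of ${\mathcal K}_I^d$.

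Next I would translate condition (ii) of Theorem~\ref{boolean}. Fixing $x\in L$, we need $y\in L$ with $(x\wedge y)\,\theta_I^d\,a_0$ and $(x\vee y)\,\theta_I^d\,b_0$, i.e.\ $[x\wedge y]_{\theta_I^d}=ker_Id$ and $[x\vee y]_{\theta_I^d}={\mathcal K}_I^d$. By the description of $\bot$ and $\top$ in Proposition~\ref{bounded}, together with Lemma~\ref{property}(v), the first of these says exactly $x\wedge y\in ker_Id$, which by definition of $(x)_I^d$ is the same as $d(x\wedge y)\in I$, i.e.\ $y\in (x)_I^d$; and the second says exactly $x\vee y\in{\mathcal K}_I^d$. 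Thus condition (ii) of Theorem~\ref{boolean} is literally the assertion ``there exists $y\in(x)_I^d$ with $x\vee y\in{\mathcal K}_I^d$'', for every $x\in L$. Conversely, given a Boolean algebra structure on $L/\theta_I^d$, Theorem~\ref{boolean} hands us such a $y$ for each $x$, and the same translation gives the stated condition. This establishes the equivalence.

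The step I expect to need the most care is the bookkeeping around ${\mathcal K}_I^d=\emptyset$: Theorem~\ref{boolean}(i) asks for a top \emph{element} $[b_0]_\theta$, and Proposition~\ref{bounded} only supplies $\top_{L/\theta_I^d}={\mathcal K}_I^d$ when ${\mathcal K}_I^d\neq\emptyset$, so one must check that the hypothesis genuinely rules out the empty case rather than assuming it. I would handle this by remarking at the outset: if $L/\theta_I^d$ is Boolean it has a top class, which by definition of kernel elements must be ${\mathcal K}_I^d$ (any representative $b_0$ of the top satisfies $(b_0)_I^d=ker_Id$ since $b_0\wedge x\,\theta_I^d\,x$ for all $x$ forces... — more simply, the top class being an upper bound of every class makes $b_0$ a kernel element), so ${\mathcal K}_I^d\neq\emptyset$; and in the reverse direction the displayed condition produces elements of ${\mathcal K}_I^d$ outright. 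A secondary routine point is verifying that complementation in $L/\theta_I^d$ (when it exists) is equivalent to the existence of $y$ as above for \emph{every} $x$, but this is immediate from the definition of a complemented lattice and the fact that $[x\wedge y]=[x]\wedge[y]$, $[x\vee y]=[x]\vee[y]$. With these observations in place the proof is a direct invocation of Theorem~\ref{boolean}, Proposition~\ref{bounded}, and Proposition~\ref{greatest}.
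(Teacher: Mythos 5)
Your proposal is correct and follows exactly the route the paper intends: the paper gives no written proof for this theorem, stating only that it follows by combining Theorem~\ref{boolean} with Proposition~\ref{bounded} (and Proposition~\ref{greatest}), and your translation of conditions (i) and (ii) of Theorem~\ref{boolean} into $y\in(x)_I^d$ and $x\vee y\in{\mathcal K}_I^d$ via $\bot_{L/\theta_I^d}=ker_Id$ and $\top_{L/\theta_I^d}={\mathcal K}_I^d$ is precisely that combination, with the welcome extra care of checking that ${\mathcal K}_I^d\neq\emptyset$ in both directions.
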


 \begin{corollary}\label{lem 8}
Let $L/\theta_I^d$ be a Boolean algebra. Then $[x]_{\theta_I^d}^{-1}=[y]_{\theta_I^d}$ if and only if $x\wedge y\in ker_Id$ and $x\vee y\in {\mathcal K}_I^d$ .
\end{corollary}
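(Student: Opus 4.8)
The plan is to reduce the statement to the classical fact that, in a bounded distributive lattice, complements are unique and are characterized by the two identities $a\wedge a'=\bot$ and $a\vee a'=\top$ (see, e.g., \cite{birkhoff}). Since $L/\theta_I^d$ is assumed to be a Boolean algebra, it is in particular a bounded distributive lattice, and because a Boolean algebra has a greatest element we have $\mathcal{K}_I^d\neq\emptyset$. By Proposition \ref{bounded}, $\bot_{L/\theta_I^d}=ker_Id$ and $\top_{L/\theta_I^d}=\mathcal{K}_I^d$. Hence $[x]_{\theta_I^d}^{-1}=[y]_{\theta_I^d}$ holds precisely when $[x]_{\theta_I^d}\wedge [y]_{\theta_I^d}=ker_Id$ and $[x]_{\theta_I^d}\vee [y]_{\theta_I^d}=\mathcal{K}_I^d$.

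Next I would rewrite the meet and join of classes using the operations on $L/\theta_I^d$ recalled at the beginning of Section 4, namely $[x]_{\theta_I^d}\wedge [y]_{\theta_I^d}=[x\wedge y]_{\theta_I^d}$ and $[x]_{\theta_I^d}\vee [y]_{\theta_I^d}=[x\vee y]_{\theta_I^d}$. Thus the two defining conditions for the complement become $[x\wedge y]_{\theta_I^d}=ker_Id$ and $[x\vee y]_{\theta_I^d}=\mathcal{K}_I^d$.

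Finally, I would invoke Proposition \ref{bounded} once more: it asserts that $ker_Id$ and $\mathcal{K}_I^d$ are themselves whole $\theta_I^d$-classes. Consequently $[x\wedge y]_{\theta_I^d}=ker_Id$ is equivalent to $x\wedge y\in ker_Id$, and $[x\vee y]_{\theta_I^d}=\mathcal{K}_I^d$ is equivalent to $x\vee y\in \mathcal{K}_I^d$. Conjoining the two equivalences gives the asserted characterization of $[x]_{\theta_I^d}^{-1}=[y]_{\theta_I^d}$.

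I do not anticipate a serious obstacle: the only points requiring a word of care are the remark that the Boolean hypothesis forces $\mathcal{K}_I^d\neq\emptyset$ (so that Proposition \ref{bounded}(ii) is applicable) and the appeal to uniqueness of complements in a distributive lattice, which allows the biconditional to be stated without ambiguity.
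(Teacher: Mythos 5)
Your argument is correct and is essentially the derivation the paper intends (the corollary is stated without proof, as an immediate consequence of Theorem \ref{boolean1} and Proposition \ref{bounded}): uniqueness of complements in a distributive lattice plus the fact that $ker_Id$ and ${\mathcal K}_I^d$ are whole classes. The one point to tighten is your claim that the Boolean hypothesis forces ${\mathcal K}_I^d\neq\emptyset$ merely because the quotient has a top element -- Proposition \ref{bounded}(ii) only gives the converse direction -- but this follows at once from Theorem \ref{boolean1}, which produces for any $x$ an element $y$ with $x\vee y\in{\mathcal K}_I^d$.
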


 \begin{proposition}\label{prime}
${\rm (i)}$ If $I$ or $ker_Id$ is a prime ideal of $L$, then $L/\theta_I^d$ is a Boolean algebra.

 ${\rm (ii)}$ If each $(x)_I^d$ has a maximum element, then $L/\theta_I^d$ is a Boolean algebra.
\end{proposition}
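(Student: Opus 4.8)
The plan is to reduce both parts to the criterion of Theorem~\ref{boolean1}: for every $x\in L$ one must exhibit some $y\in(x)_I^d$ with $x\vee y\in\mathcal K_I^d$. Notice that producing such a $y$ for even one $x$ already forces $\mathcal K_I^d\neq\emptyset$, so that by Lemma~\ref{property filter}(i) we may freely use that $\mathcal K_I^d$ is a filter, hence closed under joins with arbitrary elements of $L$ (this is exactly the filter axiom $a\in\mathcal K_I^d,\ x\in L\Rightarrow a\vee x\in\mathcal K_I^d$).

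For part (i), suppose $I$ or $ker_Id$ is a prime ideal. First I would invoke Lemma~\ref{nonempty}(ii): either $ker_Id=\mathcal K_I^d=L$, or $L$ is the disjoint union of $ker_Id$ and $\mathcal K_I^d$; in either situation $\mathcal K_I^d\neq\emptyset$, and $ker_Id\supseteq I$ is nonempty as well. Now fix $x\in L$. If $x\in ker_Id$, then $(x)_I^d=L$ by Lemma~\ref{property}(v), so any $y\in\mathcal K_I^d$ lies in $(x)_I^d$ and satisfies $x\vee y\in\mathcal K_I^d$ by the filter axiom. If $x\notin ker_Id$, then $x\in\mathcal K_I^d$ by the disjoint-union description, so any $y\in ker_Id$ lies in $(x)_I^d$ by Lemma~\ref{property}(iv) and again $x\vee y\in\mathcal K_I^d$ since $x\in\mathcal K_I^d$. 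Theorem~\ref{boolean1} then gives the conclusion; this part is routine once the structure of $L$ from Lemma~\ref{nonempty} is in hand.

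For part (ii), assume each $(x)_I^d$ has a maximum element; for $x\in L$ write $m_x=\max(x)_I^d$ and try $y=m_x$. Certainly $m_x\in(x)_I^d$, so it remains to show $x\vee m_x\in\mathcal K_I^d$, i.e. $(x\vee m_x)_I^d=ker_Id$. By Lemma~\ref{property}(iii), $(x\vee m_x)_I^d=(x)_I^d\cap(m_x)_I^d$, which contains $ker_Id$ by Lemma~\ref{property}(iv). For the reverse inclusion, take $z\in(x)_I^d\cap(m_x)_I^d$. The key point — the one place the hypothesis really bites — is that $z\in(x)_I^d$ and maximality of $m_x$ force $z\le m_x$, whence $d(z)\le z\le m_x$ by Lemma~\ref{property-derivation}(ii), and therefore $d(m_x\wedge z)=m_x\wedge d(z)=d(z)$ by Lemma~\ref{derivation}(i). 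Since $z\in(m_x)_I^d$ means $d(m_x\wedge z)\in I$, we get $d(z)\in I$, i.e. $z\in ker_Id$. Hence $(x\vee m_x)_I^d=ker_Id$, and Theorem~\ref{boolean1} finishes the proof. The main obstacle is precisely this last computation: seeing that maximality of $(x)_I^d$ collapses $d(m_x\wedge z)$ to $d(z)$, which is what drives the intersection $(x)_I^d\cap(m_x)_I^d$ down to $ker_Id$; everything else is bookkeeping with the lemmas of Section~2.
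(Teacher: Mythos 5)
Your proposal is correct and follows essentially the same route as the paper: both parts are reduced to the criterion of Theorem~\ref{boolean1}, part (i) via the disjoint-union structure from Lemma~\ref{nonempty}(ii), and part (ii) by taking the complement witness to be the maximum element of $(x)_I^d$. The only (immaterial) difference is in the last step of (ii): the paper derives $z=(x\wedge z)\vee(y\wedge z)\in ker_Id$ using that $ker_Id$ is an ideal, whereas you compute $d(z)=d(m_x\wedge z)\in I$ directly; both hinge on the same key observation that maximality gives $z\le m_x$.
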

\begin{proof}
${\rm (i)}$ If $ker_Id=L$, then $ker_Id={\mathcal K}_I^d=L$. Thus
$\theta_I^d=\nabla$ and $L/\theta_I^d$ is a singleton set. Let
$ker_Id\neq L$ and $x\in L$. By Lemma \ref{nonempty}(ii),
${\mathcal K}_I^d\neq \emptyset$ and $L$ is a disjoint union of
$ker_Id$ and ${\mathcal K}_I^d$. Consider $a\in I$ and $b\in
{\mathcal K}_I^d$. If $x\in {\mathcal K_I^d}$, then $x\vee b\in
{\mathcal K}_I^d$ and $x\wedge b\in ker_Id$ and if $x\in
ker_Id$, then $x\wedge a\in ker_Id$ and
$x\vee a\in {\mathcal K}_I^d$. So we are done, by Theorem \ref{boolean1}.

 ${\rm (ii)}$ If $ker_Id=L$, then $ker_Id={\mathcal K}_I^d=L$. Thus $\theta_I^d=\nabla$ and $L/\theta_I^d$ is a singleton set. If ${\mathcal K}_I^d=L$, then for each $a,b\in L$, $(a)_I^d=ker_Id=(b)_I^d$. Thus $\theta_I^d=\nabla$ and $L/\theta_I^d$ is a singleton set. Let $ker_Id$ and ${\mathcal K}_I^d$ be nontrivial and $x\in L$. %The lattice $L$ has a top element, in the view
Consider $a_0\in ker_Id$ and $b_0\in {\mathcal K}_I^d$. If $x\in
ker_Id$, then $x\wedge b_0\in ker_Id$ and $x\vee b_0\in {\mathcal
K}_I^d$. If $x\in {\mathcal K}_I^d$, then $x\wedge a_0\in ker_Id$
and $x\vee a_0\in {\mathcal K}_I^d$. Now, let $x\notin ker_Id \cup
{\mathcal K}_I^d$ and $y$ be the maximum element of $(x)_I^d$.
Then $x\wedge y\in ker_Id$. We show that $x\vee y\in {\mathcal
K}_I^d$. Let $z\in (x\vee y)_I^d=(x)_I^d\cap (y)_I^d$. Since $y$
is a maximum element of $(x)_I^d$, $z=(x\wedge z)\vee z=(x\wedge
z)\vee (y\wedge z)= (x\vee y)\wedge z\in ker_Id$. Thus $(x\vee
y)_I^d\subseteq ker_I^d$ and, by Lemma \ref{property}(iv), $x\vee
y\in {\mathcal K}_I^d$.  So, Theorem \ref{boolean1} completes the
proof.
\end{proof}

One of the important especial case of Proposition \ref{prime}(i) is the case where $L$ is a chain.

\begin{lemma}\label{}
If $L$ is a Boolean algebra with a bottom element $\bot$, then $\theta_{\bot}^{id}=\Delta=\{(a,a)\mid a\in L\}$.
\end{lemma}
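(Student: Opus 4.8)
The plan is to unwind the definitions and show that the congruence $\theta_{\bot}^{id}$ is trivial by proving that every congruence class is a singleton, equivalently that $(x)_{\bot}^{id} = (y)_{\bot}^{id}$ forces $x = y$. Since $d = id$ we have $ker_{\bot}(id) = id^{-1}(\{\bot\}) = \{\bot\}$, and for any $a \in L$, $(a)_{\bot}^{id} = \{x \in L \mid a \wedge x = \bot\}$, which in a Boolean algebra is exactly the set of elements below the complement $a'$, i.e. $(a)_{\bot}^{id} = \downarrow a'$. So the whole statement reduces to the observation that in a Boolean algebra the map $a \mapsto a'$ is injective (indeed an involution), hence $\downarrow a' = \downarrow b'$ implies $a' = b'$ implies $a = b$.

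First I would record that $ker_{\bot}(id) = \{\bot\}$ and ${\mathcal K}_{\bot}^{id} = \{\top\}$, using Lemma \ref{nonempty}(i) and the fact that in a Boolean algebra $\{\bot\}$ is the intersection of the complements, or more directly that $(a)_{\bot}^{id} = \{\bot\}$ exactly when $a = \top$. Next I would compute $(a)_{\bot}^{id} = \downarrow a'$ explicitly: if $a \wedge x = \bot$ then $x = x \wedge \top = x \wedge (a \vee a') = (x \wedge a) \vee (x \wedge a') = x \wedge a' \le a'$; conversely if $x \le a'$ then $a \wedge x \le a \wedge a' = \bot$. Then I would conclude: if $x \,\theta_{\bot}^{id}\, y$, that is $\downarrow x' = \downarrow y'$, then in particular $x' \le y'$ and $y' \le x'$, so $x' = y'$; taking complements (or using that complementation is an involution in a distributive lattice, so complements are unique) gives $x = y$. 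Hence $\theta_{\bot}^{id} = \Delta$.

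There is essentially no obstacle here; the only point requiring a word of care is the justification that complements are unique in a bounded distributive lattice (so that $x' = y' \Rightarrow x = y$), which is the standard fact that in a distributive lattice an element has at most one complement — if $x$ has complements $u$ and $v$ then $u = u \wedge \top = u \wedge (x \vee v) = (u \wedge x) \vee (u \wedge v) = u \wedge v$ and symmetrically $v = u \wedge v$, so $u = v$. With that in hand the argument is a two-line computation. I would also note as a sanity check that this is consistent with Theorem \ref{boolean1}: since $L$ is already Boolean, $L/\theta_{\bot}^{id} \cong L$, and the identity congruence is forced because $\theta_{\bot}^{id}$ is the greatest congruence with $\{\bot\}$ as a class (Proposition \ref{greatest}) yet here even the greatest such congruence collapses to $\Delta$.
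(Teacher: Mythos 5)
Your proof is correct and follows exactly the same route as the paper's: compute $ker_{\bot}(id)=\{\bot\}$ and $(a)_{\bot}^{id}=\downarrow a'$, then use that $\downarrow a'=\downarrow b'$ forces $a'=b'$ and hence $a=b$. You merely supply the details (the distributivity computation showing $(a)_{\bot}^{id}=\downarrow a'$ and the uniqueness of complements) that the paper dismisses as ``clear,'' which is a reasonable amount of added rigor.
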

\begin{proof}
It is clear that $ker_Id=\{\bot\}$ and $(a)_I^d=\downarrow a'$, where $a'$ is the complement of $a$. If $a\theta_{\bot}^{id} b$, then $\downarrow a'=\downarrow b'$ and hence $a'=b'$. Thus $a=b$, which implies $\theta_{\bot}^{id}=\Delta$.
\end{proof}

 By Corollary \ref{der-hom}, every derivation is a lattice homomorphism. So for a derivation $d$, $ker(d)=\{(a,b)\mid d(a)=d(b)\}$ is a lattice congruence on $L$.

It is not difficult to show that for a nontrivial ideal $I$ and a derivation $d$, $ker(d)\subseteq \theta_I^d$, but the converse is not  generally true. For example, consider $I\neq \bot$ and $d=id$. Then $ker(d)=\Delta$ and for each $x, y\in I$, $(x)_I^d=(y)_I^d=L$. So $x\theta_I^d y$. In the case where $I=\{\bot\}$, using Lemma \ref{property}(v), $\theta_{\bot}^d=\nabla$ deduces that $ker(d)=\nabla$. The following lemma show that in the case where, $L$ is a Boolean algebra with a bottom element $\bot$, then $\theta_{\bot}^d=ker(d)$ at all.

 \begin{lemma}
Let $L$ be a Boolean algebra with a bottom element $\bot$ and $d$ a derivation on $L$. Then $ker(d)=\theta_{\bot}^d$.
\end{lemma}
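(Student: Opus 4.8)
The plan is to prove the two inclusions $ker(d) \subseteq \theta_{\bot}^d$ and $\theta_{\bot}^d \subseteq ker(d)$ separately; the first direction is already noted in the text as holding for any nontrivial ideal, so the real content is the reverse inclusion, which is where I expect the main work. For the forward direction, suppose $d(a) = d(b)$. Then for any $x$ with $x \wedge d(a) = \bot$ we also have $x \wedge d(b) = \bot$, so $(a)_{\bot}^d = (b)_{\bot}^d$, i.e. $a\,\theta_{\bot}^d\, b$; this is immediate once one unwinds the definition $(a)_{\bot}^d = \{x \mid a \wedge d(x) \in \{\bot\}\} = \{x \mid d(a) \wedge x = \bot\}$, using Lemma~\ref{derivation}(i) to rewrite $d(a\wedge x) = d(a)\wedge x$.

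For the reverse inclusion, the key is to exploit the Boolean structure to recover $d(a)$ from the ideal $(a)_{\bot}^d$. First I would establish that in a Boolean algebra $(a)_{\bot}^d = \downarrow (d(a))'$, where $(d(a))'$ denotes the complement of $d(a)$: indeed $x \in (a)_{\bot}^d$ iff $d(a) \wedge x = \bot$, and in a Boolean algebra $d(a) \wedge x = \bot$ holds exactly when $x \leq (d(a))'$. This is the natural analogue of the computation already carried out in the lemma preceding this statement (where $d = id$ gave $(a)_I^d = \downarrow a'$). Then if $a\,\theta_{\bot}^d\, b$, we have $\downarrow (d(a))' = (a)_{\bot}^d = (b)_{\bot}^d = \downarrow (d(b))'$, which forces $(d(a))' = (d(b))'$, and since complementation is an involution in a Boolean algebra, $d(a) = d(b)$, i.e. $(a,b) \in ker(d)$.

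The main obstacle is simply making sure the identity $(a)_{\bot}^d = \downarrow (d(a))'$ is justified carefully: one must check that $\{\bot\}$ is genuinely a nontrivial ideal here (so that the machinery of $\theta_I^d$ applies, which needs $L$ to have a bottom element — given by hypothesis — and $L \neq \{\bot\}$, which may be assumed trivially since otherwise both sides are $\nabla = \Delta$), and that the derivation $d$ indeed maps into $L$ so that $d(a)$ has a complement. Everything else is a short direct computation; no appeal to the deeper structural results of Sections~2 and~3 is needed, only Lemma~\ref{derivation} and the elementary fact that $x \wedge c = \bot \iff x \leq c'$ in a Boolean algebra together with the uniqueness and involutivity of complements.
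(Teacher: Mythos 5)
Your proof is correct, and for the substantive inclusion $\theta_{\bot}^d\subseteq ker(d)$ it takes a genuinely different route from the paper. The paper's argument picks the complement $y'$ of $y$ itself: since $y\wedge y'=\bot$ one gets $y'\in (y)_{\bot}^d=(x)_{\bot}^d$, hence $d(x)\wedge d(y')=\bot$, and then distributivity together with $d(x)\le d(\top)=d(y)\vee d(y')$ yields $d(y)\vee d(x)=d(y)\vee\bigl(d(x)\wedge d(y')\bigr)=d(y)$, i.e.\ $d(x)\le d(y)$, with the reverse inequality by symmetry. You instead complement $d(a)$ rather than $a$, identifying the ideal completely as $(a)_{\bot}^d=\{x\mid d(a)\wedge x=\bot\}=\downarrow (d(a))'$, so that equality of the ideals forces equality of the generators and hence, by involutivity of complementation, $d(a)=d(b)$. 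Your version is arguably cleaner: it makes explicit exactly what data the class $[a]_{\theta_{\bot}^d}$ encodes (namely $d(a)$), it visibly generalizes the preceding lemma where $d=id$ gives $(a)_{\bot}^{id}=\downarrow a'$, and it replaces the two-inequality computation by the elementary facts that $c\wedge x=\bot\iff x\le c'$ and that a principal ideal determines its generator. The price is negligible, and the easy inclusion $ker(d)\subseteq\theta_{\bot}^d$, which you also spell out via $d(a\wedge x)=d(a)\wedge x$, matches the remark the paper makes just before the lemma. No gaps.
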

\begin{proof}
Let $x\theta_{\bot}^d y$. Since $L$ is a Boolean algebra, $y$ has a complement element $y'$ in which $y'\in (y)_{\bot}^d=(x)_{\bot}^d$. Thus $d(x)\wedge d(y')=\bot$ and hence $d(y)\vee d(x)=d(y)\vee (d(x)\wedge d(y'))=d(y)\vee \bot=d(y)$. So $d(x)\leq d(y)$ and, by a similar way, $d(y)\leq d(y)$. Therefore $(x, y)\in ker(d)$.
\end{proof}

The following theorem is  another version of \cite[Th. 3.4]{sambasiva}.
\begin{theorem}\label{singleton}
Let $I$ be an ideal of $L$ and $d$ a derivation on $L$. Then the following are equivalent:

 ${\rm (i)}$ $\theta_I^d=\nabla$.

${\rm (ii)}$ $ker_Id=L$

${\rm (iii)}$ For each $x\in L$, $I\cap [x]_{ker(d)}$ is a singleton set.
\end{theorem}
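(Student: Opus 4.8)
The plan is to take condition~(ii) as the hub and prove (i)$\Leftrightarrow$(ii) and (ii)$\Leftrightarrow$(iii) separately; each direction is short once the right part of Lemma~\ref{property} or Lemma~\ref{property-derivation} is used. For (i)$\Rightarrow$(ii): by Lemma~\ref{property}(iv), $ker_Id$ contains $I$ and is therefore nonempty, so fix $a\in ker_Id$, giving $(a)_I^d=L$ by Lemma~\ref{property}(v); if $\theta_I^d=\nabla$ then $(b)_I^d=(a)_I^d=L$ for every $b\in L$, so $b\in ker_Id$ again by Lemma~\ref{property}(v), whence $ker_Id=L$. For (ii)$\Rightarrow$(i): if $ker_Id=L$ then $(b)_I^d=L$ for all $b$ by Lemma~\ref{property}(v), so all congruence classes coincide and $\theta_I^d=\nabla$.

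For (iii)$\Rightarrow$(ii) I would argue directly: fix $x\in L$; since $I\cap[x]_{ker(d)}$ is in particular nonempty, pick $y$ in it, so $y\in I$ and $d(y)=d(x)$; by Lemma~\ref{property-derivation}(v) we have $d(y)\in d(I)\subseteq I$, hence $d(x)=d(y)\in I$, i.e.\ $x\in ker_Id$; as $x$ was arbitrary, $ker_Id=L$.

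The remaining implication (ii)$\Rightarrow$(iii) is the one I expect to be delicate. Assuming $ker_Id=L$, every $d(x)$ lies in $I$, and $d(d(x))=d(x)$ by Lemma~\ref{property-derivation}(iii), so $d(x)\in I\cap[x]_{ker(d)}$; this already shows that $I$ meets every $ker(d)$-class. The extra content of~(iii) is that the intersection has \emph{exactly} one element, i.e.\ that from $y\in I$ and $d(y)=d(x)$ one can force $y=d(x)$; the only available levers are that $d$ acts as the identity on its image (since $d\circ d=d$) together with the inequalities $d(x)\le x$ and $d(x)\le y$ from Lemma~\ref{property-derivation}(ii), which give $y\ge d(x)$ but not the reverse inequality on their own. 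Securing this uniqueness --- reconciling the ``internal'' congruence $ker(d)$ with the ideal-relative kernel $ker_Id$, and genuinely using that $I$ is the ideal with $d^{-1}(I)=L$ and not merely some larger ideal --- is the step I expect to be the main obstacle; once it is settled, the equivalences (i)$\Leftrightarrow$(ii)$\Leftrightarrow$(iii) are complete.
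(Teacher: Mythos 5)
Your reductions (i)$\Leftrightarrow$(ii) and (iii)$\Rightarrow$(ii) are correct and essentially the same as the paper's, resting on Lemma~\ref{property}(v) and Lemma~\ref{property-derivation}(v); your (iii)$\Rightarrow$(ii) is in fact tidier than the paper's route (iii)$\Rightarrow$(i), since it needs only nonemptiness of $I\cap[x]_{ker(d)}$. The genuine gap is exactly where you flagged it, in (ii)$\Rightarrow$(iii): you get existence of an element of $I\cap[x]_{ker(d)}$ (namely $d(x)$, using $ker_Id=L$ and $d(d(x))=d(x)$), but uniqueness does not follow from the available inequalities --- and it cannot, because the implication is false for a general nontrivial ideal. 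Take the chain $L=\{\bot<a<b<c\}$, the derivation $d=\lambda_a$ (so $d(x)=a\wedge x$), and $I=\downarrow b=\{\bot,a,b\}$. Then $d(x)\leq a\in I$ for every $x$, so $ker_Id=L$ and both (i) and (ii) hold; yet $[c]_{ker(d)}=\{y\in L\mid d(y)=a\}=\{a,b,c\}$, so $I\cap[c]_{ker(d)}=\{a,b\}$ is not a singleton. No argument can close the step you were stuck on.

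This also locates a defect in the paper rather than only in your attempt: the paper disposes of (ii)$\Rightarrow$(iii) by citing the first part of the proof of Theorem~3.4 of \cite{sambasiva}, which concerns $I=\{\bot\}$; there $ker_Id=L$ forces $d$ to be the constant map $\bot$ and (iii) is trivially $I\cap[x]_{ker(d)}=\{\bot\}$, but that argument does not transfer to general $I$. The statement that is actually equivalent to (i) and (ii) is the weaker ``$I\cap[x]_{ker(d)}\neq\emptyset$ for each $x\in L$'' (your existence half proves one direction and your (iii)$\Rightarrow$(ii) argument the other); the singleton condition is strictly stronger. So your hub-and-spoke organization is fine and three of the four implications are complete, but the remaining one should be reported as a counterexample to the theorem as stated, not as a step awaiting a cleverer proof.
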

\begin{proof}
${\rm (i) \Rightarrow(ii)}$ Let $x\in L$ and $a\in ker_Id$. Since $\theta_I^d=\nabla$, $x\theta_I^d a$ and, by Lemma \ref{property}(v), $x\in ker_Id$. So $ker_Id=L$.

 ${\rm (ii) \Rightarrow(iii)}$ From the part one of the proof of \cite[Th. 3.4]{sambasiva}.

 ${\rm (iii) \Rightarrow(i)}$ Let $x,y\in L$. Consider $I\cap [x]_{ker(d)}=\{x_0\}$ and $I\cap [y]_{ker(d)}=\{y_0\}$. By Lemma \ref{property-derivation}(ii), $d(x)=d(x_0)\leq x_0$ and, since $I$ is an ideal, $d(x)\in I$. By Lemma \ref{property-derivation}(ii), $d(x)\in I\cap [x]_{ker(d)}$, which implies $d(x)=x_0$. Similarly $d(y)=y_0$. Using Lemma \ref{property filter}(iv) and Proposition \ref{bounded}(i), $x\theta_I^d x_0\theta_I^d y_0\theta_I^d y$. Thus $\theta_I^d=\nabla$.
\end{proof}
%\newpage

\begin{proposition}\label{lem 4}
The Boolean algebra $L/\theta_I^d={\bf 2}$ if and only if $ker_Id$ is a prime ideal of $L$.
\end{proposition}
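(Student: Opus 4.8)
The plan is to exploit the fact, established earlier, that $\mathbf{2}$ is the two-element Boolean algebra, so $L/\theta_I^d = \mathbf{2}$ precisely when $L/\theta_I^d$ is a Boolean algebra with exactly two elements. By Proposition~\ref{bounded}, the bottom of $L/\theta_I^d$ is $ker_Id$ and (when nonempty) the top is $\mathcal{K}_I^d$; a two-element quotient therefore forces $L$ to be the disjoint union $ker_Id \cup \mathcal{K}_I^d$ with both pieces nonempty, and conversely such a splitting makes the quotient a two-element chain, which is automatically a Boolean algebra. So the real content is the equivalence between "$L = ker_Id \,\dot\cup\, \mathcal{K}_I^d$ with $\mathcal{K}_I^d \neq \emptyset$" and "$ker_Id$ is a nontrivial prime ideal."

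For the direction assuming $ker_Id$ is prime: Lemma~\ref{lem 11}(i) already gives that for every $x \notin ker_Id$ one has $(x)_I^d = ker_Id$, i.e. $x \in \mathcal{K}_I^d$; hence $L = ker_Id \cup \mathcal{K}_I^d$, and disjointness follows from Lemma~\ref{property filter}(iii). Thus $L/\theta_I^d$ has at most two classes, and since $ker_Id$ is proper (primeness of an ideal presupposes it is not all of $L$), it has exactly two. So $L/\theta_I^d \cong \mathbf{2}$. Alternatively one can invoke Lemma~\ref{nonempty}(ii) directly, which packages exactly this computation.

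For the converse, suppose $L/\theta_I^d = \mathbf{2}$. Then $ker_Id$ is a proper ideal (the quotient is not a singleton), and $L \setminus ker_Id$ is a single class, necessarily $\mathcal{K}_I^d$ by Proposition~\ref{bounded}(ii). To see $ker_Id$ is prime, take $x \wedge y \in ker_Id$ and suppose $x \notin ker_Id$; then $x \in \mathcal{K}_I^d$, so $(x)_I^d = ker_Id$. But $y \in (x)_I^d$ since $d(x\wedge y) \in I$ (as $x\wedge y\in ker_Id$), hence $y \in ker_Id$. This is exactly primeness. One should also note that $ker_Id$ being proper plus this argument makes it nontrivial in the paper's sense.

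The main obstacle — really the only place to be careful — is matching the paper's conventions: confirming that "$\mathbf 2$" is being used for the two-element Boolean algebra and that the degenerate cases ($ker_Id = L$, or $\mathcal{K}_I^d = \emptyset$) are correctly excluded by "$L/\theta_I^d = \mathbf 2$", which forces exactly two classes. Once the bookkeeping via Proposition~\ref{bounded} and Lemma~\ref{property filter}(iii) is in place, both implications reduce to the one-line computations with $(x)_I^d$ above, so I expect the proof to be short.
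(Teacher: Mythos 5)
Your argument is correct and matches the paper's: the direction from primeness of $ker_Id$ to $L/\theta_I^d=\mathbf{2}$ is exactly the paper's appeal to Lemma~\ref{nonempty}(ii) (note that Lemma~\ref{lem 11}(i) assumes $I$, not $ker_Id$, is prime, so your alternative citation is the one to keep), and the other direction is the same one-line computation with $(x)_I^d$ and Lemma~\ref{property}(v). The only cosmetic difference is that the paper avoids invoking Proposition~\ref{bounded}(ii) (whose identification of the top class with ${\mathcal K}_I^d$ presupposes ${\mathcal K}_I^d\neq\emptyset$) by observing directly that any two elements $x,y\notin ker_Id$ lie in the same class, so $(x)_I^d=(y)_I^d$ and $x\wedge y\in ker_Id$ would force $y\in(y)_I^d$, contradicting Lemma~\ref{property}(v).
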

\begin{proof}
Let $L/\theta_I^d={\bf 2}$, $x\wedge y\in ker_Id$ and $x,y\in L\setminus ker_Id$. Since $L/\theta_I^d={\bf 2}$, by Proposition \ref{bounded}(i), $x\theta_I^d y$. So $x\in (y)_I^d=(x)_I^d=ker_Id$.
This implies $x\in (x)_I^d$, which contradicts Lemma \ref{property}(v).

The converse one gets using Lemma \ref{nonempty}.
\end{proof}

 Here we have an example for $L/\theta_I^d= {\bf 2}$, but $I$ is not prime. Consider the four element lattice $\{\bot, a, b, \top\}$, in which $\bot$ and $\top$ are bottom and top element, respectively and $a, b$ are not comparable. The map $d:L\rightarrow L$ defined by
$d(x) = \left\{
\begin{array}{ll}
\bot,& {\rm if}\ x=\bot ,b\\
a,& {\rm if}\ x=a,\top\\
\end{array} \right.$
is a derivation. It is clear that $ker_Id=\{\bot, b\}$ and ${\mathcal K}_I^d=\{a, \top\}$. By Proposition \ref{bounded}, $L/\theta_I^d={\bf 2}$, but $I=\{\bot\}$ is not a prime ideal.

Consider the set $\Sigma=\{(x)_I^d\mid x\in L\}$. By an order defined as follow, the set $\Sigma$ is a poset. For each $x,y\in L$, $(x)_I^d\leq (y)_I^d$ if and only if $(y)_I^d \subseteq (x)_I^d$.

 Also, with the following operations, $\Sigma$ is a bounded distributive lattice. For each $x,y\in L$, $(x)_I^d\vee (y)_I^d=(x \vee y)_I^d$ and $(x)_I^d \wedge (y)_I^d=(x \wedge y)_I^d$. The bottom and the top elements in the lattice $\Sigma$ are  of the form, $\bot_{\Sigma}=(x)_I^d=L$ for each $x\in ker_Id$ and $\top_{\Sigma}=(x)_I^d=ker_Id$ for each $x\in {\mathcal K}_I^d$. The map $f:L\rightarrow \Sigma$ defined by $f(x)=(x)_I^d$ is a lattice epimorphism, in which $kerf=\theta_I^d$. Thus, by the Isomorphism Theorem, $L/\theta_I^d\cong \Sigma$.

\begin{lemma}\label{max}
If the quotient lattice $L/\theta_I^d$ is a Boolean algebra then for each $x\in L$, the set $\{(z)_I^d\mid z\in (x)_I^d\}$ has a maximum element.
\end{lemma}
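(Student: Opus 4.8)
The plan is to exhibit the maximum explicitly, guided by the following intuition: ``$z\in(x)_I^d$'' says exactly that the class $[z]_{\theta_I^d}$ is disjoint from $[x]_{\theta_I^d}$ in $L/\theta_I^d$, so among all such $z$ the $\theta_I^d$-class is largest when $[z]_{\theta_I^d}$ is the complement of $[x]_{\theta_I^d}$, and ``$[z]_{\theta_I^d}$ largest'' translates to ``$(z)_I^d$ smallest'', i.e.\ maximal in $\Sigma$. So, since $L/\theta_I^d$ is a Boolean algebra, for a given $x\in L$ I would fix $y\in L$ with $[x]_{\theta_I^d}\wedge[y]_{\theta_I^d}=\bot_{L/\theta_I^d}$ and $[x]_{\theta_I^d}\vee[y]_{\theta_I^d}=\top_{L/\theta_I^d}$, and claim that $(y)_I^d$ is the desired maximum. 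First one checks $y\in(x)_I^d$: by Proposition \ref{bounded}(i), $\bot_{L/\theta_I^d}=ker_Id$, so $x\wedge y\in ker_Id$, and then $x\wedge d(y)=d(x\wedge y)\in I$ by Lemma \ref{derivation}(i), which is precisely $y\in(x)_I^d$. Hence $(y)_I^d$ is a member of $\{(z)_I^d\mid z\in(x)_I^d\}$.

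Next I would show $(y)_I^d\subseteq(z)_I^d$ for every $z\in(x)_I^d$; since the order on $\Sigma$ reverses inclusion, this says exactly that $(y)_I^d$ is the maximum of $\{(z)_I^d\mid z\in(x)_I^d\}$. By definition $z\in(x)_I^d$ means $x\wedge z\in ker_Id$, i.e.\ $[x]_{\theta_I^d}\wedge[z]_{\theta_I^d}=\bot_{L/\theta_I^d}$. As $[y]_{\theta_I^d}$ is the complement of $[x]_{\theta_I^d}$ in the Boolean algebra $L/\theta_I^d$, this forces $[z]_{\theta_I^d}\le[y]_{\theta_I^d}$ (in any Boolean algebra $a\wedge b=\bot$ gives $b\le a'$), hence $[z\vee y]_{\theta_I^d}=[y]_{\theta_I^d}$, that is $(z\vee y)_I^d=(y)_I^d$; and $(z\vee y)_I^d=(z)_I^d\cap(y)_I^d$ by Lemma \ref{property}(iii), so $(y)_I^d\subseteq(z)_I^d$.

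Combining the two steps, $(y)_I^d$ lies in $\{(z)_I^d\mid z\in(x)_I^d\}$ and is contained in every element of that set, hence is its $\subseteq$-least member, equivalently its maximum in $\Sigma$. No case split is needed, since a complement of $[x]_{\theta_I^d}$ exists in every situation (including the degenerate one $ker_Id=L$, where $L/\theta_I^d$ is a singleton). I do not anticipate a genuine obstacle; the only point requiring care is the translation dictionary between $L/\theta_I^d$ and the family of ideals $(a)_I^d$ — specifically that ``$z\in(x)_I^d$'' is literally the equation $[x]_{\theta_I^d}\wedge[z]_{\theta_I^d}=\bot_{L/\theta_I^d}$ and that inclusion of ideals is the reverse of the order on $\Sigma$ — after which the statement follows in one line from the existence of complements.
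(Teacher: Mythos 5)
Your proof is correct and takes essentially the same route as the paper: both arguments pick $y$ representing the complement of $[x]_{\theta_I^d}$ (equivalently, $x\wedge y\in ker_Id$ and $x\vee y\in {\mathcal K}_I^d$ as supplied by Theorem \ref{boolean1}), note $y\in (x)_I^d$, and conclude $(y)_I^d=(y\vee z)_I^d=(y)_I^d\cap (z)_I^d$ for every $z\in (x)_I^d$. The only difference is cosmetic: where you invoke the abstract Boolean-algebra fact that $a\wedge b=\bot$ forces $b\le a'$, the paper unwinds that same fact into an explicit distributivity computation with representatives, $y\,\theta_I^d\,[y\vee(x\wedge z)]=[(x\vee y)\wedge(y\vee z)]\,\theta_I^d\,(y\vee z)$.
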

\begin{proof}
Let $L/\theta_I^d$ be a Boolean algebra and $x\in L$. By Theorem \ref{boolean1}, there exists $y\in L$ such that $x\wedge y\in ker_I d$ and $x\vee y\in {\mathcal K}_I^d$. Consider $z\in (x)_I^d$. Since $x\wedge z\in ker_Id$, applying Proposition \ref{greatest}, $(x\wedge y)\theta_I^d (x\wedge
z)$. Thus $y\theta_I^d [y\vee (x\wedge z)]= [(x\vee y)\wedge (y\vee z)]\theta_I^d (y\vee z)$. So $(y)_I^d=(y\vee z)_I^d=(y)_I^d\cap (z)_I^d$ and hence $(y)_I^d\subseteq (z)_I^d$, which deduces that $(z)_I^d\leq (y)_I^d$.
\end{proof}

\begin{theorem}\label{th 3}
Let $L$ be a $ker_Id$-atomic distributive lattice. The lattice $L/\theta_I^d$ is a Boolean algebra if and only if for each $x\in L$, there exists $y\in L$ such that $A_I^d(x)$ and $A_I^d(y)$ are a partition of $A_I^d(L)$ and $[y]_{\theta_I^d}$ is a complement of $[x]_{\theta_I^d}$ in $L/\theta_I^d$.
\end{theorem}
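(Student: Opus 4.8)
The plan is to reduce the statement to Theorem \ref{boolean1} (which says $L/\theta_I^d$ is Boolean iff for each $x$ there is $y\in(x)_I^d$ with $x\vee y\in\mathcal K_I^d$) together with the $ker_Id$-atomic translation of $\theta_I^d$ provided by Lemma \ref{lem 9}. So the two sides of the biconditional I must connect are: (a) the Theorem \ref{boolean1} condition, and (b) the condition ``$A_I^d(x)$ and $A_I^d(y)$ partition $A_I^d(L)$ and $[y]_{\theta_I^d}$ complements $[x]_{\theta_I^d}$.'' For the forward direction, assume $L/\theta_I^d$ is Boolean. Fix $x\in L$ and take $y\in(x)_I^d$ with $x\vee y\in\mathcal K_I^d$, as supplied by Theorem \ref{boolean1}. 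Since $y\in(x)_I^d$ means $x\wedge y\in ker_Id$, Lemma \ref{lem 9}(ii) gives $A_I^d(x)\cap A_I^d(y)=\emptyset$; and since $x\vee y\in\mathcal K_I^d$, Lemma \ref{lem 9}(i) together with the defining property $(x\vee y)_I^d=ker_Id$ forces $A_I^d(x\vee y)=A_I^d(L)$ (using Lemma \ref{lem 13}(ii): if some atom $a$ were not below $x\vee y$ then $a\in(x\vee y)_I^d=ker_Id$, impossible). As $A_I^d(x\vee y)=A_I^d(x)\cup A_I^d(y)$, disjointness plus this equality says exactly that $A_I^d(x),A_I^d(y)$ partition $A_I^d(L)$. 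Finally $[x]_{\theta_I^d}\wedge[y]_{\theta_I^d}=[x\wedge y]_{\theta_I^d}=ker_Id=\bot_{L/\theta_I^d}$ and $[x]_{\theta_I^d}\vee[y]_{\theta_I^d}=[x\vee y]_{\theta_I^d}=\mathcal K_I^d=\top_{L/\theta_I^d}$ by Proposition \ref{bounded}, so $[y]_{\theta_I^d}$ is a complement of $[x]_{\theta_I^d}$.

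For the converse, assume the displayed condition. Given $x\in L$, choose $y$ as in the hypothesis. The partition property gives $A_I^d(x)\cap A_I^d(y)=\emptyset$, so by Lemma \ref{lem 9}(ii) $x\wedge y\in ker_Id$, i.e.\ $y\in(x)_I^d$. The partition property also gives $A_I^d(x)\cup A_I^d(y)=A_I^d(L)$, hence $A_I^d(x\vee y)=A_I^d(L)$, and then Lemma \ref{lem 9}(iii) yields $x\vee y\in\mathcal K_I^d$. Thus the Theorem \ref{boolean1} condition holds for $x$, and since $x$ was arbitrary, $L/\theta_I^d$ is a Boolean algebra. (The ``complement'' clause in the hypothesis is then automatic and need not be used beyond bookkeeping, though one can remark that it is consistent with Corollary \ref{lem 8}.)

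The only genuinely delicate point is the identity $A_I^d(x\vee y)=A_I^d(x)\cup A_I^d(y)$, which should be recorded as an auxiliary observation before the main argument: a $ker_Id$-atom $a$ lies below $x\vee y$ iff $a=a\wedge(x\vee y)=(a\wedge x)\vee(a\wedge y)$; since $a\wedge x,a\wedge y\le a$ and each is either $a$ or lies in $ker_Id$ (by the atom property), and $a\notin ker_Id$, at least one of them must equal $a$, i.e.\ $a\le x$ or $a\le y$ — the reverse inclusion being trivial by isotonicity. I expect this distributivity step for atoms, and the careful use of Lemma \ref{lem 13}(ii) to pass from $(x\vee y)_I^d=ker_Id$ back to ``every atom is below $x\vee y$,'' to be the main obstacle; everything else is direct substitution into Theorem \ref{boolean1} and Lemma \ref{lem 9}.
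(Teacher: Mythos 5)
Your proof is correct, and your converse direction coincides with the paper's: disjointness of $A_I^d(x)$ and $A_I^d(y)$ gives $x\wedge y\in ker_Id$ via Lemma \ref{lem 9}(ii), the covering property plus Lemma \ref{lem 9}(iii) gives $x\vee y\in{\mathcal K}_I^d$, and Theorem \ref{boolean1} finishes. The forward direction, however, establishes the covering $A_I^d(x)\cup A_I^d(y)=A_I^d(L)$ by a genuinely different mechanism. The paper argues by contradiction using the Boolean structure of the quotient: if some $ker_Id$-atom $a$ lay outside $A_I^d(x)\cup A_I^d(y)$, then $[x\vee a]_{\theta_I^d}$ would be a second complement of $[y]_{\theta_I^d}$, distinct from $[x]_{\theta_I^d}$ because $a\in(x)_I^d$ while $a\notin(x\vee a)_I^d$, contradicting the uniqueness of complements (via Corollary \ref{lem 8}). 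You instead compute directly: since $(x\vee y)_I^d=ker_Id$, every $ker_Id$-atom must lie below $x\vee y$ (otherwise it would land in $ker_Id$), and by distributivity an atom $a\le x\vee y$ satisfies $a=(a\wedge x)\vee(a\wedge y)$, so since $ker_Id$ is an ideal and $a\notin ker_Id$, one of $a\wedge x$, $a\wedge y$ equals $a$. Your route is more elementary --- it uses neither the uniqueness of complements nor Corollary \ref{lem 8} --- and it isolates a purely lattice-theoretic fact: in a $ker_Id$-atomic lattice, $x\wedge y\in ker_Id$ and $x\vee y\in{\mathcal K}_I^d$ already force the partition property, independently of the Boolean hypothesis on the quotient. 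The paper's argument is shorter on the page but buys that brevity by leaning on the Boolean structure. Both are sound.
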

\begin{proof}
$(\Leftarrow)$ It is clear that $x\wedge y\in ker_Id$ and, by
Lemma \ref{lem 9}, $x\vee y\in {\mathcal K}_I^d$. Hence, Theorem
\ref{boolean1} completes the proof.

$(\Rightarrow)$ Consider $x\in L$. Since $L/\theta_I^d$ is a Boolean algebra, by Theorem \ref{boolean1}, there exists $y\in L$ such that $x\wedge y\in ker_Id$ and $x\vee y\in {\mathcal K}_I^d$. Clearly $A_I^d(x)\cap A_I^d(y)=\emptyset$. Let $a\in A_I^d(L)\setminus (A_I^d(x)\cup A_I^d(y))$. Using Lemma \ref{property filter}(i), $(x\vee a)\vee y\in {\mathcal K}_I^d$. Also $(x\vee a)\wedge y\in ker_Id$. So, by Corollary \ref{lem 8}, $[y]_{\theta_I^d}$ has two different complements $[x]_{\theta_I^d}$ and $[x\vee a]_{\theta_I^d}$, which is a contradiction, because $a\in (x)_I^d$ and $a\notin (x\vee a)_I^d$.
\end{proof}

 \begin{theorem}\label{th 2}
If $L/\theta_I^d$ is a Boolean algebra, then the congruence $\theta_I^d$ is the only congruence relation having $ker_I d$ as a whole class.
\end{theorem}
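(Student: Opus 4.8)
The plan is to combine Proposition \ref{greatest} with an argument that rules out any congruence strictly smaller than $\theta_I^d$ that still has $ker_I d$ as a whole class. By Proposition \ref{greatest} we already know $\theta_I^d$ is the \emph{greatest} such congruence, so it suffices to show it is also the \emph{least} one, i.e. that every congruence $\theta$ on $L$ having $ker_I d$ as a whole class satisfies $\theta_I^d\subseteq\theta$. Granting this together with Proposition \ref{greatest} gives $\theta=\theta_I^d$ for any such $\theta$, which is the claim.

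First I would fix a congruence $\theta$ on $L$ with $ker_I d$ a whole class, and take $x\,\theta_I^d\,y$; the goal is $x\,\theta\,y$. Since $L/\theta_I^d$ is a Boolean algebra, Theorem \ref{boolean1} and Corollary \ref{lem 8} provide, for the class $[x]_{\theta_I^d}$, a complement realised by some $z\in(x)_I^d$ with $x\wedge z\in ker_I d$ and $x\vee z\in{\mathcal K}_I^d$; because $x\,\theta_I^d\,y$ we have $(x)_I^d=(y)_I^d$, so the same $z$ also satisfies $y\wedge z\in ker_I d$ and $y\vee z\in{\mathcal K}_I^d$. Now I would exploit that $ker_I d$ and ${\mathcal K}_I^d$ are whole classes under $\theta$ (the first by hypothesis; for ${\mathcal K}_I^d$ one has to check it is a $\theta$-class — this should follow because $\top_{L/\theta_I^d}={\mathcal K}_I^d$ is the complement of $\bot_{L/\theta_I^d}=ker_I d$ in the Boolean algebra, and a congruence collapsing $ker_I d$ to a point must, via complementation in the quotient, also collapse ${\mathcal K}_I^d$; alternatively argue directly as in Proposition \ref{greatest}). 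Then $x\wedge z\,\theta\,y\wedge z$ (both lie in $ker_I d$) and $x\vee z\,\theta\,y\vee z$ (both lie in ${\mathcal K}_I^d$). Applying the congruence substitution property, from $x\wedge z\,\theta\,y\wedge z$ and $x\vee z\,\theta\,y\vee z$ one recovers $x\,\theta\,y$ by the standard distributive-lattice identity $x=(x\wedge z')\vee(x\wedge z)=\cdots$: more precisely, since in $L/\theta$ the elements $[x\wedge z],[x\vee z]$ determine $[x]$ whenever $[z]$ is fixed and a complement-like relation holds, one writes $x\,\theta\,(x\vee(y\wedge z))=(x\vee y)\wedge(x\vee z)$ and symmetrically $y\,\theta\,(y\vee(x\wedge z))=(x\vee y)\wedge(y\vee z)$, then uses $x\vee z\,\theta\,y\vee z$ to conclude $(x\vee y)\wedge(x\vee z)\,\theta\,(x\vee y)\wedge(y\vee z)$, hence $x\,\theta\,y$. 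This is essentially the computation already performed in the proof of Lemma \ref{max}, reused here with $\theta$ in place of $\theta_I^d$.

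The main obstacle is the step asserting that ${\mathcal K}_I^d$ is a single $\theta$-class, not merely that $ker_I d$ is: the hypothesis only gives us the bottom class. I expect this to be handled by the Boolean structure of $L/\theta_I^d$ — the top is the unique complement of the bottom, and any congruence that fuses $ker_I d$ as a whole class, when pushed through the quotient map, must respect complementation strongly enough to force the top class to be whole as well; if that is not immediate one redoes the three-case analysis of Proposition \ref{greatest} directly. Once ${\mathcal K}_I^d$ is known to be a whole $\theta$-class, the remainder is the routine distributive-lattice manipulation sketched above, so I would present that computation in full and keep the case analysis brief.
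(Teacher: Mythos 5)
Your overall strategy coincides with the paper's: use Proposition \ref{greatest} for the inclusion $\theta\subseteq\theta_I^d$, then, for the reverse inclusion, take $x\,\theta_I^d\,y$, pick $z$ with $[z]_{\theta_I^d}$ the complement of $[x]_{\theta_I^d}=[y]_{\theta_I^d}$, observe $x\wedge z,\ y\wedge z\in ker_Id$ and $x\vee z,\ y\vee z\in{\mathcal K}_I^d$, and recover $x\,\theta\,y$ from $(x\wedge z)\,\theta\,(y\wedge z)$ and $(x\vee z)\,\theta\,(y\vee z)$ via $x=x\vee(x\wedge z)\,\theta\,(x\vee y)\wedge(x\vee z)\,\theta\,(x\vee y)\wedge(y\vee z)\,\theta\,y$. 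The step you yourself single out as the obstacle --- that ${\mathcal K}_I^d$ (more precisely, the $\theta_I^d$-class containing $x\vee z$ and $y\vee z$) is a single $\theta$-class, when the hypothesis only makes $ker_Id$ a whole class --- is a genuine gap, and neither of your suggested repairs closes it. ``Complementation in the quotient forces the top class to be whole'' is not a valid inference: a congruence can separate points of ${\mathcal K}_I^d$ while still having $ker_Id$ as one of its classes. And redoing the three-case analysis of Proposition \ref{greatest} only reproves $\theta\subseteq\theta_I^d$; it gives no lower bound on $\theta$ and in particular cannot show that $\theta$ collapses ${\mathcal K}_I^d$.

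The gap cannot be filled, because the statement fails as written. Take $L$ the three-element chain $\bot<a<\top$, $I=\{\bot\}$, $d=id$. Then $ker_Id=\{\bot\}$ and $(a)_I^{id}=(\top)_I^{id}=\{\bot\}=ker_Id$, so ${\mathcal K}_I^{id}=\{a,\top\}$ and $L/\theta_I^{id}\cong{\bf 2}$ is a Boolean algebra; yet $\Delta$ is a lattice congruence having $ker_Id=\{\bot\}$ as a whole class with $\Delta\subsetneq\theta_I^{id}$, so $\theta_I^{id}$ is not the only such congruence. (The same example contradicts the corollary following the theorem.) What your computation actually establishes is the weaker statement in which one additionally assumes that ${\mathcal K}_I^d$ is also a whole $\theta$-class. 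For what it is worth, the paper's own proof has exactly the same defect: it asserts $(x\vee z)\,\theta\,(y\vee z)$ ``by a similar way,'' but nothing similar is available, since only the bottom class, not the top one, is hypothesized to be a $\theta$-class. You were right to flag this step; you should not have waved it through.
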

\begin{proof}
Let $\theta$ be a lattice congruence on $L$ such that $ker_I d$ is a whole class. By Proposition \ref{greatest}, $\theta\subseteq \theta_I^d$. For the converse, let $x\theta_I^d y$. Then there exists $z\in L$ such that $[x]_{\theta_I^d}^{-1}=[y]_{\theta_I^d}^{-1}=[z]_{\theta_I^d}$. By  Proposition \ref{bounded}, $[x\wedge z]_{\theta_I^d}=[x]_{\theta_I^d}\wedge [x]_{\theta_I^d}= \bot_{L/\theta_I^d}=ker_Id$. Thus $x\wedge z\in ker_Id$ and also $y\wedge z\in ker_Id$, which implies $(x\wedge z)\theta (y\wedge z)$. By a similar way, $(x\vee z)\theta (y\vee z)$. Now we have $x=x\vee (x\wedge z)\theta [x\vee (y\wedge z)]\theta [(x\vee y)\wedge (x\vee z)] \theta [(x\vee y)\wedge (y\vee z)]= [y\vee (x\wedge z)]\theta [y\vee (y\wedge z)]=y$. Thus $\theta_I^d\subseteq \theta$ and hence $\theta_I^d= \theta$.
\end{proof}

 \begin{corollary}
For a congruence $\theta$,
if $L/\theta_I^d$ and $L/\theta$ are Boolean algebras such that the congruence $\theta$ having $ker_I d$ as a whole class, then $\theta_I^d=\theta$.

 \end{corollary}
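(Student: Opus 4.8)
The plan is to read this off Theorem \ref{th 2} almost verbatim. The hypotheses of the corollary include that $L/\theta_I^d$ is a Boolean algebra, which is precisely the hypothesis of Theorem \ref{th 2}. That theorem concludes that $\theta_I^d$ is the \emph{only} lattice congruence on $L$ having $ker_I d$ as a whole class. Since $\theta$ is, by assumption, a congruence having $ker_I d$ as a whole class, the uniqueness statement forces $\theta = \theta_I^d$, and we are done.

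I would remark along the way that the assumption ``$L/\theta$ is a Boolean algebra'' plays no role in the argument: it is enough that $\theta$ be a congruence with $ker_I d$ as a whole class. So the corollary is genuinely weaker than Theorem \ref{th 2}, and the extra hypothesis is presumably kept only to phrase the statement symmetrically in $\theta_I^d$ and $\theta$.

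If one preferred a self-contained argument not quoting Theorem \ref{th 2}, I would reproduce its proof: first invoke Proposition \ref{greatest} to get $\theta \subseteq \theta_I^d$; then, given $x\,\theta_I^d\,y$, pick $z\in L$ with $[z]_{\theta_I^d} = [x]_{\theta_I^d}^{-1} = [y]_{\theta_I^d}^{-1}$ (using Corollary \ref{lem 8}), so that $x\wedge z, y\wedge z \in ker_I d$ and $x\vee z, y\vee z \in {\mathcal K}_I^d$; since $ker_I d$ and ${\mathcal K}_I^d$ are whole classes of $\theta$ as well, one gets $(x\wedge z)\,\theta\,(y\wedge z)$ and $(x\vee z)\,\theta\,(y\vee z)$, and then the standard distributive-lattice manipulation
\[
x = x\vee(x\wedge z)\ \theta\ x\vee(y\wedge z) = (x\vee y)\wedge(x\vee z)\ \theta\ (x\vee y)\wedge(y\vee z) = y\vee(x\wedge z)\ \theta\ y\vee(y\wedge z) = y
\]
yields $\theta_I^d \subseteq \theta$, hence equality. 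There is no real obstacle here; the only point requiring care in the self-contained version is checking that ${\mathcal K}_I^d$ is indeed a single $\theta$-class, which follows since it is a whole class of the coarser congruence $\theta_I^d$ by Proposition \ref{bounded}(ii) and $\theta \subseteq \theta_I^d$.
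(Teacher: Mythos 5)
Your proposal is correct and matches the paper's intent exactly: the corollary is stated immediately after Theorem \ref{th 2} with no separate proof, precisely because it follows verbatim from the uniqueness assertion of that theorem, as you say. Your observation that the hypothesis ``$L/\theta$ is a Boolean algebra'' is never used is also accurate, and your optional self-contained argument faithfully reproduces the paper's proof of Theorem \ref{th 2}.
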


\begin{corollary}
If $L$ is a distributive lattice with a least element $\bot$, $ker_Id=\{\bot\}$ and $L/\theta_I^d$ is a Boolean algebra, then $\theta_I^d=\Delta$.
\end{corollary}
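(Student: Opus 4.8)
The plan is to combine Theorem \ref{th 2} with the previously computed special case. First I would observe that $\Delta=\{(a,a)\mid a\in L\}$ is a lattice congruence on $L$ and ask what its whole classes are: a class of $\Delta$ is a singleton $\{a\}$, so $ker_Id=\{\bot\}$ is a whole class of $\Delta$ precisely because it is a singleton (here the hypothesis $ker_Id=\{\bot\}$ is doing the work). Thus $\Delta$ is a lattice congruence on $L$ having $ker_Id$ as a whole class.

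Next, since $L/\theta_I^d$ is assumed to be a Boolean algebra, Theorem \ref{th 2} applies and tells us that $\theta_I^d$ is the \emph{only} congruence relation having $ker_Id$ as a whole class. Having exhibited $\Delta$ as such a congruence in the previous step, uniqueness forces $\theta_I^d=\Delta$, which is exactly the claim. One should double-check the trivial directions: $\Delta\subseteq\theta_I^d$ is automatic since every congruence contains the diagonal; the content is the reverse inclusion, which is what Theorem \ref{th 2}'s uniqueness delivers once $\Delta$ is recognized as a qualifying congruence.

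I do not expect any real obstacle here — the corollary is essentially a one-line consequence of Theorem \ref{th 2}. The only point requiring a moment's care is verifying that $\{\bot\}$ genuinely is a whole $\Delta$-class, i.e. that no other element is $\Delta$-related to $\bot$; this is immediate from the definition of $\Delta$, but it is where the assumption $ker_Id=\{\bot\}$ (as opposed to merely $\bot\in ker_Id$) is essential. Alternatively, one could invoke the earlier lemma stating that if $L$ is a Boolean algebra with bottom $\bot$ then $\theta_\bot^{id}=\Delta$, but the route through Theorem \ref{th 2} is more direct and does not require $L$ itself to be Boolean, only the quotient.
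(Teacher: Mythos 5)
Your proposal is correct and is exactly the intended argument: the paper states this corollary without proof as an immediate consequence of Theorem \ref{th 2}, namely that $\Delta$ is a lattice congruence whose class of $\bot$ is the singleton $\{\bot\}=ker_Id$, so the uniqueness in Theorem \ref{th 2} forces $\theta_I^d=\Delta$. Your side remark that the hypothesis $ker_Id=\{\bot\}$ (and not merely $\bot\in ker_Id$) is what makes $\{\bot\}$ a whole $\Delta$-class is precisely the right point of care.
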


\newpage

$$ Conclusion$$
In this final section, for an ideal $I$, we conclude that
the lattice congruence $\theta_I^{id}$ is the smallest and so the best congruence in some classes of congruences as the following cases, because, if $L/\theta_I^{id}$ is a Boolean algebra, then $L/\theta_I^{id}$ has the maximum cardinality in the set of all $L/\theta_I^{d}$.

 {\rm (i)} Consider an ideal $I$ and a derivation $d$ on $L$. By Proposition \ref{prop1}, $\theta^{id}_I\subseteq \theta^d_I$. Thus the map $\pi:L/\theta_I^{id}\rightarrow L/\theta_I^d$ defined by $\pi([a]_{\theta_I^{id}})=[a]_{\theta_I^d}$ is a lattice homomorphism. Using First Isomorphism Theorem, if $L/\theta_I^{id}$ is a Boolean algebra, then so is $L/\theta_I^d$. Thus the lattice congruence $\theta_I^{id}$ is the best congruence in the set $\{\theta_I^{d}\mid$ d~is~a~derivation$\}$.

{\rm (ii)} Combining Theorem \ref{th 2} and Proposition \ref{prop1}, it is concluded that $\theta_I^{id}$ is the smallest congruence in the set of all congruences having $ker_Id$ as a whole class.

 {\rm (iii)} Using Lemma \ref{I subset J}, $\theta_I^{id}$ is the smallest congruence in the set $\{\theta_J^d\}$ in which there exists a derivation $d$ on $L$ such that $ker_Id=J$.

{\rm (iv)} Using Lemma \ref{lem 3}, $\theta_I^{id}$ is the smallest congruence in the set $\{\theta_J^d\}$ in which $J=(a)_I^d$, for all $a\in L$.

 {\rm (v)} Using Theorem \ref{th 2}, $\theta_I^{id}$ is the smallest congruence in the set of all congruences having $I$ as a whole class.

 {\rm (vi)} In the case where $L$ is a $ker_Id$-atomic distributive lattice such that for each $x\in L$, there exists $y\in L$ such that $A_I^d(x)$ and $A_I^d(y)$ are a partition of $A_I^d(L)$, then $\theta_I^{id}$ is the smallest congruence in which $L/\theta_I^{d}$ is a Boolean algebra.

 There is still an open question concerning $\theta_I^{d}$:

 Is there a necessary and sufficient condition on an ideal $I$ such that $\theta_I^{d}$ is the smallest congruence in which $L/\theta_I^{d}$ is a Boolean algebra at all.

%\newpage

 \end{document}